\DeclareMathAlphabet{\mathscr}{OT1}{pzc}{m}{it} 
\numberwithin{equation}{section}
\newtheorem{theorem}{Theorem}[section]
\newtheorem{notation}[theorem]{Notation}
\newtheorem{lemma}[theorem]{Lemma}
\newtheorem{proposition}[theorem]{Proposition}
\newtheorem{corollary}[theorem]{Corollary}
\newtheorem{definition}[theorem]{Definition}
\newtheorem{hypothesis}[theorem]{Hypothesis}
\newtheorem{remark}[theorem]{Remark}
\newenvironment{prooff}[1]{\begin{trivlist}
\item {\it \bf Proof}\quad} {\qed\end{trivlist}}
\newsavebox\myboxA
\newsavebox\myboxB
\newlength\mylenA
\newcommand*\xoverline[2][0.75]{%
    \sbox{\myboxA}{$\m@th#2$}%
    \setbox\myboxB\null
    \ht\myboxB=\ht\myboxA%
    \dp\myboxB=\dp\myboxA%
    \wd\myboxB=#1\wd\myboxA
    \sbox\myboxB{$\m@th\overline{\copy\myboxB}$}
    \setlength\mylenA{\the\wd\myboxA}
    \addtolength\mylenA{-\the\wd\myboxB}%
    \ifdim\wd\myboxB<\wd\myboxA%
       \rlap{\hskip 0.5\mylenA\usebox\myboxB}{\usebox\myboxA}%
    \else
        \hskip -0.5\mylenA\rlap{\usebox\myboxA}{\hskip 0.5\mylenA\usebox\myboxB}%
    \fi}
\title{G\^ateaux type path-dependent PDEs and
 BSDEs with Gaussian forward processes}
\author{
Adrien BARRASSO \thanks{ENSTA Paris and Ecole Polytechnique,
 Institut Polytechnique de Paris 828.
E-mail: \sf adrien.barrasso@polytechnique.edu \\ }
\qquad\quad
Francesco RUSSO\thanks{ENSTA Paris, Institut Polytechnique
de Paris, Unit\'e de Math\'ematiques appliqu\'ees, 828, boulevard des Mar\'echaux, F-91120 Palaiseau, France. E-mail:
 \sf francesco.russo@ensta-paris.fr}}
\date{June 26th 2019}
\begin{document}
\maketitle
{\bf Abstract.}
We are interested in path-dependent semilinear PDEs, where the derivatives are of G\^ateaux type in specific directions $k$ and $b$, being the kernel
 functions of
 a Volterra Gaussian process $X$. Under some conditions on $k, b$ and the coefficients of 
the PDE, we prove existence and uniqueness of a decoupled mild solution, 
a notion  introduced in a previous paper by the authors.
We also show that the solution of the PDE can
 be represented through BSDEs where the forward (underlying) process is $X$.

\bigskip
{\bf MSC 2010} Classification. 
60G15; 60H30; 35S05; 60J35; 45D05 

\bigskip
{\bf KEY WORDS AND PHRASES.} 
Gaussian processes; Volterra processes; path-dependent PDEs; decoupled mild solutions; BSDEs.

\section{Introduction}

Backward SDEs (in short BSDEs) are naturally linked to non-linear deterministic evolution equations. In one of their  pioneering work \cite{pardoux1992backward},  Pardoux and Peng showed that Markovian BSDEs  for which the randomness comes from an underlying which is the solution of a classical SDE, are linked to classical semilinear PDEs.
In this framework an impressive amount of papers has been produced.

In the recent times, particular attention was devoted to the
case when the driver and terminal condition of the BSDE depend on the whole path of the forward underlying
 process
 which can be a Brownian motion.
 Those are of type
\begin{equation} \label{C7EBSDE}
Y^{s,\eta}=\xi\left((B^{s,\eta}_t)_{t\in[0,T]}\right)+\int_{\cdot}^Tf\left(r,(B^{s,\eta}_t)_{t\in[0,r]},Y^{s,\eta}_r,Z^{s,\eta}_r\right)dr-\int_{\cdot}^TZ^{s,\eta}_rdB^{s,\eta}_r,
\end{equation}
 where $B$ is a Brownian motion and for any $s\in[0,T]$, $\eta\in\mathbbm{D}([0,T],\mathbbm{R}^d)$, $B^{s,\eta}=\eta(\cdot\wedge s)+(B_{\cdot\vee s}-B_s)$.
If in the Markovian case those were related to usual 
PDEs, in the present path-dependent framework,
those were linked to the so called path-dependent PDEs (see for instance \cite{Peng2016,etzI}) of the form
\begin{equation}\label{C7PDEparabolique}
\left\{
\begin{array}{l}
D \Phi + \frac{1}{2}Tr(\nabla^2 \Phi) + f(\cdot,\cdot,\Phi,\nabla \Phi)=0\quad \text{ on } [0,T]\times\mathbbm{D}([0,T],\mathbbm{R}^d) \\
\Phi_T = \xi.
\end{array}\right.
\end{equation}
There, $D$ (resp. $\nabla$) is the horizontal (resp. vertical) derivative introduced in \cite{dupire}.
For \eqref{C7PDEparabolique} the authors discussed classical or viscosity (probabilistic) type solution.
Variants of it, replacing the Brownian motion with the solution of path-dependent SDEs were
considered for instance by \cite{cosso_russo15a, cosso_russo15b} with a different formalism. 
\cite{cosso_russo15b} for instance introduced the notion of
stong-viscosity solution
(based on approximation techniques), which 
constitutes a purely analytic concept. 

Indeed such path-dependent PDEs have been investigated 
by several methods. For instance strict (classical, regular) solutions 
have been 
studied in \cite{DGR, flandoli_zanco13, cosso_russo15a}
under the point of view of Banach space valued stochastic processes.
Another interesting approach (probabilistic) but still based 
on approximation (discretizations) was given by
\cite{leao_ohashi_simas14}.
More recently, \cite{BionNadal} produced 
a viscosity solution to a more general path-dependent (possibly integro)-PDE
 through  dynamic risk measures. 
In all those cases
the solution $\Phi$ of \eqref{C7PDEparabolique} was associated 
to the component $Y^{s,\eta}$ of the solution couple
$(Y^{s,\eta}, Z^{s,\eta})$ of \eqref{C7EBSDE} with initial time $s$ and
initial condition $\eta$.
A challenging link to be explored was 
the link between $Z^{s,\eta}$ and the solution of the path-dependent PDE $\Phi$.
 For instance in the case of Fr\'echet $C^{0,1}$  solutions $\Phi$ 
defined on $C([0,T])$,
 then
 $Z^{s,\eta}$ is equal to the {\it ''vertical''} derivative
$\nabla \Phi$, see for instance \cite{masiero}. 


An important step forward concerning path-dependent PDEs associated with BSDEs involving a solution of a path-dependent SDEs including the possibility of jumps and coefficients which were not necessarily continuous
was done in \cite{paperPathDep}. 
The concept of solution was there the {\it decoupled mild solution} which is based
on semigroup type techniques. 
That notion, is competitive with the notion of viscosity solution, especially when such viscosity solutions do not necessarily exist. 
Moreover, that notion of solution
also provides a solution to the so called
{\it identification problem}, meaning that it links the second component 
$Z$ of the BSDE, to the PDE.

The natural question raised by this paper is the following.
What about the case when the Brownian motion $B$ is replaced with
 a  (non-Markovian, non-semimartingale) process  such as
fractional Brownian motion? The idea is to extend the consideration 
of  \cite{paperPathDep} to this framework.
The basic reference paper for this work is
\cite{ViensZh}, that considered for the first time a BSDE which forward process was the solution of a Volterra SDE. This includes the kind of Gaussian processes which we consider. They related this BSDE to a G\^ateaux type PDE close to \eqref{C7IntroPDE} by showing that if the PDE admits a classical solution, that solution provides a solution of the BSDE.
 Our work provides the converse implication. We start from 
the well-posedness of a class of BSDEs, and show that they produce, under very mild regularity assumptions on the coefficients, 
a {\it decoupled mild} solution to the path-dependent PDE.

Let $(\Omega,\mathcal{F})$ be the canonical space  where $\Omega$ is
the set $\mathcal{C}_0([0,T],\mathbbm{R}^d), d \ge 1,$
 of $\mathbbm{R}^d$-valued continuous functions on $[0,T]$ vanishing at $0$, equipped with its uniform norm and $\mathcal{F}$ its Borel $\sigma$-field. 
 

We fix $b:[0,T]\times[0,T]\rightarrow \mathbbm{R}^d$ and $k:[0,T]\times[0,T]\rightarrow \mathcal{M}_d(\mathbbm{R})$ some two parameters  functions such that for all $t\in[0,T]$, $b(\cdot,t),k(\cdot,t)$ vanish on $[0,t[$ and are continuous, admitting a right-derivative on $[t,T]$.

On the canonical space, we consider a Gaussian measure $\mathbbm{Q}$ under which there exists a $d$-dimensional Brownian motion $B$ such that the canonical process $X$ admits the representation
\begin{equation}
X_t=\int_0^tb(t,r)dr+\int_0^tk(t,r)dB_r,\quad t\in[0,T].
\end{equation} 
For every ''initial time and path'' $(s,\eta)$ we introduce the law 
$\mathbbm{Q}^{s,\eta}$ of $X$ conditioned by the fact that,
 on $[0,s]$,  $X$ coincides  with the path $\eta$.
 For every $(s,\eta)$, $\mathbbm{Q}^{s,\eta}$ is a Gaussian measure 
 of mean function $m_s[\eta]$, where $m_s$ is a continuous linear operator on $\Omega$. The reader can refer to 
\cite{bogachevGauss} concerning Gaussian measures and related notions,
see also Definition \ref{DefBasic}.

 We will  show that $(\mathbbm{Q}^{s,\eta})_{(s,\eta)\in[0,T]\times \Omega}$ defines what we call a \textbf{path-dependent canonical class},
see Definition \ref{DefCondSyst}, notion which was introduced by the authors in \cite{paperMPv2}.
This concept extends the well-known historical notion of  Markov canonical class to the path-dependent (therefore non-Markovian) setting.

Given this set of probability measures,
under every $\mathbbm{Q}^{s,\eta}$, we consider the BSDE (indexed by $(s,\eta)$)
\begin{equation} \label{C7IntroBSDE}
Y^{s,\eta}_{\cdot}=\xi(X)+\int_{\cdot}^{T}f\left(r,X,Y^{s,\eta}_r,\frac{d\langle M^{s,\eta},m^{T,s,\eta}\rangle_r}{dr}\right)dr-(M^{s,\eta}_T-M^{s,\eta}_{\cdot}),
\end{equation} 
where $m^{T,s,\eta}: t\longmapsto \mathbbm{E}^{s,\eta}[X_T|\mathcal{F}_t]$ is the  \textit{driving martingale} of the BSDE.
In the case when $k(t,\cdot)\equiv\mathds{1}_{[0,t]}$  and $b\equiv 0$ then this driving martingale $m^{T,s,\eta}$ is  $\mathbbm{P}^{s,\eta}$-a.s. equal to $X$  and is the conditioned Brownian motion $B^{s,\eta}$ appearing in \eqref{C7EBSDE}. This case was already considered in a more general framework, in  \cite{paperPathDep}.
The main aim of this paper is to study the path-dependent PDE which replaces  \eqref{C7PDEparabolique} when one 
considers the previous BSDE \eqref{C7IntroBSDE} instead of \eqref{C7EBSDE}.

Thanks to the theory which we have developed in \cite{paperMPv2}, we can associate to the family of probability measures $(\mathbbm{Q}^{s,\eta})_{(s,\eta)\in[0,T]\times \Omega}$
 what we call a \textbf{path-dependent system of projectors} $(P_s)_{s\in[0,T]}$, a notion which  replaces the one of Markovian semigroup.
We define the  linear operator $
\tilde A$, acting on a domain $\mathcal{D}(\tilde A)$ of functions $\tilde\Phi$ defined on $\mathbbm{D}([0,T],\mathbbm{R}^d))$  by
 \begin{equation}\label{IntroDefA}
   \tilde A(\tilde\Phi)_t:= D\tilde\Phi_{t}+\nabla_{b(\cdot,t)}\tilde\Phi_{t} +\frac{1}{2}\underset{i\leq d}{\sum}\nabla_{k_i(\cdot,t)}^2\tilde\Phi_{t},
\quad t\in[0,T].
 \end{equation} 

In \eqref{IntroDefA}, $\nabla_h$ (resp. $\nabla^2_{h,\ell}$) is the
 first (resp. second) order G\^ateaux type derivatives
in the direction $h$ (resp. $h, \ell$)
 and $D$ is a time derivative. Those operators act on  functionals defined on a set of cadlag functions. 
 Again when $k(t,\cdot)\equiv\mathds{1}_{[0,t]}$  and $b\equiv 0$ and if $\Phi_t(\omega) = \tilde{\Phi}_t(\omega^t)$, then $\nabla_{k(\cdot,t)}\tilde{\Phi}_t(\omega^t)=\nabla\Phi_t(\omega)$, where $\nabla$ is now the vertical derivative introduced in \cite{dupire}.

We introduce the {\it mean random field} $m = (m_s)_{s\in [0,T]}$,
where $m_s[\eta](t)$ is the conditional expectation of $X_t$
knowing that $X$ coincides with  $\eta$ on $[0,s]$, see Proposition 
\ref{CondExp}. 
In particular $m^T:(s,\eta)\longmapsto m_s[\eta](T)=\mathbbm{E}^{s,\eta}[X_T]$
 is called  \textbf{prediction martingale} in the literature, see for instance Remark 3.2 in \cite{Sottinen}.

 Then we introduce the operator $A$ on a certain domain $\mathcal{D}(A)$ which to each $\Phi = \tilde \Phi \circ m$
associates $(\tilde A (\tilde \Phi))\, \circ\, m$.
  We also introduce in Definition \ref{C7NotGamma}, the bilinear operator $\Gamma$ which to any $\Phi,\Psi\in\mathcal{D}(A)$ maps $A(\Phi\Psi)-\Phi A(\Psi)-\Psi A(\Phi)$. This operator was already introduced in another context in \cite{paperPathDep} and extends the \textit{carré du champ} operator appearing in the Markov processes literature, see \cite{dellmeyerD} for instance.

We show in Proposition \ref{PropWeakGen} that $A$
is a weak generator of $(P_s)_{s\in[0,T]}$, see Definition \ref{WeakGen}.
That operator $A$ is therefore linked to the probability measures $(\mathbbm{Q}^{s,\eta})_{(s,\eta)\in[0,T]\times \Omega}$ mentioned above, and this will lead us to show that the BSDE \eqref{C7IntroBSDE} permits to solve the following semilinear path-dependent PDE which we denote $PDPDE(f,\xi)$:

 \begin{equation}\label{C7IntroPDE}
\left\{
 \begin{array}{l}
 A  (\Phi) + f(\cdot,\cdot,\Phi,\Gamma(m^T,\Phi))= 0\text{ on }[0,T]\times \Omega \\
\Phi_{T} = \xi, \text{ on }[0,T]\times \Omega .
 \end{array}\right.
 \end{equation}

A process $Y$ will be called a \textbf{decoupled mild solution of} $PDPDE(f,\xi)$ if there exists an $\mathbbm{R}^d$-valued auxiliary  process $Z$ such that for all $(s,\eta)\in[0,T]\times\Omega$
  \begin{equation}\label{C7IntroAbMildEq}
		 \left\{
		 \begin{array}{rl}
		 Y_s(\eta)&=P_s[\xi](\eta)+\int_s^TP_s\left[f\left(r,\cdot,Y_r,Z_r\right)\right](\eta)dr\\
		 (Ym^T)_s(\eta) &=P_s[\xi X_T](\eta) -\int_s^TP_s\left[\left(Z_r-m^T_rf\left(r,\cdot,Y_r,Z_r\right)\right)\right](\eta)dr.
		 \end{array}\right.
		 \end{equation}
We emphasize that decoupled mild solutions were introduced  in the  framework of classical parabolic PDEs in \cite{paper3}, and in the path-dependent framework in \cite{paperPathDep}. Those extend the notion of {\it classical solution} i.e. a functional $\Phi$ in the domain $\mathcal {D}(A)$ fulfilling \eqref{C7IntroPDE}.

The main result of this paper is Theorem \ref{MainTheorem} which shows that when $\xi$ is measurable with polynomial growth and $f$ is measurable with polynomial growth in $\omega$ and uniformly Lipschitz in the last two variables,  then $PDPDE(f,\xi)$ admits a unique decoupled mild solution $Y$.

As anticipated, another feature of the paper is that the solution admits a probabilistic representation. Indeed, the unique decoupled mild solution of $PDPDE(f,\xi)$ is given by 
\begin{equation}
Y:(s,\eta)\longmapsto Y^{s,\eta}_s,
\end{equation}
where $Y^{s,\eta}$ is the solution of BSDE \eqref{C7IntroBSDE}.

When $b\equiv 0$ and $k(t,\cdot)\equiv \mathds{1}_{[0,t]}$ for all $t$, then for every $(s,\eta)$, $\mathbbm{Q}^{s,\eta}$ is the law of the "conditioned" Brownian motion $B^{s,\eta}$ introduced after \eqref{C7EBSDE}. In this case, our BSDE  \eqref{C7IntroBSDE} is simply \eqref{C7EBSDE} and \eqref{C7IntroPDE} becomes \eqref{C7PDEparabolique}. Existence and uniqueness of a decoupled mild solution in this case 
was already shown in our previous paper \cite{paperPathDep}.
That paper includes the case of (semimartingale-)solutions to path-dependent SDEs with jumps;
in that case the driving martingale of the BSDE is the martingale component of
the semimartingale $X$.


The paper is organized as follows. Section \ref{S1} recalls some notions and results related to path-dependent canonical classes and systems of projectors, which were introduced by the authors in \cite{paperMPv2}. Section \ref{SGauss} is mainly  devoted to showing that, under some conditions, a Gaussian measure 
 induces a path-dependent canonical class when considering its regular conditional probability distributions, see Proposition \ref{ClassGauss} for the centered case and Proposition \ref{ProbaDrift} for the case with a drift.
Section \ref{S4} is the main section of the paper. It introduces the path-dependent PDE for which we will show well-posedness, and the associated BSDE.
First, Section \ref{S41} introduces the assumptions on $k,b$, 
see Hypothesis \ref{Hypbk}. Then in Section \ref{S42} we define the linear operators $\tilde{A}$ and $A$ appearing in \eqref{IntroDefA} and \eqref{C7IntroPDE} with the corresponding domains, see Definition \ref{DefDA}. 
 Theorem \ref{ThViens} provides an  
  It\^o formula for elements of $\mathcal{D}(A)$. In Section \ref{S43} we introduce the driving martingale of the BSDE (see Notation \ref{mT}) and study its properties, see Proposition \ref{mTprop}.
Finally, in Section \ref{S44}, we consider the path-dependent PDE \eqref{C7AbstractEq} and show in Theorem \ref{MainTheorem} that, under Hypothesis \ref{C7HypBSDE}, it admits a unique decoupled mild solution
 and a probabilistic representation through the BSDE \eqref{C7BSDE}. Proposition \ref{C7classical} shows that any classical solution of \eqref{C7AbstractEq} is also a decoupled mild solution, and conversely that if the unique decoupled mild solution belongs to $\mathcal{D}(A)$ then it is quasi surely (see Definition \ref{C7zeropotential}) a classical solution.

\section{Preliminaries, path-dependent canonical classes and systems of projectors}\label{S1}

In this paper, we will make use of 
notions and  results concerning path-dependent canonical classes, which were introduced in Section 3 of\cite{paperMPv2}. We give here the main definitions and results related to that concept.

We start by fixing some basic vocabulary and notations.
\begin{notation}

	A topological space $E$ will always be considered as a measurable space 
	equipped with its Borel $\sigma$-field which shall be denoted $\mathcal{B}(E)$.
	
	Let $(\Omega,\mathcal{F})$, $(E,\mathcal{E})$ be two measurable spaces. A measurable mapping from $(\Omega,\mathcal{F})$ to $(E,\mathcal{E})$ shall often be called a \textbf{random variable} (with values in $E$), or in short r.v.

	Given a measurable space $\left(\Omega,\mathcal{F}\right)$, for any $p \ge 1$, the set of real valued random variables with finite $p$-th moment under probability $\mathbbm{P}$ will be denoted $\mathcal{L}^p(\mathbbm{P})$ or $\mathcal{L}^p$ if there can be no ambiguity concerning the underlying probability.
	
	Given a stochastic basis, for any cadlag locally square integrable martingales $M,N$, we denote  $\langle M,N\rangle$ (or simply $\langle M\rangle$ if $M=N$)  their (predictable) \textbf{angular bracket}.
\end{notation}

\begin{notation}\label{canonicalspace}
	We fix $T\in\mathbbm{R}_+^*$ and $d\in\mathbbm{N}^*$.
	$\Omega:=\mathcal{C}_0([0,T],\mathbbm{R}^d)$  will denote the   space of continuous functions from $[0,T]$ to $\mathbbm{R}^d$ vanishing at $0$.
	
	For every $t\in[0,T]$ we denote the coordinate mapping $X_t:\omega\mapsto\omega(t)$ and we define on $\Omega$ the $\sigma$-field  $\mathcal{F}:=\sigma(X_r|r\in[0,T])$. The coordinates of $X$ are denoted $X^1,\cdots,X^d$.
	On the measurable space $(\Omega,\mathcal{F})$, we introduce the \textbf{initial filtration}  $\mathbbm{F}^o:=(\mathcal{F}^o_t)_{t\in[0,T]}$, where $\mathcal{F}^o_t:=\sigma(X_r|r\in[0,t])$, and the (right-continuous) \textbf{canonical filtration} $\mathbbm{F}:=(\mathcal{F}_t)_{t\in[0,T]}$,
	where $\mathcal{F}_t:=\underset{s\in]t,T]}{\bigcap}\mathcal{F}^o_s$ if $t<T$ and $\mathcal{F}_T:=\mathcal{F}^o_T=\mathcal{F}$. 
	$\left(\Omega,\mathcal{F},\mathbbm{F}\right)$ will be called the \textbf{canonical space}, and $X$ the \textbf{canonical process}.
	On $[0,T]\times\Omega$, we will denote by $\mathcal{P}ro^o$ (resp. $\mathcal{P}re^o$) the  $\mathbbm{F}^o$-progressive (resp. $\mathbbm{F}^o$-predictable)  $\sigma$-field.
	$\Omega$ will be equipped with the sup norm $\|\cdot\|_{\infty}$ which makes it a Banach space, and for which the Borel $\sigma$-field is $\mathcal{F}$.
	
	$\mathcal{P}(\Omega)$ will denote the set of probability measures on $\Omega$ and will be equipped with the topology of weak convergence of measures which also makes it a Polish space being $\Omega$ itself Polish, see Theorems 1.7 and 3.1 in Chapter 3 of \cite{EthierKurz}. It will also be equipped with the associated Borel $\sigma$-field.
\end{notation}

\begin{notation}\label{Stopped}
	For any $\omega\in\Omega$ and $t\in[0,T]$, the path $\omega$ stopped at time $t$ $r\mapsto \omega(r\wedge t)$ will be denoted $\omega^t$.
\end{notation}

\begin{definition}\label{DefCondSyst}
	A \textbf{path-dependent canonical class} will be a set of probability measures $(\mathbbm{P}^{s,\eta})_{(s,\eta)\in[0,T]\times \Omega}$ defined on the canonical space $(\Omega,\mathcal{F})$. It will verify the three following items.
	\begin{enumerate}
		\item For every  $(s,\eta)\in[0,T]\times \Omega$, $\mathbbm{P}^{s,\eta}( \omega^s=\eta^s)=1$;
		\item for every $s\in[0,T]$ and $F\in\mathcal{F}$, the mapping
		\\
		$\begin{array}{ccl}
		\eta&\longmapsto& \mathbbm{P}^{s,\eta}(F)\\
		\Omega&\longrightarrow&[0,1]
		\end{array}$ is $\mathcal{F}^o_s$-measurable;
		\item for every  $(s,\eta)\in[0,T]\times \Omega$, $t\geq s$  and $F\in\mathcal{F}$,
		\begin{equation} \label{DE13}
		\mathbbm{P}^{s,\eta}(F|\mathcal{F}^o_t)(\omega)=\mathbbm{P}^{t,\omega}(F)\text{ for }\mathbbm{P}^{s,\eta}\text{ almost all }\omega.
		\end{equation}
	\end{enumerate} 
	This implies in particular that for every  $(s,\eta)\in[0,T]\times \Omega$ and $t\geq s$, then $(\mathbbm{P}^{t,\omega})_{\omega\in\Omega}$ is a regular 
	conditional expectation of $\mathbbm{P}^{s,\eta}$ by $\mathcal{F}^o_t$, see the definition above Theorem 1.1.6 in \cite{stroock} for instance.
	\\
	\\
	A path-dependent canonical class $(\mathbbm{P}^{s,\eta})_{(s,\eta)\in[0,T]\times \Omega}$ will be said to be \textbf{progressive} if for every $F\in\mathcal{F}$, the mapping 
	$(t,\omega)\longmapsto \mathbbm{P}^{t,\omega}(F)$ is $\mathbbm{F}^o$-progressively measurable.
\end{definition}
\begin{remark}\label{C7Borel}
	Given a path-dependent canonical class,
	one can easily show  by approximation through simple functions the
	following. Let  $Z$ be any random variable.
	\begin{itemize}	
		\item For any $s\in[0,T]$ 
then $\eta \longmapsto \mathbbm{E}^{s,\eta}[Z]$ is $\mathcal{F}^o_s$-measurable and for every  $(s,\eta)\in[0,T]\times \Omega$, $t\geq s$,
		$\mathbbm{E}^{s,\eta}(Z|\mathcal{F}^o_t)(\omega)=\mathbbm{E}^{t,\omega}(Z)\text{ for }\mathbbm{P}^{s,\eta}\text{ almost all }\omega$, provided
		previous expectations  are finite;
		\item 	if the  path-dependent canonical class is progressive,
		$(t,\omega)\longmapsto
		\mathbbm{E}^{t,\omega}[Z]$ is $\mathbbm{F}^o$-progressively measurable, provided
		previous expectations are finite.
	\end{itemize}
\end{remark}

Very often path-dependent canonical classes will verify the following important hypothesis which is a reinforcement of \eqref{DE13}.
\begin{hypothesis}\label{HypClass}
	For every  $(s,\eta)\in[0,T]\times \Omega$, $t\geq s$  and $F\in\mathcal{F}$,
	\begin{equation} \label{DE14}
	\mathbbm{P}^{s,\eta}(F|\mathcal{F}_t)(\omega)=\mathbbm{P}^{t,\omega}(F)\text{ for }\mathbbm{P}^{s,\eta}\text{ almost all }\omega.
	\end{equation}
\end{hypothesis}

\begin{notation} \label{N27}
	$\mathcal{B}_b(\Omega)$ stands for 
	the set of real bounded measurable functions on $\Omega$.
	Let $s\in[0,T]$, $\mathcal{B}^s_b(\Omega)$  will denote the set of real bounded $\mathcal{F}^o_s$-measurable functions on $\Omega$.
	We also denote by $\mathcal{B}^+_b(\Omega)$ the subset of r.v. $\phi\in\mathcal{B}_b(\Omega)$ such that $\phi(\omega)\geq 0$ for all $\omega\in\Omega$.
\end{notation}
\begin{definition}\label{DefCondOp}
	\begin{enumerate}\
		\item A linear map $Q: \mathcal{B}_b(\Omega) \rightarrow \mathcal{B}_b(\Omega)$
		is said {\bf positivity preserving monotone}  
		if for every $\phi\in\mathcal{B}^+_b(\Omega)$ then 
		$Q[\phi]\in\mathcal{B}^+_b(\Omega)$ and 
		for every increasing converging (in the pointwise sense) sequence 
		$f_n\underset{n}{\longrightarrow}f$ then   $Q[f_n]\underset{n}{\longrightarrow} Q[f]$  pointwise.
		\item 	A family $(P_s)_{s\in[0,T]}$ of positivity preserving monotone linear operators on $\mathcal{B}_b(\Omega)$ 
		will be called a \textbf{path-dependent system of projectors}  if it verifies the three following items.
		
		\begin{itemize}
			\item For all $s\in[0,T]$, the restriction of $P_s$ on $\mathcal{B}^s_b(\Omega)$
			coincides with the identity;
			\item for all $s\in[0,T]$, $P_s$ maps $\mathcal{B}_b(\Omega)$ 
			into $\mathcal{B}^s_b(\Omega)$;
			\item for all $s,t\in[0,T]$ with $t\geq s$, $P_s\circ P_t=P_s$.
		\end{itemize}
	\end{enumerate}
\end{definition}

The proposition below states a correspondence between path-dependent canonical classes and path-dependent systems of projectors. It was the object of
Corollary 3.1 of  \cite{paperMPv2}.
\begin{proposition}\label{EqProbaOp}
	The mapping 
	\begin{equation}
	(\mathbbm{P}^{s,\eta})_{(s,\eta)\in[0,T]\times \Omega}\longmapsto\left(\begin{array}{rcl}Z&\longmapsto& (\eta\mapsto\mathbbm{E}^{s,\eta}[Z])\\ \mathcal{B}_b(\Omega)&\longrightarrow& \mathcal{B}_b(\Omega)\end{array}\right)_{s\in[0,T]},
	\end{equation}
	is a bijection between the set of path-dependent system of probability measures and the set of path-dependent system of projectors.
\end{proposition}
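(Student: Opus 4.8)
Write $\Theta$ for the displayed map, sending a path-dependent canonical class $(\mathbbm{P}^{s,\eta})_{(s,\eta)}$ to the family $(P_s)_s$ defined by $P_s[Z](\eta):=\mathbbm{E}^{s,\eta}[Z]$. I would prove separately that $\Theta$ (i) lands in the set of path-dependent systems of projectors, (ii) is injective, (iii) is surjective; the candidate inverse is $(P_s)_s\mapsto(\mathbbm{P}^{s,\eta})_{(s,\eta)}$ with $\mathbbm{P}^{s,\eta}(F):=P_s[\mathds{1}_F](\eta)$, and most of the work will be in checking that this candidate really produces a path-dependent canonical class.

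\textbf{Well-definedness of $\Theta$.} For a fixed path-dependent canonical class, $P_s$ is linear with $\|P_s[Z]\|_\infty\le\|Z\|_\infty$, non-negativity preservation is positivity of the expectation, and the pointwise monotone continuity along increasing sequences is monotone convergence applied for each fixed $\eta$. The three structural requirements of a path-dependent system of projectors match Definition \ref{DefCondSyst} almost verbatim: the $\mathcal{F}^o_s$-measurability of $\eta\mapsto\mathbbm{E}^{s,\eta}[Z]$, hence $P_s:\mathcal{B}_b(\Omega)\to\mathcal{B}^s_b(\Omega)$, comes from item 2 via Remark \ref{C7Borel}; $P_s$ being the identity on $\mathcal{B}^s_b(\Omega)$ comes from item 1, since an $\mathcal{F}^o_s$-measurable $Z$ satisfies $Z(\omega)=Z(\eta)$ on the $\mathbbm{P}^{s,\eta}$-full event $\{\omega^s=\eta^s\}$; and $P_s\circ P_t=P_s$ for $t\ge s$ is the tower property, because Remark \ref{C7Borel} identifies $\omega\mapsto P_t[Z](\omega)$ with a version of $\mathbbm{E}^{s,\eta}[Z\mid\mathcal{F}^o_t]$.

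\textbf{Injectivity and surjectivity.} Injectivity is immediate since $\mathbbm{P}^{s,\eta}$ is recovered from $(P_s)$ through $\mathbbm{P}^{s,\eta}(F)=P_s[\mathds{1}_F](\eta)$, $F\in\mathcal{F}$. For surjectivity, starting from a path-dependent system of projectors $(P_s)$ I would set $\mathbbm{P}^{s,\eta}(F):=P_s[\mathds{1}_F](\eta)$ and first check that this is a probability measure: $\mathbbm{P}^{s,\eta}(\Omega)=P_s[1](\eta)=1$ because constants are $\mathcal{F}^o_s$-measurable, non-negativity is positivity preservation, and $\sigma$-additivity follows from the monotone continuity axiom applied to $\sum_{k\le n}\mathds{1}_{F_k}\uparrow\mathds{1}_{\bigcup_k F_k}$ for disjoint $F_k$. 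A standard extension (indicators, then simple functions by linearity, then bounded measurable ones by monotone continuity) then gives $\mathbbm{E}^{s,\eta}[Z]=P_s[Z](\eta)$ for all $Z\in\mathcal{B}_b(\Omega)$, so $\Theta$ sends this family back to $(P_s)$; it remains to verify the three items of Definition \ref{DefCondSyst}. Item 2 is exactly that $P_s$ maps into $\mathcal{B}^s_b(\Omega)$. Item 1 holds because $\{\omega^s=\eta^s\}$ lies in $\mathcal{F}^o_s$ (write it as a countable intersection over rational times, using continuity of paths), so its indicator $g_\eta$ belongs to $\mathcal{B}^s_b(\Omega)$ and $\mathbbm{P}^{s,\eta}(\omega^s=\eta^s)=P_s[g_\eta](\eta)=g_\eta(\eta)=1$. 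For item 3, $\omega\mapsto P_t[\mathds{1}_F](\omega)$ is $\mathcal{F}^o_t$-measurable, and for $G\in\mathcal{F}^o_t$ one computes
\[
\mathbbm{E}^{s,\eta}[\mathds{1}_G P_t[\mathds{1}_F]]=P_s[\mathds{1}_G P_t[\mathds{1}_F]](\eta)=P_s[P_t[\mathds{1}_{F\cap G}]](\eta)=P_s[\mathds{1}_{F\cap G}](\eta)=\mathbbm{P}^{s,\eta}(F\cap G),
\]
using $P_s\circ P_t=P_s$, so $P_t[\mathds{1}_F]$ is a version of $\mathbbm{E}^{s,\eta}[\mathds{1}_F\mid\mathcal{F}^o_t]$, which is \eqref{DE13}.

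\textbf{Main obstacle.} The only step that is not bookkeeping is the middle equality in the last display, $\mathds{1}_G P_t[\mathds{1}_F]=P_t[\mathds{1}_G\mathds{1}_F]$ for $G\in\mathcal{F}^o_t$, i.e.\ that each $P_t$ is a module map over $\mathcal{B}^t_b(\Omega)$; this is the heart of the converse implication and must be extracted from the three axioms. The argument I have in mind: for $A\in\mathcal{F}^o_t$ and $0\le Z\le 1$, positivity together with $P_t[\mathds{1}_A]=\mathds{1}_A$ forces $0\le P_t[Z\mathds{1}_A]\le\mathds{1}_A$, so $P_t[Z\mathds{1}_A]$ vanishes off $A$; decomposing $P_t[Z]=P_t[Z\mathds{1}_A]+P_t[Z\mathds{1}_{A^c}]$ and repeating the bound for $A^c$ then forces $P_t[Z\mathds{1}_A]=\mathds{1}_A P_t[Z]$ on $A$ as well. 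One removes the normalisation on $Z$ by linearity and passes from $\mathds{1}_A$ to a general $\phi\in\mathcal{B}^t_b(\Omega)$ via simple functions and the monotone continuity axiom. Once this module identity is available, all the verifications above are routine; the ``progressive'' clause of Definition \ref{DefCondSyst} plays no role in this correspondence.
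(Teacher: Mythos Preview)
The paper does not prove this proposition: it merely records that the result ``was the object of Corollary 3.1 of \cite{paperMPv2}'' and moves on. Your proposal therefore cannot be compared line-by-line against an argument in the present paper, but it does supply a complete and correct direct proof of the statement.

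Your identification of the module identity $P_t[\mathds{1}_A Z]=\mathds{1}_A P_t[Z]$ for $A\in\mathcal{F}^o_t$ as the one non-formal step is exactly right, and your derivation of it from positivity together with $P_t[\mathds{1}_A]=\mathds{1}_A$ is clean: the sandwich $0\le P_t[Z\mathds{1}_A]\le\mathds{1}_A$ forces vanishing off $A$, and the decomposition $P_t[Z]=P_t[Z\mathds{1}_A]+P_t[Z\mathds{1}_{A^c}]$ then pins down the values on $A$. All remaining verifications (that $\eta\mapsto P_s[\mathds{1}_F](\eta)$ is a probability measure, that item 1 of Definition \ref{DefCondSyst} follows from $P_s$ acting as the identity on $\mathcal{B}^s_b(\Omega)$, and that the tower property encodes \eqref{DE13}) are routine and correctly handled. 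Your closing remark that progressivity is irrelevant to the bijection is also accurate.
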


\begin{definition}\label{ProbaOp}
	Two elements in correspondence through the previous bijection will be said to be \textbf{associated}. 
\end{definition}

\begin{notation} \label{N310}
	Let $(P_s)_{s\in[0,T]}$ be a path-dependent system of projectors, and
	$(\mathbbm{P}^{s,\eta})_{(s,\eta)\in[0,T]\times \Omega}$ the associated path-dependent system of probability measures.
	Then for any r.v. $Z \in \mathcal{L}^1(\mathbbm{P}^{s,\eta})$, $P_s[Z](\eta)$ will still denote the expectation of $Z$ under $\mathbbm{P}^{s,\eta}$. In other words we extend the linear form $Z\longmapsto P_s[Z](\eta)$ from $\mathcal{B}_b(\Omega)$ to $\mathcal{L}^1(\mathbbm{P}^{s,\eta})$.
	
	If $Z:=(Z^1,\cdots,Z^d)$ is an $\mathbbm{R}^d$-valued r.v., then for all $s$, $P_s[Z]$ (if well-defined) will denote the  $\mathbbm{R}^d$-valued r.v. $(P_s[Z^1],\cdots,P_s[Z^d])$.
\end{notation}

For the results of this section, we are given a progressive path-dependent canonical class $(\mathbbm{P}^{s,\eta})_{(s,\eta)\in[0,T]\times \Omega}$ satisfying Hypothesis \ref{HypClass}
and the corresponding path-dependent system of projectors $(P_s)_{s\in[0,T]}$.




The statement below comes from Corollary 3.3 of \cite{paperMPv2}.
\begin{proposition}\label{CoroTrivial} 
	For every $(s,\eta)\in\mathbbm{R}_+\times\Omega$ and $F\in\mathcal{F}_s$, $\mathbbm{P}^{s,\eta}(F)\in\{0,1\}$. In particular, an $\mathcal{F}^{s,\eta}_s$-measurable r.v. will be $\mathbbm{P}^{s,\eta}$-a.s. equal to a constant.
\end{proposition}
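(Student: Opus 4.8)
The plan is to run a $0$--$1$ law off Hypothesis \ref{HypClass} specialised to the degenerate time $t=s$. First I would fix $(s,\eta)$ and $F\in\mathcal{F}_s$. Applying \eqref{DE14} with $t=s$ gives $\mathbbm{P}^{s,\eta}(F\mid\mathcal{F}_s)(\omega)=\mathbbm{P}^{s,\omega}(F)$ for $\mathbbm{P}^{s,\eta}$-almost every $\omega$. Since $F\in\mathcal{F}_s$, the left-hand side is $\mathbbm{P}^{s,\eta}$-a.s. equal to $\mathds{1}_F$, so $\mathds{1}_F(\omega)=\mathbbm{P}^{s,\omega}(F)$ for $\mathbbm{P}^{s,\eta}$-a.e. $\omega$.

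Next I would replace the random quantity $\mathbbm{P}^{s,\omega}(F)$ by the number $\mathbbm{P}^{s,\eta}(F)$. By item 2 of Definition \ref{DefCondSyst}, $\omega\mapsto\mathbbm{P}^{s,\omega}(F)$ is $\mathcal{F}^o_s$-measurable; as $\mathcal{F}^o_s=\sigma\!\left(\omega\mapsto\omega^s\right)$, this quantity depends on $\omega$ only through the stopped path $\omega^s$. By item 1 of Definition \ref{DefCondSyst}, $\mathbbm{P}^{s,\eta}(\omega^s=\eta^s)=1$, hence $\mathbbm{P}^{s,\omega}(F)=\mathbbm{P}^{s,\eta}(F)$ for $\mathbbm{P}^{s,\eta}$-a.e. $\omega$. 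Combining with the previous step, $\mathds{1}_F=\mathbbm{P}^{s,\eta}(F)$ $\mathbbm{P}^{s,\eta}$-a.s.; an indicator that is a.s. constant forces the constant into $\{0,1\}$, which is the first assertion. For the ``in particular'' clause, let $Z$ be real-valued and $\mathcal{F}^{s,\eta}_s$-measurable, where $\mathcal{F}^{s,\eta}_s$ denotes the $\mathbbm{P}^{s,\eta}$-completion of $\mathcal{F}_s$ (the $0$--$1$ property passes to the completion, since adjoining null sets does not change probabilities). Then $\mathbbm{P}^{s,\eta}(Z\le c)\in\{0,1\}$ for all $c\in\mathbbm{R}$, so the distribution function of $Z$ under $\mathbbm{P}^{s,\eta}$ is non-decreasing, right-continuous and $\{0,1\}$-valued, i.e. a single jump at $c_0:=\inf\{c\in\mathbbm{R}:\mathbbm{P}^{s,\eta}(Z\le c)=1\}$, whence $Z=c_0$ $\mathbbm{P}^{s,\eta}$-a.s. (for an $\mathbbm{R}^d$-valued $Z$ one argues coordinatewise).

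I do not expect a serious obstacle, since the argument is purely measure-theoretic bookkeeping with conditional expectations. The one point worth a line of justification is the identification $\mathcal{F}^o_s=\sigma(\omega\mapsto\omega^s)$, which is what allows items 1 and 2 of Definition \ref{DefCondSyst} to collapse $\mathbbm{P}^{s,\omega}(F)$ to $\mathbbm{P}^{s,\eta}(F)$ on a set of full $\mathbbm{P}^{s,\eta}$-measure. It is also worth stressing that Hypothesis \ref{HypClass}, i.e. conditioning by the right-continuous $\mathcal{F}_s$ rather than by $\mathcal{F}^o_s$, is essential here: the weaker relation \eqref{DE13} would only give $\mathbbm{P}^{s,\eta}(F\mid\mathcal{F}^o_s)=\mathds{1}_F$ for $F\in\mathcal{F}^o_s$, not for a general $F\in\mathcal{F}_s$.
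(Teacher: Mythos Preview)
Your argument is correct. The paper does not prove this proposition itself; it simply cites Corollary~3.3 of \cite{paperMPv2}. Your direct proof via the $0$--$1$ law is the natural one and almost certainly what the cited reference does: apply \eqref{DE14} at $t=s$, use that $\omega\mapsto\mathbbm{P}^{s,\omega}(F)$ factors through $\omega^s$ (by item~2 of Definition~\ref{DefCondSyst} together with $\mathcal{F}^o_s=\sigma(\pi_s)$), and collapse via item~1. Your closing remark on the necessity of Hypothesis~\ref{HypClass} (as opposed to the weaker \eqref{DE13}) is also on point.
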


The last notions and results of this subsection are taken from 
Section 5.2 of \cite{paperMPv2}.

We consider a couple $(\mathcal{D}(A),A)$ verifying the following.
\begin{hypothesis}\label{HypDA}
	\begin{enumerate}\
		\item $\mathcal{D}(A)$ is a linear subspace of the space of    $\mathbbm{F}^o$-progressively measurable processes;
		\item $A$ is a linear mapping from $\mathcal{D}(A)$ into the space of  $\mathbbm{F}^o$-progressively measurable processes;
		\item for all $\Phi\in\mathcal{D}(A)$, $\omega\in\Omega$, $\int_0^T|A\Phi_r(\omega)|dr<+\infty$;
		\item for all $\Phi\in\mathcal{D}(A)$, $(s,\eta)\in[0,T]\times \Omega$ and 
		$t\in[s,T]$, we have  
		\\
		$\mathbbm{E}^{s,\eta}\left[\int_{s}^{t}|A(\Phi)_r|dr\right]<+\infty$ and $\mathbbm{E}^{s,\eta}[|\Phi_t|]<+\infty$.
	\end{enumerate}
\end{hypothesis}

\begin{definition}\label{MPop}
\	\begin{enumerate}
		\item
		$(\mathbbm{P}^{s,\eta})_{(s,\eta)\in[0,T]\times \Omega}$ will be said to solve the \textbf{martingale problem associated to }$(\mathcal{D}(A),A)$ if for every $(s,\eta)\in[0,T]\times\Omega$, 
		\begin{itemize}
			\item $\mathbbm{P}^{s,\eta}(\omega^s=\eta^s)=1$;
			\item $\Phi-\int_0^{\cdot}A(\Phi)_rdr$,  is on $[s,T]$ a $(\mathbbm{P}^{s,\eta},\mathbbm{F}^o)$-martingale
			for all $\Phi\in\mathcal{D}(A)$.
		\end{itemize} 
		\item The martingale problem associated to $(\mathcal{D}(A),A)$ will be said to be \textbf{well-posed} if for every  $(s,\eta)\in[0,T]\times\Omega$ there exists a unique $\mathbbm{P}^{s,\eta}$ verifying both items above.
	\end{enumerate}
\end{definition}
Inspired from the classical literature (see 13.28 in \cite{jacod}) we have introduced in \cite{paperMPv2} the following notion of a weak  generator.
\begin{definition}\label{WeakGen} 
	We say that $(\mathcal{D}(A),A)$ is a \textbf{weak generator} of a path-dependent system of projectors $(P_s)_{s\in[0,T]}$ if for all $\Phi\in\mathcal{D}(A)$, $(s,\eta)\in[0,T]\times \Omega$ and 
	$t\in[s,T]$, we have
	\begin{equation}
	P_s[\Phi_t](\eta)=\Phi_s(\eta)+\int_s^tP_s[A(\Phi)_r](\eta)dr.
	\end{equation}
\end{definition}

The proposition below was the object of Proposition 5.6 in \cite{paperMPv2}.
\begin{proposition}\label{MPopWellPosed}
	$(\mathcal{D}(A),A)$ is a weak generator of  $(P_s)_{s\in[0,T]}$ if and only if $(\mathbbm{P}^{s,\eta})_{(s,\eta)\in[0,T]\times \Omega}$ solves the martingale problem associated to $(\mathcal{D}(A),A)$.
	
	In particular, if $(\mathbbm{P}^{s,\eta})_{(s,\eta)\in[0,T]\times \Omega}$ solves the well-posed martingale problem associated to  $(\mathcal{D}(A),A)$ then $(P_s)_{s\in[0,T]}$ is the unique path-dependent system of projectors for which $(\mathcal{D}(A),A)$ is a weak generator.
\end{proposition}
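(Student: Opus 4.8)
The statement is an equivalence, and the whole argument runs through the identity $P_s[Z](\eta)=\mathbbm{E}^{s,\eta}[Z]$, valid whenever $Z\in\mathcal{L}^1(\mathbbm{P}^{s,\eta})$ (Notation \ref{N310}), together with Fubini's theorem, which applies to every $\Phi\in\mathcal{D}(A)$ thanks to the integrability bounds of Hypothesis \ref{HypDA} (in particular $\mathbbm{E}^{s,\eta}[\int_s^t|A(\Phi)_r|\,dr]<\infty$ and $\mathbbm{E}^{s,\eta}[|\Phi_t|]<\infty$). The plan is to prove the two implications separately and then deduce the uniqueness statement from the first implication combined with the bijection of Proposition \ref{EqProbaOp}.

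For the implication ``martingale problem $\Rightarrow$ weak generator'', I would fix $\Phi\in\mathcal{D}(A)$, $(s,\eta)$ and $t\in[s,T]$, and take the $\mathbbm{E}^{s,\eta}$-expectation of the martingale property of $N:=\Phi-\int_0^\cdot A(\Phi)_r\,dr$ between times $s$ and $t$; after cancelling the common $\int_0^s$ term this gives $\mathbbm{E}^{s,\eta}[\Phi_t]-\mathbbm{E}^{s,\eta}[\int_s^tA(\Phi)_r\,dr]=\mathbbm{E}^{s,\eta}[\Phi_s]$. Since $\Phi_s$ is $\mathcal{F}^o_s$-measurable and $\mathbbm{P}^{s,\eta}(\omega^s=\eta^s)=1$, one has $\mathbbm{E}^{s,\eta}[\Phi_s]=\Phi_s(\eta)$; applying Fubini to the drift term then rewrites the identity as $P_s[\Phi_t](\eta)=\Phi_s(\eta)+\int_s^tP_s[A(\Phi)_r](\eta)\,dr$, which is exactly Definition \ref{WeakGen}.

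The reverse implication is the more delicate one. Assuming the weak generator identity, I must check that $N=\Phi-\int_0^\cdot A(\Phi)_r\,dr$ is a $(\mathbbm{P}^{s,\eta},\mathbbm{F}^o)$-martingale on $[s,T]$ for each $(s,\eta)$; adaptedness is clear from $\mathbbm{F}^o$-progressivity, and integrability of $N_t$ for $t\in[s,T]$ follows from Hypothesis \ref{HypDA} once one notes that on $[0,s]$ the process $A(\Phi)$ is $\mathbbm{P}^{s,\eta}$-a.s. frozen at its value along $\eta$ (being $\mathcal{F}^o_s$-measurable there). For the martingale property, fix $s\le u\le t\le T$ and compute $\mathbbm{E}^{s,\eta}[N_t\mid\mathcal{F}^o_u]$ using the conditioning formula $\mathbbm{E}^{s,\eta}[Z\mid\mathcal{F}^o_u](\omega)=\mathbbm{E}^{u,\omega}[Z]$ of Remark \ref{C7Borel} (licit since the class is progressive and satisfies Hypothesis \ref{HypClass}): this yields $\mathbbm{E}^{s,\eta}[\Phi_t\mid\mathcal{F}^o_u](\omega)=\mathbbm{E}^{u,\omega}[\Phi_t]$, to which the weak generator identity at $(u,\omega)$ applies, while $\mathbbm{E}^{s,\eta}[\int_0^tA(\Phi)_r\,dr\mid\mathcal{F}^o_u](\omega)=\int_0^uA(\Phi)_r(\omega)\,dr+\int_u^t\mathbbm{E}^{u,\omega}[A(\Phi)_r]\,dr$, splitting the integral at $u$ and using Fubini on the second piece. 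Subtracting, the two copies of $\int_u^t\mathbbm{E}^{u,\omega}[A(\Phi)_r]\,dr$ cancel, leaving $\Phi_u(\omega)-\int_0^uA(\Phi)_r(\omega)\,dr=N_u(\omega)$; together with $\mathbbm{P}^{s,\eta}(\omega^s=\eta^s)=1$, built into Definition \ref{DefCondSyst}, this is Definition \ref{MPop}.

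Finally, for the ``in particular'' part, suppose the martingale problem is well-posed, let $(\mathbbm{P}^{s,\eta})_{(s,\eta)\in[0,T]\times\Omega}$ be its unique solution and $(P_s)_{s\in[0,T]}$ the associated system of projectors (Proposition \ref{EqProbaOp}, Definition \ref{ProbaOp}); by the first implication $(\mathcal{D}(A),A)$ is a weak generator of $(P_s)_{s\in[0,T]}$. If $(P'_s)_{s\in[0,T]}$ were another path-dependent system of projectors admitting $(\mathcal{D}(A),A)$ as a weak generator, then by Proposition \ref{EqProbaOp} it corresponds to some path-dependent system of probability measures $((\mathbbm{P}')^{s,\eta})_{(s,\eta)\in[0,T]\times\Omega}$, which by the reverse implication solves the martingale problem associated to $(\mathcal{D}(A),A)$; well-posedness then forces $(\mathbbm{P}')^{s,\eta}=\mathbbm{P}^{s,\eta}$ for every $(s,\eta)$, hence $P'_s=P_s$. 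The main obstacle throughout is purely bookkeeping: tracking which integrals are $\mathcal{F}^o_u$-measurable and justifying each interchange of expectation and time-integral via the Hypothesis \ref{HypDA} bounds; no idea beyond the conditioning formula of Remark \ref{C7Borel} is needed.
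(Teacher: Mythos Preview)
Your argument is correct and is the natural one. The paper itself does not give a proof here: it simply records that the statement ``was the object of Proposition 5.6 in \cite{paperMPv2}'', so there is nothing to compare against beyond the remark that your direct computation via the conditioning identity $\mathbbm{E}^{s,\eta}[Z\mid\mathcal{F}^o_u](\omega)=\mathbbm{E}^{u,\omega}[Z]$ and Fubini is exactly the expected route.

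Two minor comments. First, for the reverse implication you invoke progressivity and Hypothesis \ref{HypClass} to justify the conditioning formula, but that formula (with respect to $\mathcal{F}^o_u$, which is all the $\mathbbm{F}^o$-martingale property requires) is already item 3 of Definition \ref{DefCondSyst}, extended to integrable r.v.'s in Remark \ref{C7Borel}; neither progressivity nor Hypothesis \ref{HypClass} is needed there. Second, in the uniqueness paragraph, when you pass from a second system of projectors $(P'_s)$ to its associated class $((\mathbbm{P}')^{s,\eta})$ and apply the reverse implication, you are tacitly using that the integrability bounds of Hypothesis \ref{HypDA} also hold under each $(\mathbbm{P}')^{s,\eta}$; this is implicit in the very statement that $(\mathcal{D}(A),A)$ is a weak generator of $(P'_s)$ (otherwise $P'_s[\Phi_t]$ and $P'_s[A(\Phi)_r]$ would not be defined), but it is worth saying so explicitly.
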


In the setup of the last statement, one can therefore associate analytically  to $(\mathcal{D}(A),A)$ a unique path-dependent system of projectors $(P_s)_{s\in[0,T]}$ through Definition \ref{WeakGen}.

\section{Path-dependent canonical classes induced by Gaussian measures}\label{SGauss}

\begin{notation}
	Let $(E,\| \cdot \|)$ be a Banach space and $F$ be a linear subspace of $E$ then  its closure will be denoted $\overline{F}^{\| \cdot\|} $ or $\overline{F}^{E}$ when there can be no ambiguity concerning the chosen norm.
\end{notation}
In this section we will also adopt the conventions of Section \ref{S1}.
Most of the following definitions are taken from \cite{bogachevGauss} Chapter 2.2.
\begin{definition}\label{DefBasic}
Let $\mathbbm{P}$ be a {\bf Gaussian measure} on $(\Omega,\mathcal{F})$, i.e. a probability measure such that for any $n\in \mathbbm{N}^*$ and $l_1,\cdots,l_n\in \Omega^*$,  $(l_1,\cdots,l_n)$ has under $\mathbbm{P}$ the law of a Gaussian vector.
Let $L^2(\mathbbm{P})$ denote the corresponding space of square integrable random variables and assume  that  $\underset{t\in[0,T]}{\text{sup }}\|X_t\|\in L^p(\mathbbm{P})$ for all $p\in\mathbbm{N}$.

\begin{itemize}
	\item We define the covariance operator of $\mathbbm{P}$  $K:\Omega^*\longmapsto \Omega$ by $Kl:t\mapsto \mathbbm{E}[\omega(t)l(\omega)]$ for all $l\in\Omega^{*}$. 
	\item We denominate {\bf covariance function} of $\mathbbm{P}$ the (symmetric matrix valued) function $c:(s,t)\longmapsto \mathbbm{E}[X_sX_t]$, and {\bf mean function} of $\mathbbm{P}$ the function $m:t\longmapsto \mathbbm{E}[X_t]$. The $(i,j)$-th coordinate of $c$ will be denoted $c_{i,j}$.
	\item We say that  $\mathbbm{P}$ is {\bf of full support} if the smallest closed subset of $\Omega$ of measure $1$ is $\Omega$.

	\item We say that $X$ admits a {\bf representation} under $\mathbbm{P}$ if the following holds. There exists a function $k:[0,T]^2\longrightarrow \mathcal{M}_d(\mathbbm{R})$ such that for all $t\in [0,T]$, $k(t,\cdot)\in L^2([0,t])$ and taking value $0$ on $]t,T]$;    and an $\mathbbm{F}^o$-adapted $\mathbbm{R}^d$-valued Brownian motion $B:=(B^1,\cdots,B^d)$ defined on $(\Omega,\mathcal{F})$ such that
	\begin{equation}\label{Repk}
		X_t = \int_0^tk(t,r)dB_r,\quad \mathbbm{P}\text{ a.s.}
	\end{equation}
  In this case, $k$ is called the \textbf{kernel function} of $\mathbbm{P}$.
  \item We call {\bf Reproducing Kernel Hilbert Space} (RKHS) 
the Hilbert space of centered elements of 
$\overline{Span(\{X^i_r|i\leq d;r\in[0,T]\})}^{L^2(\mathbbm{P})}$ which we denote $H(\mathbbm{P})$, we also denote for all $t\in[0,T]$ the Hilbert subspace  $H^t(\mathbbm{P})$ of centered elements of $\overline{Span(\{X^i_r|i\leq d;r\in[0,t]\})}^{L^2(\mathbbm{P})}$.
	\item We call {\bf Cameron-Martin space}
 which we denote  $\mathcal{H}(\mathbbm{P})$, the space of functions $c_Y:t\longmapsto \mathbbm{E}[Y X_t]$ for $Y\in H(\mathbbm{P})$, we equip it with the scalar product defined by $(c_Y,c_Z)_{\mathcal{H}(\mathbbm{P})}:=\mathbbm{E}[YZ]$ for all $Y,Z\in H(\mathbbm{P})$, which makes it a Hilbert space. We also denote for all $t\in[0,T]$ the Hilbert subspace  $\mathcal{H}^t(\mathbbm{P})$ of functions $c_Y:t\longmapsto \mathbbm{E}[Y X_t]$ for $Y\in H^t(\mathbbm{P})$.
	\item We say that $(\mathcal{H}(\mathbbm{P}),\Omega,\mathbbm{P})$ is an 
{\bf abstract Wiener space} if $\mathcal{H}(\mathbbm{P})$ is a dense (for $\|\cdot\|_{\infty}$) subspace of $\Omega$.
\end{itemize}
\end{definition}
\begin{remark} \label{R35}
About Definition \ref{DefBasic}
we mention the following. 
\begin{itemize}
\item If $\mathbbm{P}$ is a Gaussian measure on $(\Omega,\mathcal{F})$ (see Definition 2.2.1 in \cite{bogachevGauss}), then the canonical process $X$ (is under $\mathbbm{P}$) a Gaussian process;

\item  $K$ indeed maps $\Omega^*$ into $\Omega$ because of Theorem 3.2.3 in \cite{bogachevGauss};
\item  our definition of $\mathcal{H}(\mathbbm{P})$ is not the one of \cite{bogachevGauss} Chapter 2.2, but is equivalent again by Theorem 3.2.3 ibidem, which also ensures  that elements of $\mathcal{H}(\mathbbm{P})$ belong to $\Omega$;
\item  $\mathbbm{P}$ is of full support if and only if $\mathcal{H}(\mathbbm{P})$ is dense in $\Omega$ for $\|\,\|_{\infty}$ (i.e. $(\mathcal{H}(\mathbbm{P}),\Omega,\mathbbm{P})$ is an abstract Wiener space), see Theorem 3.6.1 in \cite{bogachevGauss}.
\end{itemize}
\end{remark}

We consider a Gaussian probability measure $\mathbbm{P}$ on $(\Omega,\mathcal{F})$ verifying the following.
\begin{hypothesis}\label{HypGaussProba}
	\begin{enumerate}\
		\item $\underset{t\in[0,T]}{\text{sup }}\|X_t\|\in L^p(\mathbbm{P})$ for every $p \ge 1$;
		\item $\mathbbm{P}$ is of full support;
		\item  
$X$ admits a representation under $\mathbbm{P}$ with respect to some 
Brownian motion $B$, with a kernel $k$, see \eqref{Repk}.
		\item for all $t\in[0,T]$ and $h\in L^2([0,T])$ we have that $\int_0^{\cdot}k(\cdot,r)h(r)dr \equiv 0$ on $[0,t]$ implies that $h$ is Lebesgue a.e. equal to zero on $[0,t]$; 
		\item 
		\begin{equation}
		M_{op}:=\underset{i\leq d}{\text{max}}\underset{s\in[0,T]}{\text{sup }}\underset{r\in[0,s]}{\text{sup }}\frac{\underset{j\leq d}{\text{max}}\underset{t\in[0,T]}{\text{sup }}|c_{i,j}(r,t)|}{\underset{j'\leq d}{\text{max}}\underset{r'\in[0,s]}{\text{sup }}|c_{i,j'}(r,r')|}<+\infty.
		\end{equation}
	\end{enumerate}
\end{hypothesis}

\begin{remark}\label{RemHypGaussProba}
\begin{enumerate}\
\item Item 5. of Hypothesis \ref{HypGaussProba} is verified for example by the following processes.
	\begin{itemize}
		\item Stationary processes, see Corollary 5 in \cite{lagatta};
		\item the fractional Brownian motion of Hurst index 
$H\in]0,1[$, see the proof of Theorem 3.1 in \cite{Sottinen}.
	\end{itemize}
\item If items 3. and 4. in Hypothesis \ref{HypGaussProba} hold,
 then by Theorem 1.7 in 
\cite{HidaCanRep}, 
we know that for all $t\in[0,T]$, $H^t(\mathbbm{P})=\overline{Span(\{B^i_r|i\leq d;r\in[0,t]\})}^{L^2(\mathbbm{P})}$. 
\end{enumerate}
\end{remark}

\begin{notation}\label{Npis}
	\begin{itemize}\
		\item For all $s\in[0,T]$, we denote by $\Omega_s$ the Banach subspace of $\Omega$ constituted of paths $\omega$ constant after time $s$, i.e. such that $\omega=\omega^s$ and we denote by $\pi_s$ the continuous mapping 
		$\begin{array}{rcl}
		\Omega&\longrightarrow&\Omega_s\\
		\omega&\longmapsto& \omega^s.
		\end{array}$
		\item By a slight abuse of notation, we denote by $\Omega_s^{\perp}$ the Banach subspace $\Omega$ constituted of paths $\omega$ taking value $0$ on $[0,s]$, and by 
		$\pi_s^{\perp}$ the continuous mapping 
		$\begin{array}{rcl}
		\Omega&\longrightarrow&\Omega_s^{\perp}\\
		\omega&\longmapsto& \omega - \omega^s.
		\end{array}$
		
		\item $K:\Omega^{*}\longrightarrow\Omega$ denotes the covariance operator of $\mathbbm{P}$, see Definition \ref{DefBasic}.
		
		\item Let $k$ be the function appearing in \eqref{Repk}, then for all $i\leq d$, $k_i:[0,T]\times[0,T]\rightarrow\mathbbm{R}^d$ will denote its $i$-th column, and for all $i,j\leq d$, $k_{i,j}:[0,T]\times[0,T]\rightarrow\mathbbm{R}$ will denote its $(i,j)$-th coefficient.
	\end{itemize}
	
\end{notation}
In the proposition below for every $s \in [0,T], \eta \in \Omega_s$
we introduce a Gaussian probability measure $ P^{s,\eta}$
which represents the conditional law of \eqref{Repk}
given $\omega^s = \eta^s$.
\begin{proposition}\label{CondExp}\
\begin{enumerate}
\item $K\Omega^{*}$ is dense in $\Omega$.
\item For every $s\in[0,T]$, there exists a set of Gaussian probability measures $(\mathbbm{P}^{s,\eta})_{\eta\in\Omega_s}$ 
(with related expectations $(\mathbbm{E}^{s,\eta})_{\eta\in\Omega_s}$) 
and a continuous operator $m_s:\Omega_s\longrightarrow\Omega$ such that the following holds. 
	\begin{enumerate}
		\item For all $\eta\in\Omega_s$, $\mathbbm{P}^{s,\eta}(\omega^s=\eta)=1$;
		\item $\eta\longmapsto \mathbbm{P}^{s,\eta}$ is continuous;
		\item for every   $t\geq s$  and $F\in\mathcal{F}$,
		\begin{equation} \label{regcondexp}
		\mathbbm{P}(F|\mathcal{F}^o_s)(\eta)=\mathbbm{P}^{s,\eta}(F)\text{ for }\mathbbm{P}\text{ almost all }\eta;
		\end{equation}
		\item for all $t\in[0,T]$, $\mathbbm{E}^{s,\eta}[X_t]=m_s[\eta](t)$;
		\item for all $s$, $m_s$ has an operator norm inferior to $M_{op}$;
		\item $\pi_sK\pi_s^{*}\Omega_s^{*}$ is dense in $\Omega_s$ and that on $K\pi_s^{*}\Omega_s^{*}$, $m_s\circ\pi_s$coincides with the identity.
	\end{enumerate}
\end{enumerate}
\end{proposition}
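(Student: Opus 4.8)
The plan is to build the conditional measures $\mathbbm{P}^{s,\eta}$ by explicitly decomposing the Gaussian measure $\mathbbm{P}$ along the splitting $\Omega \cong \Omega_s \oplus \Omega_s^{\perp}$ induced by $\pi_s$ and $\pi_s^{\perp}$, and then reading off properties (a)--(f) from the structure of Gaussian conditioning. First, for item 1, I would show $K\Omega^{*}$ is dense in $\Omega$: since $\mathcal{H}(\mathbbm{P})$ is dense in $\Omega$ by Hypothesis \ref{HypGaussProba}(2) and Remark \ref{R35}, and since by Theorem 3.2.3 of \cite{bogachevGauss} the image $K\Omega^{*}$ is dense in $\mathcal{H}(\mathbbm{P})$ for the Cameron--Martin norm (hence a fortiori dense for $\|\cdot\|_{\infty}$ since the Cameron--Martin injection is continuous into $\Omega$), density in $\Omega$ follows by composing the two density statements. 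For item 2, I would fix $s$ and let $X^s := \pi_s(X) = X^{\cdot \wedge s}$ and $X^{s,\perp} := \pi_s^{\perp}(X)$; these are jointly Gaussian. Using the representation \eqref{Repk} and the fact that $k(t,\cdot)$ vanishes on $]t,T]$, the increment process $X - X^s$ on $[s,T]$ is driven by $(B_r - B_s)_{r \ge s}$ together with the "memory" terms $\int_0^s k(t,r)\,dB_r$ for $t > s$; the classical Gaussian conditioning formula then gives, conditionally on $\mathcal{F}^o_s$, a Gaussian law whose mean is a continuous linear function of the observed path $\eta \in \Omega_s$ — this defines $m_s$ — and whose covariance does not depend on $\eta$.

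The operator $m_s$ is constructed concretely as follows: for $Y \in H^t(\mathbbm{P})$, conditioning $X_t$ on $\mathcal{F}^o_s$ amounts to orthogonally projecting (in $L^2(\mathbbm{P})$) the centered part of $X_t$ onto $H^s(\mathbbm{P})$, and adding back $m(t)$; by Remark \ref{RemHypGaussProba}(2), $H^s(\mathbbm{P}) = \overline{\mathrm{Span}}\{B^i_r : r \le s\}$, so this projection is computable. I would then define $m_s[\eta](t)$ to be this conditional expectation expressed as a function of $\eta = \omega^s$, establishing (d), and bound its operator norm: the point of Hypothesis \ref{HypGaussProba}(5), i.e. the finiteness of $M_{op}$, is precisely that the conditional-expectation coefficients, which are ratios of covariances, are uniformly controlled, giving (e). Property (a) is immediate from the construction (the conditioned process agrees with $\eta$ on $[0,s]$ by definition of the regular conditional distribution restricted to $\mathcal{F}^o_s$), (c) is the defining property of a regular conditional probability distribution of $\mathbbm{P}$ given $\mathcal{F}^o_s$ (which exists since $\Omega$ is Polish), and (b) — continuity of $\eta \mapsto \mathbbm{P}^{s,\eta}$ in the weak topology — follows because both the mean $m_s[\eta]$ depends continuously (indeed linearly) on $\eta$ and the covariance is constant in $\eta$, so the characteristic functionals converge. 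For (f), I would use that $\pi_s K \pi_s^{*}$ is the covariance operator of the pushforward Gaussian measure $\mathbbm{P} \circ \pi_s^{-1}$ on $\Omega_s$; its range is dense in the support of that pushforward, which is $\Omega_s$ by the full-support hypothesis together with the non-degeneracy Hypothesis \ref{HypGaussProba}(4) (which prevents the kernel from killing directions in $\Omega_s$); and on elements of the form $K\pi_s^{*}\ell = Kl$ with $l = \pi_s^{*}\ell$ factoring through $\pi_s$, such an $l$ "sees" only the path up to time $s$, so the conditional mean of the corresponding Cameron--Martin element reproduces it, i.e. $m_s \circ \pi_s = \mathrm{id}$ there.

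The main obstacle I expect is twofold. First, making the construction of $m_s$ genuinely canonical and bounded requires care with the non-Markovian memory structure: unlike the Brownian case, $X_t - \mathbbm{E}[X_t \mid \mathcal{F}^o_s]$ for $t > s$ involves both the future Brownian increments and the already-observed increments on $[0,s]$ re-weighted by $k(t,\cdot)$, so the conditional mean is not simply "freeze the path and evolve deterministically" — one must verify that the $L^2$-projection onto $H^s(\mathbbm{P})$ can be written as a continuous operator of $\eta$ with norm bounded by $M_{op}$, and this is exactly where the explicit ratio-of-covariances estimate in Hypothesis \ref{HypGaussProba}(5) is consumed. Second, proving the density statement in (f) and the reproducing property on $K\pi_s^{*}\Omega_s^{*}$ requires correctly identifying the Cameron--Martin space of the pushforward measure $\mathbbm{P}\circ\pi_s^{-1}$ with $\pi_s(\mathcal{H}(\mathbbm{P}))$ and checking that the conditioning operator $m_s$ acts as the identity on that subspace — a compatibility between the abstract Wiener space structure of $\Omega$ and its restriction to $\Omega_s$ that is intuitively clear but must be argued via the covariance operators, using again that $\pi_s K \pi_s^{*}$ has dense range by \ref{HypGaussProba}(4). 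The remaining verifications (measurability in $\eta$, the a.s.\ clauses) are routine consequences of standard properties of regular conditional probability distributions on Polish spaces and of Gaussian measures, as catalogued in \cite{bogachevGauss} Chapter 2.2 and \cite{stroock}.
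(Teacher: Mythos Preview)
Your plan is a reasonable constructive route, but the paper's own proof is essentially a one-line citation: both items are obtained by invoking Theorem 2.1, Theorem 2, and Lemma 2.2 of \cite{lagatta} (on continuous disintegrations of Gaussian processes), applied for fixed $s$ to the continuous linear map $\pi_s : \Omega \to \Omega_s$. The paper does not reprove any of (a)--(f) from scratch; it imports them wholesale from that reference, using only Hypothesis \ref{HypGaussProba} item 2 to ensure the full-support condition that \cite{lagatta} requires.

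Your approach is therefore genuinely different in that it is self-contained: you propose to redo the Gaussian conditioning explicitly via the $L^2$-orthogonal projection onto $H^s(\mathbbm{P})$ and then read off (a)--(f). This is essentially what \cite{lagatta} does internally, so you would be reproducing that paper's argument rather than citing it. The advantage is transparency about where each hypothesis is used; the cost is length, and the paper clearly opted for brevity.

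One caution on a point you yourself flag as a main obstacle: showing that the conditional-mean map extends to a bounded operator $m_s:\Omega_s \to \Omega$ with norm at most $M_{op}$ is precisely the nontrivial content of Lemma 2.2 in \cite{lagatta}, and your sentence ``this is exactly where Hypothesis \ref{HypGaussProba}(5) is consumed'' does not yet constitute an argument. The issue is that $m_s[\eta](t)$ is a priori only well-defined for $\eta$ in a dense subspace such as $\pi_s K\pi_s^{*}\Omega_s^{*}$, and one must show it extends continuously to all of $\Omega_s$; the specific ratio-of-sup-norms form of $M_{op}$ is what makes this work, via an estimate on the coefficients appearing when $\mathbbm{E}[X^i_t\mid \mathcal{F}^o_s]$ is approximated by finite-dimensional Gaussian regressions. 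If you intend a genuinely self-contained proof, this step needs to be written out; otherwise, citing \cite{lagatta} as the paper does is the efficient choice.
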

\begin{proof}
The  statement 1. follows from Theorem 2.1 in \cite{lagatta} and the fact that by Hypothesis \ref{HypGaussProba} item 2., the support of $\mathbbm{P}$ is $\Omega$.

The  statement 2. follows  from  Theorem 2.1 and Theorem 2 
 and the statement and proof of Lemma 2.2 in \cite{lagatta} applied for fixed $s$ to the continuous linear operator $\pi_s$ between the Banach spaces $\Omega$ and $\Omega_s$.

\end{proof}

\begin{definition}\label{extension}
\begin{enumerate}
\item For every $s\in[0,T]$, we extend $\eta\mapsto m_s[\eta]$ and $\eta\mapsto\mathbbm{P}^{s,\eta}$ from $\Omega_s$ to $\Omega$ by setting for all $\eta\in\Omega$, $m_s[\eta]:=m_s[\eta^s]$ and $\mathbbm{P}^{s,\eta}:=\mathbbm{P}^{s,\eta^s}$. 
\item  $(s,\eta)\mapsto m_s[\eta]$ will be called the {\bf mean random field}.
\end{enumerate} 
\end{definition}
By continuity of $\pi_s$, we remark that for all $s$, $\eta\mapsto m_s[\eta]$, $\eta\longmapsto \mathbbm{P}^{s,\eta}$ remain continuous.
\begin{remark}\label{ms}
	The family of linear operators $(m_s)_{s\in[0,T]}$ 
constituting the mean random field
is crucial  in this paper.
\begin{enumerate}
\item First, that consitutes  a very useful analytical tool, since it has permit
 to \cite{lagatta} to prove that the regular conditional expectation of $\mathbbm{P}$ is
 continuous, see Proposition \ref{CondExp} item 2.b.
\item
Second,	it is also  central at the  probabilistic level of this paper.
 In particular $m_s[\eta]$ is the mean function of $\mathbbm{P}^{s,\eta}$,
 the mean random field will also allow us to construct driving martingales 
 for  our BSDEs, as we will see in item 3. of Proposition \ref{mTprop}.
\end{enumerate}
\end{remark}

 The proof of Propositions \ref{ClassGauss} and \ref{mcad}  
  below is postponed to the Appendix.
\begin{proposition}\label{ClassGauss}
$(\mathbbm{P}^{s,\eta})_{(s,\eta)\in[0,T]\times\Omega}$ is a path-dependent canonical class.
\end{proposition}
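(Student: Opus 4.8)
The plan is to verify the three defining items of Definition \ref{DefCondSyst} for the family $(\mathbbm{P}^{s,\eta})_{(s,\eta)\in[0,T]\times\Omega}$ obtained from Proposition \ref{CondExp} after the extension of Definition \ref{extension}. Item 1 is essentially immediate: by construction $\mathbbm{P}^{s,\eta}=\mathbbm{P}^{s,\eta^s}$ and Proposition \ref{CondExp}(2a) gives $\mathbbm{P}^{s,\eta^s}(\omega^s=\eta^s)=1$, which is exactly what is needed since $\eta^s=(\eta^s)^s$. For item 2, fix $s$ and $F\in\mathcal{F}$; we must show $\eta\mapsto\mathbbm{P}^{s,\eta}(F)$ is $\mathcal{F}^o_s$-measurable. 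Here I would use that this map factors as $\eta\mapsto\eta^s=\pi_s(\eta)\mapsto\mathbbm{P}^{s,\eta^s}(F)$, where the first map is $\mathcal{F}^o_s/\mathcal{B}(\Omega_s)$-measurable (indeed $\pi_s$ is continuous and, on $\Omega$, the $\sigma$-field generated by the coordinates up to time $s$ is exactly $\sigma(\pi_s)$) and the second map is Borel measurable on $\Omega_s$ because by Proposition \ref{CondExp}(2b) it is even continuous in $\eta\in\Omega_s$ (weak convergence of the measures $\mathbbm{P}^{s,\eta}$) — and evaluation $\mathbbm{P}'\mapsto\mathbbm{P}'(F)$ is Borel on $\mathcal{P}(\Omega)$ for each fixed $F\in\mathcal{F}$, hence Borel-measurable for $F$ in a class generating $\mathcal{F}$, then extended by a monotone class / Dynkin argument to all of $\mathcal{F}$.

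The substantive point is item 3, the conditioning identity \eqref{DE13}: for $(s,\eta)$, $t\ge s$ and $F\in\mathcal{F}$ one needs $\mathbbm{P}^{s,\eta}(F\mid\mathcal{F}^o_t)(\omega)=\mathbbm{P}^{t,\omega}(F)$ for $\mathbbm{P}^{s,\eta}$-almost all $\omega$. The idea is to reduce the general $(s,\eta)$ case to the unconditioned measure $\mathbbm{P}$, for which Proposition \ref{CondExp}(2c) already provides the regular conditional probability given $\mathcal{F}^o_t$: $\mathbbm{P}(F\mid\mathcal{F}^o_t)(\omega)=\mathbbm{P}^{t,\omega}(F)$ for $\mathbbm{P}$-a.a. $\omega$. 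First observe that since $\mathbbm{P}^{s,\eta}$ depends only on $\eta^s$ and we are conditioning at a time $t\ge s$, it suffices to understand how conditioning behaves consistently across times, i.e. a tower-type property: for $s\le t$, the disintegration of $\mathbbm{P}=\mathbbm{P}^{0,0}$ along $\mathcal{F}^o_s$ into the $\mathbbm{P}^{s,\eta}$, and then each $\mathbbm{P}^{s,\eta}$ along $\mathcal{F}^o_t$ into the $\mathbbm{P}^{t,\omega}$, must agree with the direct disintegration of $\mathbbm{P}$ along $\mathcal{F}^o_t$. Concretely, I would argue that for $\mathbbm{P}$-a.a. $\eta$ and then $\mathbbm{P}^{s,\eta}$-a.a. $\omega$ the measure $\mathbbm{P}^{t,\omega}$ constructed at level $t$ from $\mathbbm{P}^{s,\eta}$ coincides with the one constructed from $\mathbbm{P}$; this uses uniqueness of regular conditional probabilities on the Polish space $\Omega$ together with the Gaussian conditioning structure (the conditional law of a Gaussian measure given a sub-$\sigma$-field generated by coordinates is again Gaussian with mean given by the relevant $m_\cdot$ operator and a covariance that does not depend on the conditioning path). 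Then one upgrades ``$\mathbbm{P}$-a.a. $\eta$'' to ``every $\eta\in\Omega_s$'': because $\mathbbm{P}$ has full support (Hypothesis \ref{HypGaussProba}(2)) and both $\eta\mapsto\mathbbm{P}^{s,\eta}$ and, for fixed $F$ in a convergence-determining class, $\omega\mapsto\mathbbm{P}^{t,\omega}(F)$ are continuous (Proposition \ref{CondExp}(2b)), the identity, which holds on a dense set of $\eta$, extends by continuity to all $\eta\in\Omega_s$, and then to all $\eta\in\Omega$ by the definition $\mathbbm{P}^{s,\eta}:=\mathbbm{P}^{s,\eta^s}$ (note $\mathcal{F}^o_t\supseteq\mathcal{F}^o_s$ when $t\ge s$, so $\mathbbm{P}^{t,\omega}$ is evaluated at paths $\omega$ that already agree with $\eta$ on $[0,s]$).

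I expect the continuity/density extension and the ``a.e. versus everywhere'' bookkeeping to be the main obstacle, rather than any hard estimate: one must be careful that the exceptional null sets in Proposition \ref{CondExp}(2c) may depend on $\eta$, so passing to ``for all $\eta$'' genuinely requires the full-support hypothesis plus the continuity of the conditional-law maps, and one must check that the class of $F\in\mathcal{F}$ for which \eqref{DE13} holds is closed under the monotone-class operations (it is, because both sides are, for fixed $\omega$, finite measures in $F$ and agree on the $\pi$-system of cylinder sets). A clean way to organize this is: (i) prove \eqref{DE13} for cylinder functionals $F=g(X_{t_1},\dots,X_{t_n})$ with $g$ bounded continuous, using explicit Gaussian conditional laws and the operators $m_s,m_t$; (ii) note both sides, as functions of $\omega$, are $\mathbbm{P}^{s,\eta}$-integrable and their equality persists under bounded pointwise limits and under the monotone class theorem, giving all $F\in\mathcal{F}$; (iii) handle the dependence on $(s,\eta)$ by the continuity argument above. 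The progressivity statement is not claimed here, so I would stop once the three items of Definition \ref{DefCondSyst} are established.
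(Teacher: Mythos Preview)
Your proposal is correct and follows essentially the same strategy as the paper: items 1 and 2 are handled exactly as you describe, and for item 3 the paper also first establishes \eqref{DE13} for $\mathbbm{P}$-almost every $\eta$ via the tower property (using Proposition \ref{CondExp}(2c) at levels $s$ and $t$), and then upgrades to every $\eta$ by the full-support hypothesis combined with the continuity of $\eta\mapsto\mathbbm{P}^{s,\eta}$ and $\omega\mapsto\mathbbm{P}^{t,\omega}$.

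Two organizational points where the paper is sharper than your sketch and which you should incorporate. First, to pass from ``$\mathbbm{P}$-a.e.\ $\eta$'' to ``every $\eta$'' by density, one must produce a \emph{single} $\mathbbm{P}$-full-measure set of good $\eta$'s, not one that depends on $F$ and the test set $G\in\mathcal{F}^o_t$; the paper handles this by first restricting $F$ and $G$ to \emph{countable} $\pi$-systems generating $\mathcal{F}$ and $\mathcal{F}^o_t$, getting a common null set, and only then running the monotone class argument. Second, your step (i) proposing ``explicit Gaussian conditional laws and the operators $m_s,m_t$'' is unnecessary: the paper never invokes any Gaussian-specific computation for item 3, the abstract tower identity $\mathbbm{E}[\mathbbm{E}[\cdot\mid\mathcal{F}^o_t]\mid\mathcal{F}^o_s]=\mathbbm{E}[\cdot\mid\mathcal{F}^o_s]$ together with \eqref{regcondexp} does all the work. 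Your route via explicit Gaussian conditioning would also succeed, but it is heavier and duplicates information already encoded in Proposition \ref{CondExp}(2c).
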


\begin{proposition}\label{mcad}
For all $\eta\in\Omega$, $s\longmapsto m_s[\eta]$ is right-continuous in $(\Omega,\|\cdot\|_{\infty})$, in particular, for all $t\in[0,T]$ $s\longmapsto m_s[\eta](t)$ is right-continuous.
\end{proposition}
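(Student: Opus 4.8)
The plan is to derive a self-consistency identity for the mean random field from the path-dependent canonical class property of $(\mathbbm{P}^{s,\eta})_{(s,\eta)\in[0,T]\times\Omega}$ (Proposition \ref{ClassGauss}), and then to control the increments in $s$ by means of the uniform bound $\|m_s\|\leq M_{op}$ of Proposition \ref{CondExp} item 2.(e). Fix $\eta\in\Omega$ and $s\in[0,T[$ (the case $s=T$ being vacuous), and let $s'\in]s,T]$. For a fixed $u\in[0,T]$ I would apply the first bullet of Remark \ref{C7Borel} with $Z=X_u$ and $t=s'$: since $X_u\in\mathcal{L}^1(\mathbbm{P}^{s,\eta})$ (as $\mathbbm{P}^{s,\eta}$ is Gaussian and $\mathbbm{E}^{s,\eta}[X_u]=m_s[\eta](u)$ by Proposition \ref{CondExp} item 2.(d)), one gets $\mathbbm{E}^{s,\eta}[X_u|\mathcal{F}^o_{s'}](\omega)=\mathbbm{E}^{s',\omega}[X_u]=m_{s'}[\omega](u)$ for $\mathbbm{P}^{s,\eta}$-almost all $\omega$. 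Taking the $\mathbbm{E}^{s,\eta}$-expectation and using the tower property then yields
\[
m_s[\eta](u)=\mathbbm{E}^{s,\eta}\big[m_{s'}[X](u)\big],\qquad u\in[0,T],
\]
the integrand being $\mathbbm{P}^{s,\eta}$-integrable because $|m_{s'}[\omega](u)|\leq M_{op}\|\omega\|_{\infty}$ and $\|X\|_{\infty}\in\mathcal{L}^1(\mathbbm{P}^{s,\eta})$ (e.g. by Fernique's theorem for the Gaussian measure $\mathbbm{P}^{s,\eta}$ on the Banach space $(\Omega,\|\cdot\|_{\infty})$, cf. \cite{bogachevGauss}).

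Next I would subtract $m_s[\eta]$ from $m_{s'}[\eta]$ using the above identity and the linearity of the operator $m_{s'}$ on $\Omega_{s'}$ (with the convention $m_{s'}[\zeta]=m_{s'}[\zeta^{s'}]$ of Definition \ref{extension}): for every $u\in[0,T]$,
\[
m_{s'}[\eta](u)-m_s[\eta](u)=\mathbbm{E}^{s,\eta}\big[m_{s'}[\eta^{s'}-\omega^{s'}](u)\big],
\]
whence $|m_{s'}[\eta](u)-m_s[\eta](u)|\leq M_{op}\,\mathbbm{E}^{s,\eta}\big[\|\eta^{s'}-\omega^{s'}\|_{\infty}\big]$, and taking the supremum over $u$,
\[
\|m_{s'}[\eta]-m_s[\eta]\|_{\infty}\leq M_{op}\,\mathbbm{E}^{s,\eta}\big[\|\eta^{s'}-\omega^{s'}\|_{\infty}\big].
\]
Since $\mathbbm{P}^{s,\eta}(\omega^s=\eta^s)=1$ (Proposition \ref{CondExp} item 2.(a) together with Definition \ref{extension}), for $\mathbbm{P}^{s,\eta}$-almost all $\omega$ one has $\|\eta^{s'}-\omega^{s'}\|_{\infty}=\sup_{r\in[s,s']}|\eta(r)-\omega(r)|$.

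Finally I would let $s'\downarrow s$. For $\mathbbm{P}^{s,\eta}$-almost all $\omega$ the maps $r\mapsto\eta(r)$ and $r\mapsto\omega(r)$ are continuous and agree at $r=s$, so $\sup_{r\in[s,s']}|\eta(r)-\omega(r)|\to 0$; this quantity is dominated by $2\|\eta\|_{\infty}+2\|X\|_{\infty}\in\mathcal{L}^1(\mathbbm{P}^{s,\eta})$, so dominated convergence gives $\mathbbm{E}^{s,\eta}[\|\eta^{s'}-\omega^{s'}\|_{\infty}]\to 0$, hence $\|m_{s'}[\eta]-m_s[\eta]\|_{\infty}\to 0$, which is the asserted right-continuity of $s\mapsto m_s[\eta]$ in $(\Omega,\|\cdot\|_{\infty})$; evaluating at a fixed $t\in[0,T]$ gives the "in particular" part. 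The only genuinely delicate point is the rigorous passage from the canonical class identity \eqref{DE13} (in its conditional-expectation form of Remark \ref{C7Borel}) to the self-consistency relation $m_s[\eta]=\mathbbm{E}^{s,\eta}[m_{s'}[X]]$, which relies on the integrability supplied by the Gaussian/Fernique bound and on the compatibility of the extension conventions of Definition \ref{extension} with the linearity of $m_{s'}$; both are routine to verify.
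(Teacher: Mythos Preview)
Your argument is correct and takes a genuinely different route from the paper's.

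The paper's proof is purely operator-theoretic: it invokes item 2.(f) of Proposition \ref{CondExp}, namely that $\pi_sK\pi_s^*\Omega_s^*$ is dense in $\Omega_s$ and that $m_s\circ\pi_s$ is the identity on $K\pi_s^*\Omega_s^*\subset K\pi_{s+\delta}^*\Omega_{s+\delta}^*$. Given $\epsilon>0$ one picks $\eta_\epsilon\in K\pi_s^*\Omega_s^*$ with $\pi_s(\eta_\epsilon)$ close to $\pi_s(\eta)$; on $\eta_\epsilon$ both $m_s$ and $m_{s+\delta}$ act as the identity, so the term $m_{s+\delta}\pi_{s+\delta}(\eta_\epsilon)-m_s\pi_s(\eta_\epsilon)$ vanishes, and the remaining pieces are controlled by $M_{op}$ times quantities that are small either by the choice of $\eta_\epsilon$ or by the uniform convergence $\pi_{s+\delta}\to\pi_s$. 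No probability appears.

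Your proof instead exploits the canonical-class structure already established in Proposition \ref{ClassGauss}: the tower property yields $m_s[\eta]=\mathbbm{E}^{s,\eta}[m_{s'}[X]]$, and linearity plus the uniform bound $\|m_{s'}\|\leq M_{op}$ collapses the increment to $M_{op}\,\mathbbm{E}^{s,\eta}[\sup_{[s,s']}|\eta-\omega|]$, which tends to $0$ by continuity of paths and dominated convergence. This is shorter and gives a quantitative estimate (the modulus of right-continuity of $s\mapsto m_s[\eta]$ is bounded by $M_{op}$ times the expected oscillation of $X-\eta$ on $[s,s']$ under $\mathbbm{P}^{s,\eta}$). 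The price is a dependence on Proposition \ref{ClassGauss} and on an integrability input (Fernique, or equivalently the later Proposition \ref{supX}); the paper's argument avoids both and uses only the structural item 2.(f), which is why it is placed immediately after Proposition \ref{CondExp}. Since the proof of Proposition \ref{ClassGauss} does not invoke Proposition \ref{mcad}, there is no circularity in your approach.
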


\begin{corollary}\label{Coro_m}
$m:\begin{array}{rcl}
(s,\eta)&\longmapsto&m_s[\eta]\\
\,[0,T]\times\Omega&\longrightarrow&\Omega
\end{array}$ is $(\mathcal{P}ro^o,\mathcal{F})$-measurable.

Moreover, for all $t\in[0,T]$, $\begin{array}{rcl}
(s,\eta)&\longmapsto&m_s[\eta](t)\\
\,[0,T]\times\Omega&\longrightarrow&\mathbbm{R}^d
\end{array}$ is $\mathbbm{F}^o$-progressively measurable.
\end{corollary}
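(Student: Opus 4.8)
The plan is to prove that $m\colon(s,\eta)\mapsto m_s[\eta]$ is $\mathbbm{F}^o$-progressively measurable as an $\Omega$-valued map; since $(\mathcal{P}ro^o,\mathcal{F})$-measurability is, by the very definition of the progressive $\sigma$-field, equivalent to $\mathbbm{F}^o$-progressive measurability, and since for a fixed $t$ the map $(s,\eta)\mapsto m_s[\eta](t)=X_t(m_s[\eta])$ is the composition of $m$ with the continuous (hence Borel) evaluation $X_t\colon\Omega\to\mathbbm{R}^d$, both assertions will follow at once.

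First I would record the ``adaptedness'' of $m$: for every fixed $s\in[0,T]$, the map $\eta\mapsto m_s[\eta]$ is $\mathcal{F}^o_s/\mathcal{F}$-measurable. Indeed, by Definition \ref{extension} one has $m_s[\eta]=m_s[\eta^s]=m_s(\pi_s(\eta))$, and $\pi_s\colon\Omega\to\Omega_s$ is $\mathcal{F}^o_s/\mathcal{B}(\Omega_s)$-measurable because $X_r\circ\pi_s=X_{r\wedge s}$ is $\mathcal{F}^o_s$-measurable for every $r\in[0,T]$ and the Borel $\sigma$-field of $\Omega_s$ is generated by the coordinate maps; composing with the continuous operator $m_s\colon\Omega_s\to\Omega$ provided by Proposition \ref{CondExp} yields the claim.

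Then I would combine this adaptedness with the right-continuity in time from Proposition \ref{mcad}, via the standard argument that a right-continuous adapted map with values in a Polish space is progressively measurable. Fix $t\in[0,T]$. For $n\in\mathbbm{N}$ and $s\in[0,t]$ set $\varphi_n(s):=\min\{kt2^{-n}:\ 0\le k\le 2^n,\ kt2^{-n}\ge s\}$ and $m^n_s[\eta]:=m_{\varphi_n(s)}[\eta]$. On each of the finitely many dyadic subintervals of $[0,t]$ the function $s\mapsto\varphi_n(s)$ is constant, with a value $\le t$, so by the adaptedness just established the restriction of $(s,\eta)\mapsto m^n_s[\eta]$ to $[0,t]\times\Omega$ is $\mathcal{B}([0,t])\otimes\mathcal{F}^o_t/\mathcal{F}$-measurable. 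Since $\varphi_n(s)\downarrow s$ as $n\to\infty$, Proposition \ref{mcad} gives $m^n_s[\eta]\to m_s[\eta]$ in $(\Omega,\|\cdot\|_{\infty})$ for every $(s,\eta)$; $\Omega$ being a separable Banach space, the pointwise limit of measurable maps is measurable, hence the restriction of $m$ to $[0,t]\times\Omega$ is $\mathcal{B}([0,t])\otimes\mathcal{F}^o_t/\mathcal{F}$-measurable. As $t\in[0,T]$ was arbitrary, $m$ is $\mathbbm{F}^o$-progressively measurable, and the corollary follows as explained above.

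I do not expect a genuine obstacle here; the only points that deserve care are that the target $\Omega$ is infinite dimensional, so one should invoke its separability and completeness to use the stability of Borel measurability under pointwise limits and under composition with continuous maps, and that it is precisely the factorization $m_s[\eta]=m_s[\eta^s]$ of Definition \ref{extension} — not merely the continuity of $\eta\mapsto m_s[\eta]$ — that upgrades $\mathcal{F}$-measurability to the $\mathcal{F}^o_s$-measurability required to run the progressive-measurability argument.
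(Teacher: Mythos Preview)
Your proof is correct and follows the classical ``right-continuous $+$ adapted $\Rightarrow$ progressive'' scheme via dyadic approximation from the right, using separability of $\Omega$ to pass to the pointwise limit. The paper takes a slightly different route: it fixes $T_0$, works first on the restricted space $[0,T_0]\times\Omega_{T_0}$, and invokes a Carath\'eodory-type joint measurability result (Theorem 15, Chapter IV of Dellacherie--Meyer) to the effect that a map right-continuous in the time variable and continuous in the second variable is jointly measurable; it then pulls back to $[0,T_0]\times\Omega$ via the single projection $\pi_{T_0}$. Your argument is more self-contained (no external lemma) and makes the adaptedness at each fixed $s$ explicit via the factorization $m_s[\eta]=m_s[\pi_s(\eta)]$; the paper's argument is shorter because it outsources the approximation step to the cited theorem and uses only one projection $\pi_{T_0}$ rather than the family $\pi_s$. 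Both exploit exactly the same two ingredients (Proposition~\ref{mcad} and Definition~\ref{extension}), so the difference is one of packaging rather than substance.
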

\begin{proof}
Let $T_0\in[0,T]$.
$m:[0,T_0]\times\Omega_{T_0}\mapsto\Omega$ is right-continuous
in $s \in [0,T_0]$ at fixed $\eta \in \Omega_{T_0}$ by Proposition \ref{mcad} and continuous in $\eta$ at fixed $s$ hence jointly measurable, see Theorem 15 in \cite{dellmeyer75} Chapter IV.

For all $t\leq T_0$ and $\omega\in\Omega$, we have $m_t(\omega)=m_t\circ\pi_t(\omega)=m_t\circ\pi_{T_0}(\omega)$, taking into account Definition \ref{extension}.

So $m:[0,T_0]\times\Omega\mapsto\Omega$ can be expressed composing $m:[0,T_0]\times\Omega_{T_0}\mapsto\Omega$ with  
$$ \begin{array}{rcl}
(t,\omega)&\mapsto&(t,\pi_{T_0}(\omega))\\
\,[0,T_0]\times\Omega&\rightarrow& [0,T_0]\times\Omega_{T_0}
\end{array},$$
 which is clearly $\left(\mathcal{B}([0,T_0])\otimes\mathcal{F}^o_{T_0},\mathcal{B}([0,T_0])\otimes\mathcal{B}(\Omega_{T_0})\right)$-measurable. By composition, 
 $m:[0,T_0]\times\Omega\mapsto\Omega$ is $\mathcal{B}([0,T_0])\otimes\mathcal{F}^o_{T_0}$-measurable.  
Since this holds for all $T_0$, the first statement is shown.

The second part of the statement follows composing $m_s(\eta)$ 
and $X_t$  which is  continuous hence measurable  for all $t$.
\end{proof}

\begin{remark}\label{RPs}
	For every $(s,\eta)$, $X - m_s[\eta]$ is under $\mathbbm{P}^{s,\eta}$ a
 mean-zero continuous Gaussian process whose 
  covariance function does not depend on $\eta$, see Theorem 2 in \cite{lagatta}.
  
  In particular, if $s$ is fixed and if we consider two paths $\eta,\eta'$ in $\Omega$, then $\mathbbm{P}^{s,\eta'}$ is the translation of $\mathbbm{P}^{s,\eta}$ by the vector $m_s[\eta]- m_s[\eta']=m_s[\eta-\eta']$.
	\end{remark}
\begin{notation}\label{Ps}

	For every $(s,\eta)$, we denote by $c^s$ the  covariance function of 
 $X - m_s[\eta]$ under $\mathbbm{P}^{s,\eta}$. 
 We denote by $\mathbbm{P}^s$
 the law of that process, which does not depend on $\eta$.
The expectation under $\mathbbm{P}^s$ will be denoted by $\mathbbm{E}^s$.
\end{notation}

\begin{proposition}\label{supX}
For every $(s,\eta)\in[0,T]\times\Omega$ and $p\in\mathbbm{N}^*$, $\underset{r\in[0,T]}{\text{sup }}\|X_r\|\in\mathcal{L}^p(\mathbbm{P}^{s,\eta})$.
\end{proposition}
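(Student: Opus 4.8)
The plan is to reduce the statement for $\mathbbm{P}^{s,\eta}$ to the already-assumed integrability of $\sup_{r\in[0,T]}\|X_r\|$ under the base measure $\mathbbm{P}$ (Hypothesis \ref{HypGaussProba} item 1.), using the structural description of $\mathbbm{P}^{s,\eta}$ obtained so far. By Remark \ref{RPs} and Notation \ref{Ps}, under $\mathbbm{P}^{s,\eta}$ the canonical process decomposes as $X = (X - m_s[\eta]) + m_s[\eta]$, where $X - m_s[\eta]$ is a mean-zero continuous Gaussian process with law $\mathbbm{P}^s$ not depending on $\eta$, and $m_s[\eta]\in\Omega$ is a fixed (deterministic) path. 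Hence
\begin{equation*}
\underset{r\in[0,T]}{\text{sup }}\|X_r\| \le \underset{r\in[0,T]}{\text{sup }}\|X_r - m_s[\eta](r)\| + \|m_s[\eta]\|_\infty \quad \mathbbm{P}^{s,\eta}\text{-a.s.},
\end{equation*}
and since $\|m_s[\eta]\|_\infty$ is a finite constant (indeed $\|m_s[\eta]\|_\infty \le M_{op}\|\eta\|_\infty$ by Proposition \ref{CondExp} item 2.(e)), it suffices to prove $\underset{r\in[0,T]}{\text{sup }}\|X_r - m_s[\eta](r)\|\in\mathcal{L}^p(\mathbbm{P}^{s,\eta})$, i.e. that the $\mathcal{L}^p$-norm under $\mathbbm{P}^s$ of the supremum of the centered Gaussian process is finite for every $p$.

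For the centered part I would proceed in one of two equivalent ways. The cleanest is to transfer back to $\mathbbm{P}$: by the regular conditional probability property \eqref{regcondexp} (extended to all $(s,\eta)$ via Definition \ref{extension}), for any nonnegative random variable $Z$ one has $\mathbbm{E}[Z] = \int_{\Omega}\mathbbm{E}^{s,\eta}[Z]\,\mathbbm{P}(d\eta)$, so that $\mathbbm{E}^{s,\eta}\big[(\sup_r\|X_r\|)^p\big] < +\infty$ for $\mathbbm{P}$-almost every $\eta$; then, because $\mathbbm{P}^{s,\eta}$ depends on $\eta$ only through $m_s[\eta] = m_s[\eta^s]$ and $\mathbbm{P}^s$ (Remark \ref{RPs}) and $\eta\mapsto m_s[\eta]$ is continuous, the finiteness of the $\mathcal{L}^p(\mathbbm{P}^s)$-norm of $\sup_r\|X_r - m_s[\eta](r)\|$ — which by the triangle inequality above does not actually depend on $\eta$ — holds for one $\eta$ iff it holds for all, upgrading the $\mathbbm{P}$-a.e. statement to every $\eta\in\Omega$. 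Alternatively, and more self-containedly, one invokes the Fernique theorem (see \cite{bogachevGauss}): $\mathbbm{P}^s$ is a centered Gaussian measure on the Banach space $\Omega$, the map $\omega\mapsto\sup_{r\in[0,T]}\|X_r(\omega)\| = \|\omega\|_\infty$ is precisely the norm, and Fernique's theorem gives exponential integrability of the squared norm, hence finiteness of all $\mathcal{L}^p$-moments; one must only check that $\mathbbm{P}^s$ is genuinely a Gaussian measure on $\Omega$, which follows from it being the law of the continuous Gaussian process $X - m_s[\eta]$ under $\mathbbm{P}^{s,\eta}$, itself a Gaussian measure by Proposition \ref{CondExp} item 2.

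Putting these together: combining the triangle-inequality bound, the finiteness of $\|m_s[\eta]\|_\infty$, and the $\mathcal{L}^p(\mathbbm{P}^s)$-integrability of $\|\cdot\|_\infty$, we get $\mathbbm{E}^{s,\eta}\big[(\sup_{r\in[0,T]}\|X_r\|)^p\big]<+\infty$ for every $(s,\eta)\in[0,T]\times\Omega$ and every $p\in\mathbbm{N}^*$, which is the claim. I expect no serious obstacle here; the only point requiring a little care is the transfer of the $\mathbbm{P}$-a.e.\ statement in \eqref{regcondexp} to \emph{every} $\eta$ — resolved either by the $\eta$-independence of the centered law (the route via Remark \ref{RPs}) or by appealing directly to Fernique, which bypasses the issue altogether.
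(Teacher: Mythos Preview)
Your proposal is correct, and your Route A is essentially the paper's own argument: the paper also uses the translation relation $\mathbbm{E}^{s,\eta}[\|\omega\|_\infty^p]=\mathbbm{E}^{s}[\|\omega+m_s[\eta]\|_\infty^p]$, the triangle inequality, and the disintegration $\mathbbm{E}[\|\omega\|_\infty^p]=\mathbbm{E}\big[\mathbbm{E}^{s,\eta}[\|\omega\|_\infty^p]\big]$ together with Hypothesis \ref{HypGaussProba} item 1.\ to obtain finiteness for $\mathbbm{P}$-a.e.\ $\eta$, and then the $\eta$-independence of the centered law (Remark \ref{RPs}) to upgrade this to every $\eta$. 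The paper phrases the last step as a dichotomy (finite for all $\eta$ versus infinite for all $\eta$, via both an upper and a lower triangle-inequality bound), whereas you reduce directly to $\mathbbm{E}^s[\|\omega\|_\infty^p]<\infty$; this is a cosmetic difference.

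Your Route B via Fernique's theorem is a genuinely different and shorter alternative not used in the paper: once one knows $\mathbbm{P}^s$ is a centered Gaussian measure on the separable Banach space $\Omega$ (immediate from Proposition \ref{CondExp} item 2.\ and translation), Fernique gives all moments of $\|\cdot\|_\infty$ at once, bypassing the disintegration argument and Hypothesis \ref{HypGaussProba} item 1.\ entirely. The paper's route has the small advantage of being self-contained within the already-stated hypotheses; Fernique buys brevity at the cost of an external (though standard) reference.
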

\begin{proof}
We fix $s\in[0,T]$ and $p\in\mathbbm{N}^*$.
We start by noticing that for every $\eta\in\Omega$, 
\begin{equation}\label{EqSupX1}
\mathbbm{E}^{s,\eta}[\|\omega\|_{\infty}^p] = \mathbbm{E}^{s}[\|\omega+m_s[\eta]\|_{\infty}^p].
\end{equation}
 Then by triangle inequality for $\|\cdot\|_{\infty}$ and convexity of $x\mapsto x^p$, we can write 
\begin{equation}\label{EqSupX2}
\|\omega+m_s[\eta]\|_{\infty}^p\leq 2^{p-1}\|\omega\|_{\infty}^p+2^{p-1}\|m_s[\eta]\|_{\infty}^p.
\end{equation}
Since $\|\omega\|_{\infty}^p=\|-m_s[\eta]+(\omega+m_s[\eta])\|_{\infty}^p\leq 2^{p-1}\|m_s[\eta]\|_{\infty}^p+2^{p-1}\|\omega+m_s[\eta]\|_{\infty}^p$ then
\begin{equation}\label{EqSupX3}
\frac{1}{2^{p-1}}\|\omega\|_{\infty}^p-\|m_s[\eta]\|_{\infty}^p\leq \|\omega+m_s[\eta]\|_{\infty}^p.
\end{equation}
Taking the expectation $\mathbbm{E}^{s}$ in  \eqref{EqSupX2}, \eqref{EqSupX3} and taking \eqref{EqSupX1} into account yields
\begin{equation}
\frac{1}{2^{p-1}}\mathbbm{E}^{s}[\|\omega\|_{\infty}^p]-\left\|m_s[\eta]\right\|_{\infty}^p\leq \mathbbm{E}^{s,\eta}[\|\omega\|_{\infty}^p]\leq 2^{p-1}\mathbbm{E}^{s}[\|\omega\|_{\infty}^p] +2^{p-1}\left\|m_s[\eta]\right\|_{\infty}^p.
\end{equation}
So either $\mathbbm{E}^{s}[\|\omega\|_{\infty}^p]$ is finite and therefore $\mathbbm{E}^{s,\eta}[\|\omega\|_{\infty}^p]$ is finite for all $\eta$, or $\mathbbm{E}^{s}[\|\omega\|_{\infty}^p]$ is infinite and therefore $\mathbbm{E}^{s,\eta}[\|\omega\|_{\infty}^p]$ is infinite for all $\eta$. We now show that the second option is not possible in order to conclude. Indeed, by Remark \ref{C7Borel} we have
\begin{equation}
\begin{array}{rcl}
\mathbbm{E}\left[\|\omega\|_{\infty}^p\right]&=&\mathbbm{E}\left[\mathbbm{E}\left[\|\omega\|_{\infty}^p|\mathcal{F}^o_s\right](\eta)\right]\\
&=&\mathbbm{E}\left[\mathbbm{E}^{s,\eta}\left[\|\omega\|_{\infty}^p\right]\right],
\end{array}
\end{equation}
where we recall in particular that $\mathbbm{E}\left[\|\omega\|_{\infty}^p\right]<+\infty$ by Hypothesis \ref{HypGaussProba}, so $\mathbbm{E}^{s,\eta}[\|\omega\|_{\infty}^p$ is finite for $\mathbbm{P}$ almost all $\eta$ hence is not infinite for all $\eta$.

\end{proof}
The proposition below is proved in the Appendix. 
\begin{proposition}\label{FullSupp}
	For every $(s,\eta)\in[0,T]\times \Omega$, the topological support of $\mathbbm{P}^{s,\eta}$ is equal to $\eta^s +\Omega_s^{\perp}$ i.e. the set of paths coinciding with $\eta$ on $[0,s]$. 
\end{proposition}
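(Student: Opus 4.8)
The plan is to identify the support of $\mathbbm{P}^{s,\eta}$ with $\eta^s + \Omega_s^{\perp}$ by combining the known full support of the ``unconditioned'' process with the translation structure of $\mathbbm{P}^{s,\eta}$. First I would reduce to the centered case: by Remark \ref{RPs}, under $\mathbbm{P}^{s,\eta}$ the canonical process $X$ has the law of $m_s[\eta] + W$, where $W$ has law $\mathbbm{P}^s$ (the centered Gaussian law of $X - m_s[\eta]$, independent of $\eta$ by Notation \ref{Ps}). Since translation by the fixed element $m_s[\eta] \in \Omega$ is a homeomorphism of $\Omega$, the support of $\mathbbm{P}^{s,\eta}$ is $m_s[\eta] + \mathrm{supp}(\mathbbm{P}^s)$. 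By Proposition \ref{CondExp} item 2.(a) we have $\mathbbm{P}^{s,\eta}(\omega^s = \eta^s) = 1$; equivalently, under $\mathbbm{P}^s$ the process $W$ vanishes on $[0,s]$ a.s., so $\mathrm{supp}(\mathbbm{P}^s) \subseteq \Omega_s^{\perp}$, and $m_s[\eta] - m_s[\eta]^s = m_s[\eta] - \eta^s$ lies in $\Omega_s^{\perp}$ (again using $m_s[\eta]^s = \eta^s$). Hence $m_s[\eta] + \mathrm{supp}(\mathbbm{P}^s) = \eta^s + \big((m_s[\eta]-\eta^s) + \mathrm{supp}(\mathbbm{P}^s)\big) \subseteq \eta^s + \Omega_s^{\perp}$, giving one inclusion.

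For the reverse inclusion it suffices to show $\mathrm{supp}(\mathbbm{P}^s) \supseteq \Omega_s^{\perp}$, equivalently that the centered Gaussian measure $\mathbbm{P}^s$, viewed as a measure on the closed subspace $\Omega_s^{\perp}$, has full support there. By Remark \ref{R35} (last bullet, i.e. Theorem 3.6.1 in \cite{bogachevGauss}), this is equivalent to the density in $\Omega_s^{\perp}$ of the Cameron--Martin space $\mathcal{H}(\mathbbm{P}^s)$. So the task is to prove that $\mathcal{H}(\mathbbm{P}^s)$ is dense in $(\Omega_s^{\perp}, \|\cdot\|_\infty)$. Here I would use the representation \eqref{Repk}: under $\mathbbm{P}^s$ one has (after the time shift) $W_t = \int_s^t k(t,r)\, dB_r$ for $t \ge s$, so the Cameron--Martin elements of $\mathbbm{P}^s$ are exactly the functions $t \mapsto \int_s^t k(t,r) h(r)\, dr$ for $h \in L^2([s,T])$; the non-degeneracy assumption, Hypothesis \ref{HypGaussProba} item 4 (and item 3, the representability), ensures this map $h \mapsto \int_\cdot k(\cdot,r)h(r)\,dr$ is injective and is the mechanism forcing density — this is precisely the kind of argument already invoked in Remark \ref{RemHypGaussProba} item 2 via \cite{HidaCanRep}. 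Concretely, $H^t(\mathbbm{P})$ coincides with the closure of the span of the $B^i_r$, $r \le t$, and pulling this through the isometry between $H(\mathbbm{P}^s)$, $\mathcal{H}(\mathbbm{P}^s)$ and the Brownian RKHS restricted to $[s,T]$ yields the claimed density.

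Alternatively, and perhaps more cleanly, one can avoid recomputing the Cameron--Martin space of $\mathbbm{P}^s$ by relating it to that of $\mathbbm{P}$: since $\mathbbm{P}$ has full support on $\Omega$ (Hypothesis \ref{HypGaussProba} item 2), $\mathcal{H}(\mathbbm{P})$ is dense in $\Omega$; one then shows $\pi_s^\perp(\mathcal{H}(\mathbbm{P})) \subseteq \mathcal{H}(\mathbbm{P}^s)$ (projecting a Cameron--Martin path of the whole process gives a Cameron--Martin path of the conditioned centered process), and that $\pi_s^\perp$ maps a dense subset of $\Omega$ onto a dense subset of $\Omega_s^{\perp}$, since $\pi_s^\perp$ is a continuous surjection with $\pi_s^\perp|_{\Omega_s^\perp} = \mathrm{id}$. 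Combining the two inclusions finishes the proof. The main obstacle I anticipate is the precise description of $\mathcal{H}(\mathbbm{P}^s)$ (or the verification $\pi_s^\perp(\mathcal{H}(\mathbbm{P})) \subseteq \mathcal{H}(\mathbbm{P}^s)$ with the correct inner product) and checking that the non-degeneracy Hypothesis \ref{HypGaussProba} item 4 is exactly what guarantees this space is large enough to be dense; the translation and homeomorphism bookkeeping is routine once that core density fact is in hand.
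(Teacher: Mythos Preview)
Your reduction to the centered measure $\mathbbm{P}^s$ and the translation bookkeeping are fine, and you correctly isolate the crux: showing that $\mathrm{supp}(\mathbbm{P}^s)=\Omega_s^{\perp}$, equivalently that $\mathcal{H}(\mathbbm{P}^s)$ is dense in $\Omega_s^{\perp}$. However, neither of the two routes you sketch for this density actually goes through.

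For route (a), you identify $\mathcal{H}(\mathbbm{P}^s)$ correctly as the range of $h\mapsto \big(t\mapsto\int_s^t k(t,r)h(r)\,dr\big)$ on $L^2([s,T])$, but then invoke Hypothesis~\ref{HypGaussProba} item~4 as ``the mechanism forcing density''. Item~4 is an \emph{injectivity} statement for this operator; density of the range is equivalent to injectivity of the \emph{adjoint}, i.e.\ to the implication ``$\int_{[r,T]} k(t,r)\,d\mu(t)=0$ for a.e.\ $r\in[s,T]$ $\Rightarrow$ $\mu=0$'' for $\mu\in(\Omega_s^{\perp})^*$. Nothing in the hypotheses gives you this directly. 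For route (b), the inclusion $\pi_s^{\perp}(\mathcal{H}(\mathbbm{P}))\subseteq\mathcal{H}(\mathbbm{P}^s)$ is false in general: if $\psi(t)=\int_0^t k(t,r)h(r)\,dr\in\mathcal{H}(\mathbbm{P})$ then for $t\ge s$,
\[
\pi_s^{\perp}(\psi)(t)=\int_s^t k(t,r)h(r)\,dr+\int_0^s\big(k(t,r)-k(s,r)\big)h(r)\,dr,
\]
and the second integral is not of the form $\int_s^t k(t,r)g(r)\,dr$ unless $k(t,\cdot)=k(s,\cdot)$ on $[0,s]$, which fails e.g.\ for fractional Brownian motion. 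So you have reduced the proposition to an equivalent statement but not actually proved it.

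The paper's proof takes a completely different tack and avoids Cameron--Martin considerations altogether. It argues by contradiction: assume some relatively open ball $\eta_0^s+B(\zeta,\delta)\cap\Omega_s^{\perp}$ has $\mathbbm{P}^{s,\eta_0}$-measure zero. Using the translation structure (Remark~\ref{RPs}) one shows that for every $\eta$ with $\|m_s[\eta]-m_s[\eta_0]\|_\infty<\delta/2$ the set $(\pi_s^{\perp})^{-1}(B(\zeta,\delta/2))$ still has $\mathbbm{P}^{s,\eta}$-measure zero. Then, using the disintegration $\mathbbm{P}(F\mid\mathcal{F}^o_s)(\eta)=\mathbbm{P}^{s,\eta}(F)$ from Proposition~\ref{CondExp}~2.(c), one integrates over $\eta$ to conclude that the nonempty open set $m_s^{-1}\big(B(m_s[\eta_0],\delta/2)\big)\cap(\pi_s^{\perp})^{-1}\big(B(\zeta,\delta)\big)$ has $\mathbbm{P}$-measure zero, contradicting the full support of $\mathbbm{P}$. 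The key idea you are missing is precisely this use of the conditioning relation to lift a ``hole'' in $\mathrm{supp}(\mathbbm{P}^{s,\eta_0})$ to a hole in $\mathrm{supp}(\mathbbm{P})$.
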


We recall that the covariance functions $c^s$ have been defined at Notation 
 \ref{Ps}.
\begin{lemma}\label{Cs}
	For every $s,t,u\in[0,T]$, 
	\begin{equation}
	c^s(t,u)=\left\{\begin{array}{l}
	\int_s^{t\wedge u}k(t,r)k(u,r)dr\quad \text{if }s\leq t,u\\
	0\quad \text{otherwise.}
	\end{array}\right.
	\end{equation}
\end{lemma}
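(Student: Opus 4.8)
The goal is to compute the covariance function $c^s$ of the process $X - m_s[\eta]$ under $\mathbbm{P}^{s,\eta}$, using the representation $X_t = \int_0^t k(t,r)\,dB_r$ available under $\mathbbm{P}$ (Hypothesis \ref{HypGaussProba} item 3) together with the description of $\mathbbm{P}^{s,\eta}$ as a regular conditional law of $\mathbbm{P}$ given $\mathcal{F}^o_s$ (Proposition \ref{CondExp}, item 2.(c)). By Remark \ref{RPs} this covariance does not depend on $\eta$, so I am free to pick any convenient $\eta$, or equivalently to work directly with the conditional distribution. The heuristic is clear: the representation gives $X_t = \int_0^s k(t,r)\,dB_r + \int_s^t k(t,r)\,dB_r$ for $t \geq s$, the first integral is $\mathcal{F}^o_s$-measurable and the second is independent of $\mathcal{F}^o_s$ with the same law under $\mathbbm{P}^{s,\eta}$ as under $\mathbbm{P}$ (by Remark \ref{RemHypGaussProba} item 2, $H^s(\mathbbm{P})$ is the $L^2$-closure of the span of $\{B^i_r : r \le s\}$, so conditioning on $\mathcal{F}^o_s$ is conditioning on the Brownian increments up to $s$). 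Hence, conditionally, $X_t - m_s[\eta](t)$ equals $\int_s^t k(t,r)\,dB_r$ in law, and the Itô isometry gives $c^s(t,u) = \int_s^{t\wedge u} k(t,r)k(u,r)^\top\,dr$ for $s \le t,u$; when $s > t$ (say), $X_t$ is $\mathcal{F}^o_s$-measurable hence $\mathbbm{P}^{s,\eta}$-a.s.\ constant (Proposition \ref{CoroTrivial} / the full-support description of Proposition \ref{FullSupp} forces $X_t = \eta(t)$), so $X_t - m_s[\eta](t) = 0$ a.s., giving the second case.

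\textbf{Steps in order.} First, I would fix $s$ and, invoking Remark \ref{RPs}, reduce to computing the covariance of the centered Gaussian process $X - m_s[\eta]$; by $\eta$-independence I may as well compute it via the conditional-expectation characterization \eqref{regcondexp}, i.e.\ for a.e.\ $\eta$ under $\mathbbm{P}$ the law $\mathbbm{P}^{s,\eta}$ is the regular conditional law of $X$ given $\mathcal{F}^o_s$. Second, using the representation \eqref{Repk} and splitting the stochastic integral at time $s$, I would identify, for $t \ge s$, the $\mathcal{F}^o_s$-measurable part $\int_0^s k(t,r)\,dB_r$ (which, modulo the $\mathbbm{P}$-a.s.\ equality of $\mathcal{F}^o_s$ and the $B$-generated $\sigma$-field on $[0,s]$ from Remark \ref{RemHypGaussProba} item 2, is exactly $\mathbbm{E}^{s,\eta}[X_t]=m_s[\eta](t)$) and the independent complement $\int_s^t k(t,r)\,dB_r$. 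Third, for $t,u \ge s$ I would compute, under the conditional law, $\mathbbm{E}^{s,\eta}[(X_t - m_s[\eta](t))(X_u-m_s[\eta](u))^\top] = \mathbbm{E}\big[\int_s^t k(t,r)\,dB_r \big(\int_s^u k(u,r)\,dB_r\big)^\top\big]$ and evaluate it by the Itô isometry as $\int_s^{t\wedge u} k(t,r)k(u,r)^\top\,dr$ (using that the $d$ components of $B$ are independent standard, so the cross-covariance of $dB_r$ is $I_d\,dr$). Fourth, for the case $s > t \wedge u$: if, say, $t < s$ then $X_t$ is $\mathcal{F}^o_s$-measurable, hence $\mathbbm{P}^{s,\eta}$-a.s.\ equal to its mean $m_s[\eta](t)$ by Proposition \ref{CoroTrivial}, so the corresponding entry of $c^s$ vanishes; this covers all remaining cases. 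Finally, I would note that the componentwise statement $c^s_{i,j}(t,u) = \int_s^{t\wedge u} k_i(t,r)\cdot k_j(u,r)\,dr$ in terms of the columns $k_i$ (Notation \ref{Npis}) is just the matrix identity written out, matching the claimed formula.

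\textbf{Main obstacle.} The one genuinely delicate point is justifying rigorously that conditioning on $\mathcal{F}^o_s$ leaves the future Brownian increments $(B_r - B_s)_{r \in [s,T]}$ untouched in law and independent of the past — i.e.\ that the "splitting of the Itô integral at $s$" is legitimate under the conditional measure. This rests on Remark \ref{RemHypGaussProba} item 2 (which identifies $H^s(\mathbbm{P})$ with the $L^2$-closure of the $B$-increments up to $s$, so that $\mathcal{F}^o_s$ and $\sigma(B_r : r\le s)$ agree modulo $\mathbbm{P}$-null sets) together with the Gaussian structure, and it should be spelled out carefully; once it is granted, the Itô isometry computation is routine. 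A secondary, minor technical matter is ensuring that the stochastic integrals $\int_s^t k(t,r)\,dB_r$ are well defined, which follows from $k(t,\cdot) \in L^2([0,t])$ in the definition of the representation, and that the a.e.-$\eta$ identity of covariances upgrades to all $\eta$, which is exactly the content of Remark \ref{RPs}.
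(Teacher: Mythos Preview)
Your proposal is correct and follows essentially the same route as the paper: both split the It\^o integral at time $s$, use the regular-conditional-law identity \eqref{regcondexp} to reduce $c^s(t,u)$ to a $\mathbbm{P}$-conditional covariance, and then apply the It\^o isometry to the post-$s$ increments, with the degenerate case $t\wedge u<s$ handled by the fact that $X_t$ (or $X_u$) is $\mathbbm{P}^{s,\eta}$-a.s.\ deterministic. The paper invokes Proposition \ref{CondExp} 2.(a) directly for that degenerate case rather than Proposition \ref{CoroTrivial}/\ref{FullSupp}, and is slightly more terse about why $\int_s^t k(t,r)\,dB_r$ is independent of $\mathcal{F}^o_s$ (you rightly flag this as the one point needing care, and your appeal to Remark \ref{RemHypGaussProba} item 2 is exactly the right justification).
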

\begin{proof}
	We fix $t,u$. For every $\eta\in\Omega$, we have 
	\begin{equation}\label{Eq7}
	c^s(t,u)=\mathbbm{E}^{s,\eta}[X_t\otimes X_u]-\mathbbm{E}^{s,\eta}[X_t]\otimes\mathbbm{E}^{s,\eta}[X_u].
	\end{equation}
	Clearly if $t$ (resp. $u$) is inferior to $s$ then $X_t$ 
(resp. $X_u$) is for all $\eta$  $\mathbbm{P}^{s,\eta}$ a.s. deterministic, 
see 
 Proposition \ref{CondExp} 2. (a).
 This implies $c^s(t,u)=0$. Assume now that $s\leq t,u$.
	By Proposition \ref{CondExp} 2. (c) and
 \eqref{Eq7} we have $\mathbbm{P}$ a.s. that, 
	\begin{equation}
	\begin{array}{rcl}

	c^s(t,u)&=&\mathbbm{E}[ X_t\otimes X_u|\mathcal{F}^o_s]-\mathbbm{E}[X_t|\mathcal{F}^o_s]\otimes\mathbbm{E}[X_u|\mathcal{F}^o_s] \\
	&=&\mathbbm{E}\left[\left(\int_0^tk(t,r)dB_r\right)\otimes\left(\int_0^uk(u,r)dB_r\right)|\mathcal{F}^o_s\right]\\
	&&-\mathbbm{E}[\int_0^tk(t,r)dB_r|\mathcal{F}^o_s]\otimes\mathbbm{E}[\int_0^tk(t,r)dB_r|\mathcal{F}^o_s]\\
	&=&\left(\int_0^sk(t,r)dB_r\right)\otimes\left(\int_0^sk(t,r)dB_r\right) + \int_s^{t\wedge u}k(t,r)k(u,r)dr\\ &&-\left(\int_0^sk(t,r)dB_r\right)\otimes\left(\int_0^sk(t,r)dB_r\right)\\
	&=&\int_s^{t\wedge u}k(t,r)k(u,r)dr,
	\end{array}
	\end{equation}
	and the proof is complete.
\end{proof}
The proof of the proposition below is also located in the Appendix.
\begin{proposition}\label{Prog}
$(\mathbbm{P}^{s,\eta})_{(s,\eta)\in[0,T]\times\Omega}$ is progressive, see Definition \ref{DefCondSyst}.
\end{proposition}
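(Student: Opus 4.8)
The plan is to exploit the translation structure of the measures $\mathbbm{P}^{s,\eta}$ recorded in Remark \ref{RPs}, together with the joint measurability of the mean random field from Corollary \ref{Coro_m}. Writing $\tau_x:y\mapsto y+x$ for $x\in\Omega$, Remark \ref{RPs} and Notation \ref{Ps} say precisely that $\mathbbm{P}^{t,\omega}=(\tau_{m_t[\omega]})_*\mathbbm{P}^t$. Introduce $\Theta:\mathcal{P}(\Omega)\times\Omega\to\mathcal{P}(\Omega)$, $\Theta(\mu,x):=(\tau_x)_*\mu$, so that $\mathbbm{P}^{t,\omega}=\Theta(\mathbbm{P}^t,m_t[\omega])$. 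Since for every $F\in\mathcal{F}$ the evaluation $\mu\mapsto\mu(F)$ is Borel on $(\mathcal{P}(\Omega),\text{weak})$ — a standard fact for the Polish space $\Omega$, obtained by a $\pi$--$\lambda$ argument starting from lower semicontinuity of $\mu\mapsto\mu(G)$ on open $G$ — it suffices to show that $(t,\omega)\mapsto\mathbbm{P}^{t,\omega}$ is $\mathbbm{F}^o$-progressively measurable as a map into $(\mathcal{P}(\Omega),\text{weak})$; composing with $\mu\mapsto\mu(F)$ then yields progressivity of $(t,\omega)\mapsto\mathbbm{P}^{t,\omega}(F)$ for each $F$, which is the definition. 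To get the progressive measurability of $(t,\omega)\mapsto\mathbbm{P}^{t,\omega}$, I would (i) check that $\Theta$ is Borel, and (ii) check that $(t,\omega)\mapsto(\mathbbm{P}^t,m_t[\omega])$ is $\mathbbm{F}^o$-progressively measurable into $\mathcal{P}(\Omega)\times\Omega$. For (i), $\Theta$ is in fact continuous: if $\mu_n\to\mu$ weakly and $x_n\to x$ in $\Omega$, then for $g\in C_b(\Omega)$ one writes $\int g\,d\Theta(\mu_n,x_n)-\int g\,d\Theta(\mu,x)=\int(g\circ\tau_{x_n}-g\circ\tau_x)\,d\mu_n+\big(\int g\circ\tau_x\,d\mu_n-\int g\circ\tau_x\,d\mu\big)$; the second term tends to $0$ since $g\circ\tau_x\in C_b(\Omega)$, and the first tends to $0$ by splitting the integral over a compact $K$ with $\sup_n\mu_n(K^c)<\varepsilon$ (tightness of the weakly convergent sequence) and using uniform continuity of $g$ on $K$.

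For (ii), Corollary \ref{Coro_m} gives that $(t,\omega)\mapsto m_t[\omega]$ is $(\mathcal{P}ro^o,\mathcal{F})$-measurable, i.e. $\mathbbm{F}^o$-progressively measurable into $(\Omega,\|\cdot\|_\infty)$ since $\mathcal{F}=\mathcal{B}(\Omega)$. As $\mathbbm{P}^t$ does not depend on $\omega$, it remains to prove that $t\mapsto\mathbbm{P}^t$ is Borel from $[0,T]$ to $(\mathcal{P}(\Omega),\text{weak})$; then on each $[0,s]\times\Omega$ the pair is $\mathcal{B}([0,s])\otimes\mathcal{F}^o_s$-measurable into the product $\sigma$-field, which is the Borel $\sigma$-field of $\mathcal{P}(\Omega)\times\Omega$ by separability. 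By Remark \ref{RPs} and Notation \ref{Ps}, $\mathbbm{P}^t$ is the centered Gaussian law with covariance $c^t$, and by Lemma \ref{Cs}, $c^t(u,v)=\mathbbm{1}_{\{t\le u\wedge v\}}\int_t^{u\wedge v}k(u,r)k(v,r)\,dr$, which for fixed $u,v$ is continuous in $t$ by absolute continuity of the Lebesgue integral ($k(u,\cdot)k(v,\cdot)\in L^1([0,T])$ since each factor is in $L^2$). Hence for any $u_1,\dots,u_n\in[0,T]$ the law of $(X_{u_1},\dots,X_{u_n})$ under $\mathbbm{P}^t$ is a centered Gaussian on $\mathbbm{R}^{nd}$ whose covariance matrix varies continuously with $t$, so it varies weakly continuously with $t$; therefore $t\mapsto\mathbbm{P}^t(F)$ is Borel for every cylinder $F$. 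Since the cylinders form a $\pi$-system generating $\mathcal{F}$ and $\{F\in\mathcal{F}:t\mapsto\mathbbm{P}^t(F)\text{ Borel}\}$ is a $\lambda$-system (stable under complements and increasing limits), Dynkin's theorem gives that $t\mapsto\mathbbm{P}^t(F)$ is Borel for all $F\in\mathcal{F}$, i.e. $t\mapsto\mathbbm{P}^t$ is Borel into $(\mathcal{P}(\Omega),\text{weak})$.

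Assembling, $(t,\omega)\mapsto\mathbbm{P}^{t,\omega}=\Theta(\mathbbm{P}^t,m_t[\omega])$ is the composition of the $\mathbbm{F}^o$-progressively measurable map of (ii) with the Borel map $\Theta$ of (i), hence is $\mathbbm{F}^o$-progressively measurable into $(\mathcal{P}(\Omega),\text{weak})$; composing once more with the Borel map $\mu\mapsto\mu(F)$ gives, for each $F\in\mathcal{F}$, that $(t,\omega)\mapsto\mathbbm{P}^{t,\omega}(F)$ is $\mathbbm{F}^o$-progressively measurable, which is exactly progressivity of the path-dependent canonical class. I expect the main obstacle to be step (ii), and within it the passage from the elementary continuity of the finite-dimensional covariances $t\mapsto c^t(u,v)$ to the Borel measurability of the infinite-dimensional law $t\mapsto\mathbbm{P}^t\in\mathcal{P}(\Omega)$, where the $\pi$--$\lambda$ bookkeeping and the identification of the Borel $\sigma$-field of $(\mathcal{P}(\Omega),\text{weak})$ with the one generated by the evaluations $\mu\mapsto\mu(F)$ must be handled carefully; the continuity of the shift operator $\Theta$ in (i) is routine but still requires the tightness argument noted above.
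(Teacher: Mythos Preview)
Your argument is correct and takes a genuinely different route from the paper's proof. The paper works directly with characteristic functions: it fixes a finite tuple $(t_1,\dots,t_n)$ and coordinates $(i_1,\dots,i_n)$, writes
\[
\mathbbm{E}^{s,\eta}\Big[e^{i\sum_j\lambda_j X^{i_j}_{t_j}}\Big]=e^{i(\lambda,\mu_{s,\eta})-\tfrac{1}{2}(\lambda,\Sigma_s\lambda)},
\]
observes that $(s,\eta)\mapsto\mu_{s,\eta}$ is $\mathbbm{F}^o$-progressive by Corollary \ref{Coro_m} and that $s\mapsto\Sigma_s$ is continuous by Lemma \ref{Cs}, and then invokes the functional monotone class theorem over the $\pi$-system of exponential r.v.'s. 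Your proof instead factors $\mathbbm{P}^{t,\omega}=\Theta(\mathbbm{P}^t,m_t[\omega])$ through the centered law and the translation, proves that $\Theta$ is continuous on $\mathcal{P}(\Omega)\times\Omega$, and reduces the problem to the Borel measurability of $t\mapsto\mathbbm{P}^t$ (via cylinders and a $\pi$--$\lambda$ argument) together with the already established progressivity of $m$. Both proofs ultimately rest on the same two ingredients, Corollary \ref{Coro_m} and Lemma \ref{Cs}; the paper's version is shorter because the exponential class collapses the Gaussian structure into one explicit formula, while yours is more modular and makes the role of the shift $\mathbbm{P}^{s,\eta}=\mathbbm{P}^s\ast\delta_{m_s[\eta]}$ transparent.

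One small point: in your continuity argument for $\Theta$, the phrase ``uniform continuity of $g$ on $K$'' is not quite what you use, since you need to control $g(y+x_n)-g(y+x)$ for $y\in K$, i.e.\ values of $g$ on the translated sets $K+x_n$. The clean fix is either to test weak convergence against bounded Lipschitz $g$ (for which $\sup_{y}|g(y+x_n)-g(y+x)|\le L\|x_n-x\|$), or to run the compactness-of-$K$ subsequence argument: if the supremum failed to vanish, extract $y_n\to y_0$ in $K$ and contradict continuity of $g$ at $y_0+x$.
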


\begin{notation}\label{CompletedBasis}
	For any $(s,\eta)\in[0,T]\times \Omega$ we will consider the  stochastic basis $\left(\Omega,\mathcal{F}^{s,\eta},\mathbbm{F}^{s,\eta}:=(\mathcal{F}^{s,\eta}_t)_{t\in[0,T]},\mathbbm{P}^{s,\eta}\right)$ where $\mathcal{F}^{s,\eta}$ (resp. $\mathcal{F}^{s,\eta}_t$ for all $t$) is $\mathcal{F}$ (resp. $\mathcal{F}_t$) augmented with  the $\mathbbm{P}^{s,\eta}$ negligible sets. $\mathbbm{P}^{s,\eta}$ is extended to $\mathcal{F}^{s,\eta}$.
\end{notation}
We remark that, for any $(s,\eta)\in[0,T]\times \Omega$, $\left(\Omega,\mathcal{F}^{s,\eta},\mathbbm{F}^{s,\eta},\mathbbm{P}^{s,\eta}\right)$ is a stochastic basis fulfilling the usual conditions, see 1.4 in \cite{jacod}
Chapter I. 

\begin{proposition}\label{FiltRichtCont}
Let $X^{s,\eta}$ denote the process $X-m_s[\eta]$. Then there exists an $\mathbbm{R}^d$-valued $(\mathbbm{P}^{s,\eta},\mathbbm{F}^{s,\eta})$-Brownian motion
 $B^{s,\eta}$ starting in $s$ such that for all $t\geq s$, $X^{s,\eta}_t=\int_s^t k(t,r)dB^{s,\eta}_r$ $\mathbbm{P}^{s,\eta}$ a.s.
Moreover, for all $t\geq s$,
then $\mathcal{F}^{s,\eta}_t$, coincides with
 $\mathcal{F}^o_t$ augmented with $\mathbbm{P}^{s,\eta}$-null sets.

\end{proposition}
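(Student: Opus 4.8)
The plan is to construct the Brownian motion $B^{s,\eta}$ directly as a stochastic integral against $X^{s,\eta}=X-m_s[\eta]$ and to deduce the filtration statement from the representation. First I would fix $(s,\eta)$ and recall from Remark \ref{RPs} that under $\mathbbm{P}^{s,\eta}$ the process $X^{s,\eta}$ is a centered continuous Gaussian process whose covariance $c^s$ does not depend on $\eta$, and that by Lemma \ref{Cs} we have $c^s(t,u)=\int_s^{t\wedge u}k(t,r)k(u,r)dr$ for $s\le t,u$ (and $0$ otherwise). In particular $X^{s,\eta}$ is constant on $[0,s]$, so all the action happens on $[s,T]$. The idea is that this covariance structure says precisely that $X^{s,\eta}$ is the Volterra process with kernel $k$ built from a Brownian motion on $[s,T]$; one recovers that Brownian motion by inverting the Volterra operator, exactly as in the unconditional representation \eqref{Repk} (Hypothesis \ref{HypGaussProba} item 3), combined with the nondegeneracy item 4 of Hypothesis \ref{HypGaussProba} which guarantees the kernel can be inverted. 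Concretely I would argue that the Gaussian space $H^s$ generated by $\{X^{s,\eta}_t-X^{s,\eta}_s : t\in[s,T]\}$ coincides with $L^2([s,T],\mathbbm{R}^d)$ transported through the isometry induced by $k$, and define $B^{s,\eta}$ on $[s,T]$ via the inverse of this isometry (setting $B^{s,\eta}\equiv 0$ on $[0,s]$); its increments are then jointly Gaussian with covariance $(t\wedge u - s)\mathrm{Id}$, so $B^{s,\eta}$ is a $d$-dimensional Brownian motion starting at time $s$, and by construction $X^{s,\eta}_t=\int_s^t k(t,r)\,dB^{s,\eta}_r$ $\mathbbm{P}^{s,\eta}$-a.s. for every $t\ge s$. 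Adaptedness of $B^{s,\eta}$ to $\mathbbm{F}^{s,\eta}$ follows because each $B^{s,\eta}_t$ lies in the closure (in $L^2(\mathbbm{P}^{s,\eta})$) of the span of $\{X^{s,\eta}_u : u\le t\}$, using that the Volterra structure of $k$ makes the inversion causal (the value $B^{s,\eta}_t$ depends only on $X^{s,\eta}$ up to time $t$), and one then takes a continuous modification.

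For the filtration statement, let $\mathcal{G}_t$ denote $\mathcal{F}^o_t$ augmented with the $\mathbbm{P}^{s,\eta}$-null sets; I must show $\mathcal{F}^{s,\eta}_t=\mathcal{G}_t$ for $t\ge s$. The inclusion $\mathcal{G}_t\subseteq\mathcal{F}^{s,\eta}_t$ is immediate since $\mathcal{F}^o_t\subseteq\mathcal{F}_t$. For the reverse inclusion, the point is that the augmented filtration of a continuous process is right-continuous; more precisely, since $m_s[\eta]$ is deterministic, $\mathcal{F}^{s,\eta}_t$ is also the augmentation of $\sigma(X^{s,\eta}_u:u\le \cdot)$, and $X^{s,\eta}_t=\int_s^t k(t,r)\,dB^{s,\eta}_r$ expresses $X^{s,\eta}$ as a (deterministic) functional of the Brownian path $B^{s,\eta}$, while conversely $B^{s,\eta}_t$ is measurable with respect to $\mathcal{G}_t$ by the causal inversion above. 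Hence the augmented filtrations generated by $X^{s,\eta}$ and by $B^{s,\eta}$ coincide, and the latter is right-continuous by the usual Blumenthal-type argument for Brownian motion (or by invoking that $(\mathbbm{P}^{s,\eta})$ is a path-dependent canonical class satisfying Hypothesis \ref{HypClass}, which via Proposition \ref{CoroTrivial} gives triviality of $\mathcal{F}^{s,\eta}_s$ and, combined with the Gaussian/independent-increments structure, right-continuity at all $t$). Therefore $\mathcal{F}^{s,\eta}_t=\bigcap_{u>t}\mathcal{G}_u=\mathcal{G}_t$.

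The main obstacle I anticipate is the rigorous inversion of the Volterra kernel $k$ and the verification that it is \emph{causal} — i.e. that $B^{s,\eta}_t$ is genuinely $\mathcal{F}^o_t$-measurable and not merely $\mathcal{F}^o_T$-measurable. Hypothesis \ref{HypGaussProba} item 4 gives injectivity of the map $h\mapsto \int_0^{\cdot}k(\cdot,r)h(r)dr$ on each $[0,t]$, which is exactly what is needed to define $B^{s,\eta}$ unambiguously and causally; combined with Remark \ref{RemHypGaussProba} item 2 (which identifies $H^t(\mathbbm{P})$ with the $L^2$-span of the Brownian increments up to $t$, i.e.\ the Hida canonical representation property, via Theorem 1.7 in \cite{HidaCanRep}), this should be pushed through at the level of the conditioned measure $\mathbbm{P}^{s,\eta}$ by translating by the deterministic path $m_s[\eta]$. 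So the bulk of the work is bookkeeping: transporting the unconditional canonical representation and its causality to $\mathbbm{P}^{s,\eta}$ on the time interval $[s,T]$, and then invoking standard facts about augmented Brownian filtrations for the right-continuity claim.
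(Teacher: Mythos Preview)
Your plan is essentially correct and close in spirit to the paper's proof, but the route differs in one structural point. The paper does \emph{not} invert the Volterra kernel directly under $\mathbbm{P}^{s,\eta}$. Instead it builds (in Lemma~\ref{LemFilt}) an explicit isometry $\Phi^s:H^s(\mathbbm{P})^{\perp}\to H(\mathbbm{P}^s)$ between Gaussian Hilbert spaces, characterized by $\Phi^s\big(\int_s^t k_i(t,r)\,dB_r\big)=X^i_t$, composes it with the translation $T^{s,\eta}:Y\mapsto Y-\mathbbm{E}^{s,\eta}[Y]$, and then \emph{defines} $B^{i,s,\eta}_t:=\Phi^{s,\eta}(B^i_t-B^i_s)$, i.e.\ it transports the original Brownian motion $B$ rather than reconstructing one from scratch. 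The causality statement $B^{s,\eta}_t\in H^t(\mathbbm{P}^{s,\eta})$ then follows from the set-level identity $\Phi^{s,\eta}(H^s(\mathbbm{P})^{\perp}\cap H^t(\mathbbm{P}))=H^t(\mathbbm{P}^{s,\eta})$, which in turn rests on Remark~\ref{RemHypGaussProba} item~2 (Hida's canonical representation). Your direct approach---building the isometry $L^2([s,T],\mathbbm{R}^d)\to H(\mathbbm{P}^{s,\eta})$ from the covariance formula of Lemma~\ref{Cs} and reading off $B^{s,\eta}_t$ as the image of $\mathds{1}_{[s,t]}$---is a legitimate alternative and arguably more self-contained; the paper's route has the advantage that linear independence and surjectivity are handled once and for all by Lemma~\ref{Isometry} and Corollary~\ref{LinearIndep} (the latter using the full-support Proposition~\ref{FullSupp}), whereas you must separately check that item~4 of Hypothesis~\ref{HypGaussProba}, stated on $[0,t]$, transfers to intervals $[s,t]$ (it does, by extending $h$ by zero on $[0,s]$, but you should say so).

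Two cautions. First, your parenthetical ``or by invoking that $(\mathbbm{P}^{s,\eta})$ satisfies Hypothesis~\ref{HypClass}'' is circular: in the paper, Corollary~\ref{CoroHyp} (which establishes Hypothesis~\ref{HypClass}) is \emph{deduced from} Proposition~\ref{FiltRichtCont}, so you cannot use it here. Stick to the Brownian argument: once you know the augmented filtrations of $X^{s,\eta}$ and of $B^{s,\eta}$ coincide, right-continuity of the augmented Brownian filtration (e.g.\ Proposition~7.7 in \cite{ks}, as the paper cites) gives $\mathcal{F}^{o,s,\eta}_t=\mathcal{F}^{s,\eta}_t$. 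Second, drop the sentence ``the augmented filtration of a continuous process is right-continuous''---that is false in general, and your next sentence already contains the correct argument.
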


\begin{corollary}\label{CoroHyp} 
$(\mathbbm{P}^{s,\eta})_{(s,\eta)\in[0,T]\times\Omega}$ verifies Hypothesis \ref{HypClass}.
\end{corollary}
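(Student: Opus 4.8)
The aim is to verify Hypothesis \ref{HypClass}, i.e. to upgrade the identity \eqref{DE13} --- already available from Proposition \ref{ClassGauss}, which states that $(\mathbbm{P}^{s,\eta})_{(s,\eta)\in[0,T]\times\Omega}$ is a path-dependent canonical class --- from the initial filtration $\mathbbm{F}^o$ to the right-continuous canonical filtration $\mathbbm{F}$. The plan is to show that this upgrade costs nothing, because at a fixed $(s,\eta)$ and $t\geq s$ the $\sigma$-fields $\mathcal{F}^o_t$ and $\mathcal{F}_t$ differ only by $\mathbbm{P}^{s,\eta}$-negligible sets, and conditional expectations are insensitive to such a modification.

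Concretely, I would argue as follows. Fix $(s,\eta)\in[0,T]\times\Omega$ and $t\in[s,T]$. By Notation \ref{canonicalspace} we have the inclusions $\mathcal{F}^o_t\subseteq\mathcal{F}_t$, and by Notation \ref{CompletedBasis} the $\sigma$-field $\mathcal{F}^{s,\eta}_t$ is precisely $\mathcal{F}_t$ augmented with the $\mathbbm{P}^{s,\eta}$-negligible sets, so $\mathcal{F}_t\subseteq\mathcal{F}^{s,\eta}_t$. On the other hand, Proposition \ref{FiltRichtCont} (for $t\geq s$) identifies $\mathcal{F}^{s,\eta}_t$ with $\mathcal{F}^o_t$ augmented with the $\mathbbm{P}^{s,\eta}$-null sets. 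Hence $\mathcal{F}^o_t\subseteq\mathcal{F}_t\subseteq\mathcal{F}^{s,\eta}_t=\overline{\mathcal{F}^o_t}^{\,\mathbbm{P}^{s,\eta}}$, so every $F\in\mathcal{F}_t$ coincides $\mathbbm{P}^{s,\eta}$-a.s. with some $F_0\in\mathcal{F}^o_t$. Consequently, for any bounded (or integrable) random variable $Z$, the $\mathcal{F}^o_t$-measurable version $\mathbbm{E}^{s,\eta}[Z\,|\,\mathcal{F}^o_t]$ is also a version of $\mathbbm{E}^{s,\eta}[Z\,|\,\mathcal{F}_t]$, simply because for $F\in\mathcal{F}_t$ one has $\int_F \mathbbm{E}^{s,\eta}[Z\,|\,\mathcal{F}^o_t]\,d\mathbbm{P}^{s,\eta}=\int_{F_0}\mathbbm{E}^{s,\eta}[Z\,|\,\mathcal{F}^o_t]\,d\mathbbm{P}^{s,\eta}=\int_{F_0}Z\,d\mathbbm{P}^{s,\eta}=\int_F Z\,d\mathbbm{P}^{s,\eta}$. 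Applying this to $Z=\mathbf{1}_F$ for an arbitrary $F\in\mathcal{F}$ yields $\mathbbm{P}^{s,\eta}(F\,|\,\mathcal{F}_t)=\mathbbm{P}^{s,\eta}(F\,|\,\mathcal{F}^o_t)$ for $\mathbbm{P}^{s,\eta}$-almost all $\omega$, and \eqref{DE13} then gives that this common value equals $\mathbbm{P}^{t,\omega}(F)$ for $\mathbbm{P}^{s,\eta}$-almost all $\omega$, which is exactly \eqref{DE14}.

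I do not expect any real obstacle here: the substantive content has already been placed in Proposition \ref{FiltRichtCont}, namely that the $\mathbbm{P}^{s,\eta}$-augmentation of the initial filtration is already right-continuous (a Blumenthal-type statement for the conditioned Gaussian measures obtained from the Volterra representation). The only points to keep an eye on are bookkeeping ones: everything must be read at a fixed $(s,\eta)$ so that ``negligible'' is unambiguous, and one must note that no measurability-in-$\eta$ question intervenes, since \eqref{DE14} is a statement for each fixed $\eta$ separately. Thus the corollary follows by combining Proposition \ref{ClassGauss}, Proposition \ref{FiltRichtCont} and Notation \ref{CompletedBasis}.
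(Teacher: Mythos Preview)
Your proposal is correct and follows exactly the approach the paper takes: the paper's proof simply cites Proposition \ref{ClassGauss} for \eqref{DE13}, Notation \ref{CompletedBasis}, and Proposition \ref{FiltRichtCont}, leaving the (standard) passage from $\mathcal{F}^o_t$-conditioning to $\mathcal{F}_t$-conditioning implicit. You have merely spelled out that passage in detail, which is fine.
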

\begin{proof}
By Proposition \ref{ClassGauss} $(\mathbbm{P}^{s,\eta})$
is a path-dependent canonical class, in particular
 \eqref{DE13} holds. Taking into account Notation \ref{CompletedBasis}
the result follows by Proposition  \ref{FiltRichtCont}.

\end{proof}

We now conclude this section by extending previous results to the case with drift. In Proposition  \ref{ProbaDrift} below, $\mathbbm{Q}^{s,\eta}$ will model the conditional 
 law of $\beta+\int_0^{\cdot}k(\cdot,r)dB_r$ given $\omega^s = \eta^s$
 where $\beta$
is a given path in $\Omega$.

\begin{proposition}\label{ProbaDrift}
	Let $\beta\in\Omega$ and define for all $(s,\eta)$, and $F\in\mathcal{F}$: $ \mathbbm{Q}^{s,\eta}(F):=\mathbbm{P}^{s,\eta-\beta^s}(F-\beta)$.
	
	Then, 
the following holds.
\begin{enumerate}
\item $\left(\mathbbm{Q}^{s,\eta}\right)_{(s,\eta)\in[0,T]\times\Omega}$ is a progressive path-dependent canonical class satisfying Hypothesis \ref{HypClass}.
\item	
	For all $(s,\eta)$, $\mathbbm{Q}^{s,\eta}$ is a Gaussian measure with mean function $\beta+m_s[\eta-\beta]$  and covariance function $c^s$, see Definition \ref{DefBasic}.
\item	 
For all $(s,\eta)$,  under $\mathbbm{Q}^{s,\eta}$, 
there exists
 a Brownian motion $\tilde{B}^{s,\eta}$ such that on $[s,T]$, $X$ is indistinguishable from $m_s[\eta-\beta]+\beta + \int_s^{\cdot}k(\cdot,r)d\tilde{B}^{s,\eta}_r$.
\end{enumerate}
\end{proposition}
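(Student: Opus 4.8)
\emph{The plan} is to realise each $\mathbbm{Q}^{s,\eta}$ as the image of $\mathbbm{P}^{s,\eta-\beta^s}$ under the affine homeomorphism $\tau_\beta:\Omega\to\Omega$, $\omega\mapsto\omega+\beta$, and then to transport to $(\mathbbm{Q}^{s,\eta})$ all the structure already established for $(\mathbbm{P}^{s,\eta})$ in Propositions \ref{ClassGauss}, \ref{Prog}, \ref{FiltRichtCont} and Corollary \ref{CoroHyp}. Indeed $\mathbbm{Q}^{s,\eta}(F)=\mathbbm{P}^{s,\eta-\beta^s}(\tau_\beta^{-1}(F))$ by definition, $\tau_\beta$ is bimeasurable with $\tau_\beta^{-1}=\tau_{-\beta}$, and since $X_r\circ\tau_\beta=X_r+\beta(r)$ it preserves each $\sigma$-field $\mathcal{F}^o_t$ and $\mathcal{F}_t$, that is $\tau_\beta^{-1}(\mathcal{F}^o_t)=\mathcal{F}^o_t$ and $\tau_\beta^{-1}(\mathcal{F}_t)=\mathcal{F}_t$ for all $t$. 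First I would record two bookkeeping identities, both consequences of Definition \ref{extension} together with $(\omega-\beta)^s=\omega^s-\beta^s$: namely $\mathbbm{P}^{s,\eta-\beta^s}=\mathbbm{P}^{s,\eta-\beta}$, $m_s[\eta-\beta^s]=m_s[\eta-\beta]$, and hence $\mathbbm{Q}^{t,\omega}(F)=\mathbbm{P}^{t,\omega-\beta}(F-\beta)$ for every $(t,\omega,F)$. Note that the naive shortcut of applying Proposition \ref{ClassGauss} to the law of $\beta+X$ under $\mathbbm{P}$ is not available, since that measure carries a drift and therefore does not admit a representation in the sense of Definition \ref{DefBasic}; this is precisely why the translation argument is needed.

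For item 2, $\mathbbm{P}^{s,\eta-\beta^s}$ is Gaussian by Proposition \ref{CondExp} and Definition \ref{extension}, and for $l\in\Omega^{*}$ one has $l\circ\tau_\beta=l(\cdot)+l(\beta)$, so the push-forward $\mathbbm{Q}^{s,\eta}$ is Gaussian; moreover $\underset{r\in[0,T]}{\sup}\|X_r\|\in\mathcal{L}^p(\mathbbm{Q}^{s,\eta})$ follows from Proposition \ref{supX} and $\|\beta\|_{\infty}<\infty$. Computing through $\tau_\beta$, the $\mathbbm{Q}^{s,\eta}$-mean of $X_t$ equals $\mathbbm{E}^{s,\eta-\beta^s}[X_t]+\beta(t)=\beta(t)+m_s[\eta-\beta](t)$, while translation leaves covariances unchanged, so the covariance function of $X$ under $\mathbbm{Q}^{s,\eta}$ equals that of $X-m_s[\cdot]$ under $\mathbbm{P}^{s,\cdot}$, which is $c^s$ by Notation \ref{Ps} and Remark \ref{RPs}.

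For item 1, I would verify the three items of Definition \ref{DefCondSyst}, progressivity, and Hypothesis \ref{HypClass}, each by transport. Item 1 of Definition \ref{DefCondSyst}: $\mathbbm{Q}^{s,\eta}(\omega^s=\eta^s)=\mathbbm{P}^{s,\eta-\beta^s}(\omega^s=\eta^s-\beta^s)=1$ by Proposition \ref{ClassGauss}. Item 2: $\eta\mapsto\mathbbm{Q}^{s,\eta}(F)=\mathbbm{P}^{s,\eta^s-\beta^s}(F-\beta)$ is $\mathcal{F}^o_s$-measurable because $\eta\mapsto\eta^s-\beta^s$ is $(\mathcal{F}^o_s,\mathcal{F}^o_s)$-measurable and $\zeta\mapsto\mathbbm{P}^{s,\zeta}(F-\beta)$ is $\mathcal{F}^o_s$-measurable by Proposition \ref{ClassGauss}. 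Item 3, i.e. \eqref{DE13}: using the elementary fact that for a bimeasurable bijection $\Phi$ with $\Phi^{-1}(\mathcal{G})=\mathcal{G}$ and $\nu=\Phi_{*}\mu$ one has $\nu(F\,|\,\mathcal{G})=\mu(\Phi^{-1}F\,|\,\mathcal{G})\circ\Phi^{-1}$ $\nu$-a.s., applied with $\Phi=\tau_\beta$, $\mathcal{G}=\mathcal{F}^o_t$, $\mu=\mathbbm{P}^{s,\eta-\beta^s}$, together with \eqref{DE13} for $(\mathbbm{P}^{s,\eta})$, one gets $\mathbbm{Q}^{s,\eta}(F\,|\,\mathcal{F}^o_t)=\big(\omega\mapsto\mathbbm{P}^{t,\omega-\beta}(F-\beta)\big)=\big(\omega\mapsto\mathbbm{Q}^{t,\omega}(F)\big)$ $\mathbbm{Q}^{s,\eta}$-a.s.; the same argument with $\mathcal{G}=\mathcal{F}_t$ and Corollary \ref{CoroHyp} gives Hypothesis \ref{HypClass}. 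Progressivity: $(t,\omega)\mapsto\mathbbm{Q}^{t,\omega}(F)=\mathbbm{P}^{t,\omega-\beta}(F-\beta)$ is obtained by composing the $\mathbbm{F}^o$-progressively measurable map $(t,\omega)\mapsto\mathbbm{P}^{t,\omega}(F-\beta)$ of Proposition \ref{Prog} with the continuous shift $\tau_{-\beta}$, which is $(\mathcal{F}^o_u,\mathcal{F}^o_u)$-measurable for every $u$.

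For item 3, let $B^{s,\eta-\beta^s}$ be the Brownian motion provided by Proposition \ref{FiltRichtCont}, so that $X-m_s[\eta-\beta^s]=\int_s^{\cdot}k(\cdot,r)dB^{s,\eta-\beta^s}_r$ on $[s,T]$, $\mathbbm{P}^{s,\eta-\beta^s}$-a.s. Since for $t\ge s$ the $\mathbbm{P}^{s,\eta-\beta^s}$-completed filtration reduces to $\mathcal{F}^o_t$ augmented by null sets, and $\tau_\beta$ preserves the $\mathcal{F}^o_t$ while sending $\mathbbm{P}^{s,\eta-\beta^s}$-null sets to $\mathbbm{Q}^{s,\eta}$-null sets, $\tau_\beta$ is an isomorphism of filtered probability spaces on $[s,T]$; hence $\tilde B^{s,\eta}:=B^{s,\eta-\beta^s}\circ\tau_{-\beta}$ is a Brownian motion under $\mathbbm{Q}^{s,\eta}$, and transporting the above representation together with the stochastic integral through $\tau_\beta$ yields $X=m_s[\eta-\beta^s]+\beta+\int_s^{\cdot}k(\cdot,r)d\tilde B^{s,\eta}_r$ on $[s,T]$, $\mathbbm{Q}^{s,\eta}$-a.s.; replacing $m_s[\eta-\beta^s]$ by $m_s[\eta-\beta]$ concludes. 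The only genuinely delicate point in the whole argument is the conditioning identity used in item 3 of Definition \ref{DefCondSyst} (and in Hypothesis \ref{HypClass}): there one must use that $\tau_\beta$ is a \emph{deterministic} translation, so that it both preserves the filtrations and commutes with conditional expectations; everything else reduces to bookkeeping around the stopped paths $\beta^s$.
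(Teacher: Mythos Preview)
Your proposal is correct and follows essentially the same route as the paper: both realise $\mathbbm{Q}^{s,\eta}$ as the push-forward of $\mathbbm{P}^{s,\eta-\beta^s}$ under the deterministic translation $\tau_\beta$ and then transport items 1.--3. from the corresponding results for $(\mathbbm{P}^{s,\eta})$. The only cosmetic difference is that for the conditioning step the paper writes out the chain of equalities $\mathbbm{E}^{\mathbbm{Q}^{s,\eta}}[\mathds{1}_F\mathds{1}_G]=\mathbbm{E}^{s,\eta-\beta^s}[\mathds{1}_{F-\beta}\mathds{1}_{G-\beta}]=\ldots$ directly, whereas you package the same computation into the abstract identity $\nu(F\,|\,\mathcal{G})=\mu(\Phi^{-1}F\,|\,\mathcal{G})\circ\Phi^{-1}$; the content is identical.
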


\begin{proof}
	We start with the first statement. 
The progressivity property follows by the one of $(\mathbbm{P}^{s,\eta})_{s,\eta}$.
Since $(\mathbbm{P}^{s,\eta})_{s,\eta}$ is a  path-dependent canonical class
items 1. and 2. of Definition \ref{DefCondSyst} are clearly verified, so we only have to show that \eqref{DE14} holds. 
	We fix $(s,\eta)$, $t\geq s$, $F\in\mathcal{F}$ and we show that
	\begin{equation}\label{EqPtilde}
		\mathbbm{Q}^{s,\eta}(F|\mathcal{F}_t)=\mathbbm{Q}^{t,\omega}(F),\quad \mathbbm{Q}^{s,\eta},\text{ a.s.}
	\end{equation}
	
	Let $G\in\mathcal{F}_t$. We have
	\begin{equation}\label{EqPtilde2}
	\begin{array}{rcl}
			\mathbbm{E}^{\mathbbm{Q}^{s,\eta}}[\mathds{1}_F(\omega)\mathds{1}_G(\omega)]&=&
			\mathbbm{E}^{s,\eta-\beta^s}[\mathds{1}_{F-\beta}(\omega)\mathds{1}_{G-\beta}(\omega)]\\
			&=&\mathbbm{E}^{s,\eta-\beta^s}[\mathbbm{E}^{s,\eta-\beta^s}[\mathds{1}_{F-\beta}|\mathcal{F}_t](\omega)\mathds{1}_{G-\beta}(\omega)]\\
			&=&\mathbbm{E}^{s,\eta-\beta^s}[\mathbbm{P}^{t,\omega}(F-\beta)\mathds{1}_{G-\beta}(\omega)]\\
			&=&\mathbbm{E}^{s,\eta-\beta^s}[\mathbbm{Q}^{t,\omega+\beta^t}(F)\mathds{1}_{G-\beta}(\omega)]\\
			&=&\mathbbm{E}^{\mathbbm{Q}^{s,\eta}}[\mathbbm{Q}^{t,\omega}(F)\mathds{1}_{G}(\omega)],
	\end{array}
	\end{equation}
where the second equality  holds because $G-\beta\in\mathcal{F}_t$; the third 
equality because $(\mathbbm{P}^{s,\eta})_{(s,\eta)\in[0,T]\times\Omega}$ verifies Hypothesis \ref{HypClass} (see Corollary \ref{CoroHyp}) and the last two
equalities by definition of the $\mathbbm{Q}^{s,\eta}$.

By definition of conditional expectation, the fact that \eqref{EqPtilde2} holds for all $G\in\mathcal{F}_t$ implies \eqref{EqPtilde}.

Concerning the second statement, we fix $(s,\eta)$. 
 $\mathbbm{Q}^{s,\eta}$ is the translation of $\mathbbm{P}^{s,\eta-\beta^s}$ in the direction $\beta$,
 so it is a Gaussian measure with same covariance function $c^s$ and 
with mean function the mean function (see Definition \ref{DefBasic}) of $\mathbbm{P}^{s,\eta-\beta^s}$, translated of $\beta$, meaning $m_s[\eta-\beta]+\beta$.

Finally the third statement is a consequence of Proposition \ref{FiltRichtCont} and of item 2.
\end{proof}

\section{BSDEs with Gaussian forward process and decoupled mild solutions of path-dependent PDEs}
\label{S4}

\subsection{General considerations.}\label{S41}

This section is the main part of the paper. Its aim is to introduce formally equation $PDPDE(f,\xi)$ introduced in the introduction (see \eqref{C7IntroPDE}), its coefficients, the operators that it involves, and to prove existence and uniqueness of what we call a \textit{decoupled mild solution}. We will make use of the probabilistic framework and results obtained in the previous section.

We are now given a Gaussian measure $\mathbbm{P}$ satisfying Hypothesis \ref{HypGaussProba} and the corresponding  path-dependent canonical class $(\mathbbm{P}^{s,\eta})_{(s,\eta)\in[0,T]\times\Omega}$, see Proposition \ref{ClassGauss}.

We fix a  function $b:[0,T]\times[0,T]\rightarrow\mathbbm{R}^d$ and we assume for the remainder of the paper that $b,k$ verify the following.

\begin{hypothesis}\label{Hypbk}\
	\begin{itemize}
		\item $b,k$ are bounded Borel;
		\item for all $s\in [0,T]$, $b(s,\cdot)$ and $k(s,\cdot)$  are equal to $0$ on $[0,s[$ and continuous on $[s,T]$ where they admit a bounded right-derivative;
		\item $t\mapsto\int_0^tb(t,r)dr$ is continuous.
	\end{itemize}
\end{hypothesis}

\begin{definition}  \label{D42}
	We set $\beta:t\mapsto\int_0^tb(t,r)dr$ and define $(\mathbbm{Q}^{s,\eta})_{(s,\eta)\in[0,T]\times\Omega}$ as in Proposition \ref{ProbaDrift} with this specific choice of $\beta$.
\end{definition}
\begin{notation}
	In this section, the continuous operator $m_s[\cdot-\beta]$ will be denoted $m_s$ and $\mathbbm{E}^{s,\eta}$ will denote the expectation with respect to $\mathbbm{Q}^{s,\eta}$ and not $\mathbbm{P}^{s,\eta}$ any more.
\end{notation}

We recall that by Proposition \ref{ProbaDrift}, we have the following.
\begin{remark}\label{ResumeQ}\
	\begin{itemize}
		\item $(\mathbbm{Q}^{s,\eta})_{(s,\eta)\in[0,T]\times\Omega}$ defines a progressive path-dependent canonical class verifying Hypothesis \ref{HypClass};
		\item for every $(s,\eta)$, $\mathbbm{Q}^{s,\eta}$ is the Gaussian measure on $(\Omega,\mathcal{F})$ of covariance function $c^s$ and mean function $m_s[\eta]+\beta$, see Definition \ref{DefBasic};	
	\item for every $(s,\eta)$, there exists a $\mathbbm{Q}^{s,\eta}$-Brownian motion $B^{s,\eta}$ such that under $\mathbbm{Q}^{s,\eta}$ we have 
		\begin{equation}
			X = m_s[\eta]+\int_0^{\cdot}b(\cdot,r)dr + \int_s^{\cdot}k(\cdot,r)dB^{s,\eta}_r,
		\end{equation}
		on $[s,T]$.
	\end{itemize}
\end{remark}

\subsection{Differential operators involved in the path-dependent PDE}\label{S42}

\begin{notation} \label{N45}
  From now on, $(P_s)_{s\in[0,T]}$ denotes the path-dependent system of
  projectors associated (in the sense of Definition \ref{ProbaOp}) to $(\mathbbm{Q}^{s,\eta})_{(s,\eta)\in[0,T]\times\Omega}$.
\end{notation}

Our aim now is to provide a weak generator $(\mathcal{D}(A),A)$ of $(P_s)_{s\in[0,T]}$, see Definition \ref{WeakGen}.
\\
\\
The following definitions are adapted from \cite{ViensZh} Section 3.1.

\begin{definition}\label{DefViens}
We denote by $\bar{\Omega}:=\mathbbm{D}([0,T],\mathbbm{R})$ the \textbf{Skorohod space} of cadlag functions from $[0,T]$ to $\mathbbm{R}$. $\bar{\mathcal{F}}$ (resp. $\bar{\mathbbm{F}}^o$) will denote the corresponding Borel $\sigma$-field (resp. initial filtration, see Notation \ref{canonicalspace}).

Let $t\in[0,T]$.
 $\bar{\Omega}_t$ will denote the set of elements of $\bar{\Omega}$ equal to $0$ on $[0,t[$ and continuous on $[t,T]$.
For $\omega,\eta\in\bar{\Omega}$. $\eta\otimes_t\omega$ will denote $\eta\mathds{1}_{[0,t[}+\omega\mathds{1}_{[t,T[}$.

 $\bar{\Lambda}$ will denote the set of $(t,\omega)\in[0,T]\times\bar{\Omega}$ such that $\omega$ is continuous after time $t$. We equip $\bar{\Lambda}$ with the distance defined by $d((s,\eta),(t,\omega))=|t-s|+\|\omega-\eta\|_{\infty}$.
$\mathcal{C}^0(\bar{\Lambda})$ will denote the set of real-valued  functions  on $\bar{\Lambda}$, continuous with respect to $d$.

We fix $\Phi\in\mathcal{C}^0(\bar{\Lambda})$. For $(t,\omega)\in\bar{\Lambda}$, $D\Phi_{t}(\omega)$ will denote 
$\underset{\epsilon \rightarrow  0+}{\text{lim }} \frac{1}{\epsilon}
\left(\Phi_{t+\epsilon}(\omega)-\Phi_{t}(\omega)\right)$ if this limit exists. \\

Let $\eta\in\bar{\Omega}_t$, $\nabla_{\eta}\Phi_{t}(\omega)$ will denote $\underset{\epsilon\rightarrow 0}{\text{lim }} \frac{1}{\epsilon}\left(\Phi_{t}(\omega+\epsilon\eta)-\Phi_{t}(\omega)\right)$ if this limit exists. We define $\nabla^2_{\eta}\Phi_{t}(\omega)$ similarly for $\eta\in\bar{\Omega}_t$. 

We say that $\Phi$ \textbf{has polynomial growth} if there exists $C>0$, $p\geq 1$ such that $|\Phi_{t}(\omega)|\leq C(1+\|\omega\|_{\infty}^p)$ for all $(t,\omega)\in\bar{\Lambda}$.

Concerning gradient processes, we will say that $\nabla\Phi$ has polynomial growth if there exists $C>0$, $p\geq 1$ such that $|\nabla_{\eta}\Phi_{t}(\omega)|\leq C(1+\|\omega\|_{\infty}^p)$ for all $(t,\omega)\in\bar{\Lambda}$ and $\eta\in\bar{\Omega}_t$.

We say that $\nabla\Phi$ is continuous if for all $\eta\in\bar{\Omega}_t$, $(t,\omega)\mapsto\nabla_{\eta}\Phi_{t}(\omega)\in\mathcal{C}^0(\bar{\Lambda})$.

Finally we define $\mathcal{C}_+^{1,2}(\bar{\Lambda})$ the set of elements $\Phi\in\mathcal{C}^0(\bar{\Lambda})$ verifying the following hypothesis.
\begin{itemize}
\item $D\Phi,\nabla\Phi,\nabla^2\Phi$ exist and are continuous;
\item $\Phi,D\Phi,\nabla\Phi,\nabla^2\Phi$ have polynomial growth;
\item there exists $p\geq 1$ and a bounded modulus of continuity $\rho$  such that for all $(t,\omega),(t,\omega')\in\bar{\Lambda}$ and $\eta\in\bar{\Omega}_t$, $$|\nabla^2_{\eta}\Phi_{t}(\omega)-\nabla^2_{\eta}\Phi_{t}(\omega')|\leq (1+\|\omega\|_{\infty}^p+\|\omega'\|_{\infty}^p)\rho(\|\omega-\omega'\|_{\infty}).$$
\end{itemize} 
\end{definition}

In the sequel, given  $\tilde{\Phi}\in\mathcal{C}^0(\bar{\Lambda})$,
we will denote
 \begin{equation}  \label{EPhiTilde}
\Phi:(t,\omega) \longmapsto \tilde{\Phi}_{t}(m_t[\omega]).
\end{equation}


\begin{lemma}\label{linkderiv}
Let $\tilde{\Phi}\in\mathcal{C}^0(\bar{\Lambda})$.
Then the following holds.
\begin{enumerate}
\item   $\Phi$ is $\mathbbm{F}^o$-progressively measurable. 

\item If moreover $\tilde{\Phi}\in\mathcal{C}_+^{1,2}(\bar{\Lambda})$, 
$(t,\omega)\mapsto D\tilde{\Phi}_{t}(m_t[\omega])$, $(t,\omega)\mapsto \nabla_{b(\cdot,t)}\tilde{\Phi}_{t}(m_t[\omega])$; $(t,\omega)\mapsto \nabla^2_{k_i(\cdot,t)}\tilde{\Phi}_{t}(m_t[\omega]), i\leq d$ 
 are also $\mathbbm{F}^o$-progressively measurable.
\end{enumerate}
\end{lemma}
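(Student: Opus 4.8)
The plan is to prove the two measurability assertions separately, the first being essentially a consequence of already-established facts and the second following by reduction to the first.

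First I would handle statement 1. Recall from Corollary \ref{Coro_m} that the mean random field $m:(s,\eta)\mapsto m_s[\eta]$ is $(\mathcal{P}ro^o,\mathcal{F})$-measurable as a map $[0,T]\times\Omega\to\Omega$, and moreover jointly measurable. On the other hand, $\tilde\Phi\in\mathcal{C}^0(\bar\Lambda)$ is continuous for the distance $d$; since for each $(s,\omega)\in[0,T]\times\Omega$ the stopped path $m_s[\omega]$ is continuous (it lies in $\Omega$, hence in particular $(s,m_s[\omega])\in\bar\Lambda$), the composition $(s,\omega)\mapsto\tilde\Phi_s(m_s[\omega])$ is well-defined. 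To get progressive measurability of $\Phi$ I would fix $T_0\in[0,T]$ and argue on $[0,T_0]\times\Omega$: using that $m_s[\omega]=m_s[\omega^{T_0}]$ for $s\le T_0$ (Definition \ref{extension}), we may factor through $(s,\omega)\mapsto(s,\omega^{T_0})$ into $[0,T_0]\times\Omega_{T_0}$, on which $m$ is jointly $\mathcal{B}([0,T_0])\otimes\mathcal{F}^o_{T_0}$-measurable and in fact, by Proposition \ref{mcad}, right-continuous in $s$ and continuous in $\omega$. Then $\Phi$ restricted to $[0,T_0]\times\Omega$ is the composition of this map with the continuous map $(s,\zeta)\mapsto\tilde\Phi_s(\zeta)$ on $\bar\Lambda$ — more precisely one checks $d$-continuity of $s\mapsto(s,m_s[\omega])$ from the right (using right-continuity of $s\mapsto m_s[\omega]$ in $\|\cdot\|_\infty$) — yielding $\mathcal{B}([0,T_0])\otimes\mathcal{F}^o_{T_0}$-measurability of $\Phi|_{[0,T_0]\times\Omega}$; since $T_0$ is arbitrary, $\Phi$ is $\mathbbm{F}^o$-progressively measurable.

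For statement 2 the key observation is that each of $D\tilde\Phi$, $\nabla_{b(\cdot,t)}\tilde\Phi$ and $\nabla^2_{k_i(\cdot,t)}\tilde\Phi$ is again a function in $\mathcal{C}^0(\bar\Lambda)$ of the variables $(t,\zeta)$, which one then composes with $m$. Indeed, for $\tilde\Phi\in\mathcal{C}_+^{1,2}(\bar\Lambda)$ the process $D\tilde\Phi$ is continuous on $\bar\Lambda$ by definition of $\mathcal{C}_+^{1,2}$. For the directional derivatives, the subtlety is that the direction itself depends on $t$: e.g. $(t,\zeta)\mapsto\nabla_{b(\cdot,t)}\tilde\Phi_t(\zeta)$ involves the $t$-dependent direction $b(\cdot,t)\in\bar\Omega_t$ (which lies in $\bar\Omega_t$ precisely because of Hypothesis \ref{Hypbk}, $b(t,\cdot)$ vanishing on $[0,t[$ and being continuous on $[t,T]$). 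I would argue that $(t,\zeta)\mapsto\nabla_{b(\cdot,t)}\tilde\Phi_t(\zeta)$ is $d$-continuous: continuity in $\zeta$ at fixed $t$ is the stated continuity of $\nabla\tilde\Phi$, while joint/right continuity in $t$ uses continuity of $t\mapsto b(\cdot,t)$ in $\|\cdot\|_\infty$ together with the polynomial-growth bound on $\nabla\tilde\Phi$ and the continuity of $\nabla\tilde\Phi$ in its path argument; the same scheme works for $\nabla^2_{k_i(\cdot,t)}\tilde\Phi_t(\zeta)$, now additionally invoking the modulus-of-continuity estimate built into the definition of $\mathcal{C}_+^{1,2}(\bar\Lambda)$ to control $|\nabla^2_{k_i(\cdot,t)}\tilde\Phi_t(\zeta)-\nabla^2_{k_i(\cdot,t')}\tilde\Phi_{t'}(\zeta')|$ via triangle inequality. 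Once each of these three is known to be an element of $\mathcal{C}^0(\bar\Lambda)$, applying part 1 (with $\tilde\Phi$ replaced by each of them) gives that $(t,\omega)\mapsto D\tilde\Phi_t(m_t[\omega])$, $(t,\omega)\mapsto\nabla_{b(\cdot,t)}\tilde\Phi_t(m_t[\omega])$ and $(t,\omega)\mapsto\nabla^2_{k_i(\cdot,t)}\tilde\Phi_t(m_t[\omega])$ are $\mathbbm{F}^o$-progressively measurable.

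The main obstacle I anticipate is verifying the $d$-continuity (or at least joint measurability) of the $t$-parametrized directional derivatives $(t,\zeta)\mapsto\nabla_{b(\cdot,t)}\tilde\Phi_t(\zeta)$ and $(t,\zeta)\mapsto\nabla^2_{k_i(\cdot,t)}\tilde\Phi_t(\zeta)$, since the hypotheses of $\mathcal{C}_+^{1,2}(\bar\Lambda)$ are phrased for a \emph{fixed} direction $\eta\in\bar\Omega_t$; one has to combine the bilinearity/linearity of the derivative in the direction with the continuity of $t\mapsto b(\cdot,t)$, $t\mapsto k_i(\cdot,t)$ and with the polynomial growth and modulus-of-continuity controls, and be slightly careful that $b(\cdot,t),k_i(\cdot,t)$ genuinely belong to $\bar\Omega_t$ for each $t$ so that these derivatives are defined at all. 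Everything else is a routine composition-of-measurable-maps argument of the kind already used in the proof of Corollary \ref{Coro_m}.
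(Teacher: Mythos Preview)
Your treatment of Part~1 is correct and coincides with the paper's argument: compose the continuous map $\tilde\Phi$ (restricted to $[0,T]\times\Omega\subset\bar\Lambda$) with the $(\mathcal{P}ro^o,\mathcal{B}([0,T])\otimes\mathcal{F})$-measurable map $(t,\omega)\mapsto(t,m_t[\omega])$ coming from Corollary~\ref{Coro_m}. The $D\tilde\Phi$ case in Part~2 is also handled exactly as you say, since $D\tilde\Phi\in\mathcal{C}^0(\bar\Lambda)$.

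Your primary strategy for the remaining terms in Part~2 --- showing that $(t,\zeta)\mapsto\nabla_{b(\cdot,t)}\tilde\Phi_t(\zeta)$ and $(t,\zeta)\mapsto\nabla^2_{k_i(\cdot,t)}\tilde\Phi_t(\zeta)$ are themselves in $\mathcal{C}^0(\bar\Lambda)$ and then reapplying Part~1 --- does not go through under the stated assumptions. The step ``continuity of $t\mapsto b(\cdot,t)$ in $\|\cdot\|_\infty$'' is not available: Hypothesis~\ref{Hypbk} only asks that $b,k$ be \emph{bounded Borel} on $[0,T]^2$, with regularity of each slice $b(\cdot,t),k(\cdot,t)$ in the \emph{first} variable; nothing forces $t\mapsto b(\cdot,t)$ to be continuous (or even right-continuous) as a $\bar\Omega$-valued map. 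Without that, linearity in the direction and the polynomial-growth bound cannot control $\nabla_{b(\cdot,t)}\tilde\Phi_t(\zeta)-\nabla_{b(\cdot,t')}\tilde\Phi_{t'}(\zeta')$. The modulus-of-continuity estimate in the definition of $\mathcal{C}_+^{1,2}(\bar\Lambda)$ only concerns variation of $\nabla^2_\eta\tilde\Phi_t$ in the path argument $\omega$, not in $t$ or in the direction, so it does not rescue the second-order term either. There is also the domain issue you flag: $\nabla_\eta\tilde\Phi_t$ is only defined for $\eta\in\bar\Omega_t$, so a triangle-inequality splitting that freezes the direction while moving $t$ is problematic.

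The paper avoids all of this by dropping continuity and arguing purely at the level of measurability --- precisely your stated fallback. For fixed $\eta$, $(t,\omega)\mapsto\nabla_\eta\tilde\Phi_t(\omega)$ is continuous by the definition of $\mathcal{C}_+^{1,2}(\bar\Lambda)$; for fixed $(t,\omega)$, $\eta\mapsto\nabla_\eta\tilde\Phi_t(\omega)$ is measurable as a pointwise limit of the (continuous, hence measurable) difference quotients. A Carath\'eodory-type joint-measurability lemma (the paper cites Lemma~4.51 in \cite{aliprantis}) then yields joint measurability of $(\eta,t,\omega)\mapsto\nabla_\eta\tilde\Phi_t(\omega)$. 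Since $b$ is Borel on $[0,T]^2$ and the Borel $\sigma$-field on $\bar\Omega$ is generated by coordinates, $t\mapsto b(\cdot,t)$ is Borel from $[0,T]$ into $(\bar\Omega,\bar{\mathcal F})$; composing gives measurability of $(t,\omega)\mapsto\nabla_{b(\cdot,t)}\tilde\Phi_t(\omega)$, and a final composition with $(t,\omega)\mapsto(t,m_t[\omega])$ as in Part~1 yields $\mathbbm{F}^o$-progressive measurability. The second-order terms are handled identically. So your ``at least joint measurability'' instinct is exactly the right route; the $d$-continuity approach would require strengthening Hypothesis~\ref{Hypbk}.
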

\begin{proof}
\begin{enumerate}
\item
We fix $\tilde{\Phi}\in\mathcal{C}^0(\bar{\Lambda})$. By Corollary \ref{Coro_m}, $(t,\omega)\mapsto m_t[\omega]$ is
 $\mathbbm{F}^o$-progressively measurable, so  that 
$\Psi:\begin{array}{ccl}
(t,\omega)&\longmapsto& (t,m_t[\omega])\\
\,[0,T]\times\Omega&\longrightarrow &[0,T]\times \Omega 
\end{array}$ is \\
$(\mathcal{P}ro^o,\mathcal{B}([0,T])\otimes\mathcal{F})$-measurable. 
We remark that $[0,T]\times\Omega$
is a subset of $\bar \Lambda$.
The restriction of $\tilde{\Phi}$ to  $[0,T]\times\Omega$
of $\bar \Lambda$
  is continuous for the usual topology hence $(\mathcal{B}([0,T])\otimes\mathcal{F},\mathcal{B}(\mathbbm{R}))$-measurable. 
By composition,
$\Phi=\tilde{\Phi}\circ \Psi$ is  $(\mathcal{P}ro^o,\mathcal{B}(\mathbbm{R}))$-measurable.
\item
We now discuss the statement 2. Since $D\tilde{\Phi} \in \mathcal{C}^0(\bar{\Lambda})$, by the  statement 1. of the lemma that $(t,\omega)\mapsto D\tilde{\Phi}_{t}(m_t[\omega])$ is progressively measurable. We will now show that the same holds for the first order space derivative $(t,\omega)\mapsto \nabla_{b(\cdot,t)}\tilde{\Phi}_{t}(m_t[\omega])$. 

Since $\tilde{\Phi}$ is in $\mathcal{C}_+^{1,2}(\bar{\Lambda})$, then by definition, for all $\eta$, $(t,\omega)\mapsto \nabla_{\eta}\tilde{\Phi}_t(\omega)$ is continuous. On the other hand, it is clear that for all $(t,\omega)$, $\eta\mapsto \nabla_{\eta}\tilde{\Phi}_t(\omega)$ is measurable as the limit of measurable mappings. So $(\eta,t,\omega)\mapsto \nabla_{\eta}\tilde{\Phi}_t(\omega)$ is jointly measurable, see Lemma 4.51 in \cite{aliprantis}. Since $b$ is Borel, then $t\mapsto b^t$ is Borel from $([0,T],\mathcal{B}([0,T]))$ into $(\bar{\Omega},\bar{\mathcal{F}})$. By composition, we get that $(t,\omega)\mapsto \nabla_{b(\cdot,t)}\tilde{\Phi}_t(\omega)$ is measurable. We can now conclude as for the first statement by composing with $(t,\omega)\mapsto (t,m_t(\omega))$.

Finally, similar arguments allow to  show the progressive measurability
of the second order space derivatives $(t,\omega)\mapsto \nabla^2_{k_i(\cdot,t)}\tilde{\Phi}_{t}(m_t[\omega]), i\leq d$ .
\end{enumerate}
\end{proof}

%

\begin{definition}\label{DefDA}  
We denote $\mathcal{D}(\tilde{A})$ to be the set of mappings $\tilde{\Phi}\in\mathcal{C}_+^{1,2}(\bar{\Lambda})$  such that $(t,\omega)\mapsto D\tilde{\Phi}_{t}(m_t[\omega])$, $(t,\omega)\mapsto \nabla_{b(\cdot,t)}\tilde{\Phi}_{t}(m_t[\omega])$; $(t,\omega)\mapsto \nabla_{k_i(\cdot,t)}\tilde{\Phi}_{t}(m_t[\omega]),i\leq d$, $(t,\omega)\mapsto \nabla^2_{k_i(\cdot,t)}\tilde{\Phi}_{t}(m_t[\omega]),i\leq d$ 
  have polynomial growth.
On that space we define the linear operator $\tilde{A}$ by setting,
 for all  $\tilde{\Phi}\in  \mathcal{D}(\tilde{A})$ and $t\in[0,T],$
 $$\tilde{A}\tilde{\Phi}_t:= D\tilde{\Phi}_t+\nabla_{b(\cdot,t)}\tilde{\Phi}_{t}+\frac{1}{2}\underset{i\leq d}{\sum}\nabla^2_{k_i(\cdot,t)}\tilde{\Phi}_{t}.$$
  
We then denote $\mathcal{D}(A)$ to be the set of processes $\Phi:(t,\omega)\longmapsto\tilde{\Phi}_{t}(m_t[\omega])$ where $\tilde{\Phi}\in  \mathcal{D}(\tilde{A})$, and $A$ to be the linear operator defined for all $\Phi:(t,\omega)\longmapsto\tilde{\Phi}_{t}(m_t[\omega])\in \mathcal{D}(A)$ by

\begin{equation}
A\Phi_t(\omega) := \tilde{A}\tilde{\Phi}_t(m_t(\omega)), \forall (t,\omega).
\end{equation}
\end{definition}

\begin{remark} \label{R415}
	$\mathcal{C}_+^{1,2}(\bar{\Lambda})$, $\mathcal{D}(\tilde{A})$ and $\mathcal{D}(A)$ are linear algebras.
\end{remark}

\begin{proposition} \label{P413}
$(\mathcal{D}(A),A)$ introduced in previous Definition \ref{DefDA} fulfills Hypothesis \ref{HypDA}. 
\end{proposition}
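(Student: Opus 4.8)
The plan is to verify the four items of Hypothesis \ref{HypDA} in turn, using the results already established about $m$ and about $\mathbbm{Q}^{s,\eta}$. Item 1 and item 2 are essentially immediate: by Definition \ref{DefDA} every $\Phi \in \mathcal{D}(A)$ is of the form $(t,\omega)\mapsto \tilde\Phi_t(m_t[\omega])$ with $\tilde\Phi \in \mathcal{D}(\tilde A)$, and Lemma \ref{linkderiv} already tells us that such $\Phi$ and all the component processes $D\tilde\Phi_\cdot(m_\cdot), \nabla_{b(\cdot,\cdot)}\tilde\Phi_\cdot(m_\cdot), \nabla^2_{k_i(\cdot,\cdot)}\tilde\Phi_\cdot(m_\cdot)$ are $\mathbbm{F}^o$-progressively measurable; since $A\Phi$ is by definition the (finite) linear combination $D\tilde\Phi_t(m_t) + \nabla_{b(\cdot,t)}\tilde\Phi_t(m_t) + \tfrac12\sum_{i\le d}\nabla^2_{k_i(\cdot,t)}\tilde\Phi_t(m_t)$, it too is progressively measurable. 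Linearity of both $\mathcal{D}(A)$ and $A$ is clear from Definition \ref{DefDA} (and Remark \ref{R415}).

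For item 3, I would fix $\Phi = \tilde\Phi\circ m \in \mathcal{D}(A)$ and $\omega \in \Omega$, and bound $|A\Phi_r(\omega)|$ pointwise in $r$. By Definition \ref{DefDA}, each of $r\mapsto D\tilde\Phi_r(m_r[\omega])$, $r\mapsto\nabla_{b(\cdot,r)}\tilde\Phi_r(m_r[\omega])$, $r\mapsto\nabla^2_{k_i(\cdot,r)}\tilde\Phi_r(m_r[\omega])$ has polynomial growth, i.e. is bounded by $C(1+\|m_r[\omega]\|_\infty^p)$; and by Proposition \ref{CondExp} item 2.(e) (carried over to the drifted setting, where $m_r$ now denotes $m_r[\cdot-\beta]$, with $\beta$ bounded by Hypothesis \ref{Hypbk}) we have $\|m_r[\omega]\|_\infty \le M_{op}\|\omega^r\|_\infty + \|\beta\|_\infty \le M_{op}\|\omega\|_\infty + \|\beta\|_\infty$, a bound uniform in $r$. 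Hence $|A\Phi_r(\omega)| \le C'(1+\|\omega\|_\infty^p)$ uniformly in $r\in[0,T]$, so $\int_0^T |A\Phi_r(\omega)|\,dr \le T\,C'(1+\|\omega\|_\infty^p) < +\infty$.

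Item 4 is the one requiring a genuine estimate, and I expect it to be the main (though still routine) obstacle. I must show $\mathbbm{E}^{s,\eta}[\int_s^t |A(\Phi)_r|\,dr] < +\infty$ and $\mathbbm{E}^{s,\eta}[|\Phi_t|] < +\infty$, where $\mathbbm{E}^{s,\eta}$ is now expectation under $\mathbbm{Q}^{s,\eta}$. Using the same polynomial-growth bounds as above, both quantities are controlled by $\mathbbm{E}^{s,\eta}[C(1+\|\omega\|_\infty^p)]$ (for $|\Phi_t|$, note $|\Phi_t(\omega)| = |\tilde\Phi_t(m_t[\omega])| \le C(1+\|m_t[\omega]\|_\infty^p) \le C'(1+\|\omega\|_\infty^p)$ by the operator-norm bound, since $\tilde\Phi$ itself has polynomial growth; for $\int_s^t|A\Phi_r|\,dr$ use the uniform-in-$r$ bound from item 3 and Tonelli). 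It then suffices to know that $\sup_{r\in[0,T]}\|X_r\| \in \mathcal{L}^p(\mathbbm{Q}^{s,\eta})$ for every $p$. This follows from Proposition \ref{supX} for the centered measures $\mathbbm{P}^{s,\eta}$ together with the fact (Proposition \ref{ProbaDrift}, Definition \ref{D42}) that $\mathbbm{Q}^{s,\eta}$ is the translate of $\mathbbm{P}^{s,\eta-\beta^s}$ by the fixed bounded path $\beta$, so that $\mathbbm{E}^{\mathbbm{Q}^{s,\eta}}[\|\omega\|_\infty^p] \le 2^{p-1}\mathbbm{E}^{\mathbbm{P}^{s,\eta-\beta^s}}[\|\omega\|_\infty^p] + 2^{p-1}\|\beta\|_\infty^p < +\infty$. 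Combining, all four items of Hypothesis \ref{HypDA} hold, which proves the proposition.
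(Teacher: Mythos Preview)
Your proof is correct and follows the same approach as the paper: Lemma \ref{linkderiv} for items 1 and 2, and polynomial growth combined with $\sup_t\|X_t\|\in\mathcal{L}^p(\mathbbm{Q}^{s,\eta})$ (via Proposition \ref{supX} and the translation relation between $\mathbbm{Q}^{s,\eta}$ and $\mathbbm{P}^{s,\eta-\beta^s}$) for items 3 and 4. One small simplification: your detour through the operator-norm bound $\|m_r[\omega]\|_\infty\le M_{op}(\|\omega\|_\infty+\|\beta\|_\infty)$ is not needed, since Definition \ref{DefDA} already requires the processes $(t,\omega)\mapsto D\tilde\Phi_t(m_t[\omega])$, etc., to have polynomial growth directly in $\omega$.
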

\begin{proof}
Items 1. and 2. of 
 Hypothesis \ref{HypDA}
 are fulfilled thanks to Lemma \ref{linkderiv}; items 3. and 4. follow from polynomial growth of $\Phi,A\Phi$ for all $\Phi\in\mathcal{D}(A)$ and the fact that for all $s,\eta$, $p\geq 1$,
 $\underset{t\in[0,T]}{\text{sup }}|X_t|\in L^p(\mathbbm{Q}^{s,\eta})$, see Proposition \ref{supX} and the fact that $\mathbbm{Q}^{s,\eta}$ is a translation of $\mathbbm{P}^s$.
\end{proof}

The next step  consists in proving that
 $(\mathbbm{Q}^{s,\eta})_{(s,\eta)\in[0,T]\times\Omega}$ solves the martingale problem associated to 
$(\mathcal{D}(A),A)$, see Definition \ref{MPop}. Indeed by Remark \ref{ResumeQ}, 
 for all $(s,\eta)$, under $\mathbbm{Q}^{s,\eta}$,
 the process $\tilde{X}^{s,\eta}:=X-m_s[\eta]$  
 solves the Volterra SDE
 \begin{equation}
 \tilde{X}^{s,\eta}_t=\int_s^tb(t,r)dr+\int_s^tk(t,r) dB^{s,\eta}_r,\quad t\in[s,T].
 \end{equation}
 In this framework Theorem 3.9 in \cite{ViensZh} implies the following
 chain rule formula.

 \begin{theorem}\label{ThViens}
For every $(s,\eta)\in[0,T]\times\Omega$ and $\tilde{\Phi}\in\mathcal{C}_+^{1,2}(\bar{\Lambda})$ we have
the following. For all $t\geq s$
\begin{equation}\label{EqThViens}
\begin{array}{rl}
\tilde{\Phi}_{t}(m_t[\omega]-m_s[\eta])=&\tilde{\Phi}_{s}(0)+\int_s^tD\tilde{\Phi}_{r}(m_r[\omega]-m_s[\eta])dr\\
&+\int_s^t \nabla_{b(\cdot,r)}\tilde{\Phi}_{r}(m_r[\omega]-m_s[\eta])dr\\
&+\frac{1}{2}\underset{i\leq d}{\sum}\int_s^t\nabla^2_{k_i(\cdot,r)}\tilde{\Phi}_{r}(m_r[\omega]-m_s[\eta]) dr\\
&+\underset{i\leq d}{\sum}\int_s^t\nabla_{k_i(\cdot,r)}\tilde{\Phi}_{r}(m_r[\omega]-m_s[\eta])dB^{i,s,\eta}_r,\quad \mathbbm{Q}^{s,\eta}\text{ a.s.}
\end{array}
\end{equation}

\end{theorem}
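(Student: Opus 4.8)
The plan is to derive \eqref{EqThViens} as a direct application, under each $\mathbbm{Q}^{s,\eta}$, of the functional It\^o (chain rule) formula of \cite{ViensZh}, Theorem 3.9, to the shifted forward process $\tilde{X}^{s,\eta}:=X-m_s[\eta]$.

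First I would fix $(s,\eta)$ and record the probabilistic input. By Remark \ref{ResumeQ}, under $\mathbbm{Q}^{s,\eta}$ there is a Brownian motion $B^{s,\eta}$ starting at $s$ for which, on $[s,T]$, the process $\tilde{X}^{s,\eta}$ is the solution of the Volterra SDE with deterministic drift kernel $b$ and diffusion kernel $k$; by Hypothesis \ref{Hypbk} both kernels are bounded Borel, vanish on the appropriate sub-triangle, are continuous there with a bounded right-derivative, and $t\mapsto\int_0^tb(t,r)dr$ is continuous. These are exactly the structural assumptions on the kernels under which the chain rule of \cite{ViensZh}, Section 3.1, is stated, and the regularity class $\mathcal{C}_+^{1,2}(\bar{\Lambda})$ of Definition \ref{DefViens}, together with the polynomial growth requirements built into $\mathcal{D}(\tilde{A})$, was deliberately set up to coincide with the class of test functionals used there. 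So one is in a position to invoke their Theorem 3.9.

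Second, I would identify the argument $m_r[\omega]-m_s[\eta]$, $r\in[s,T]$, occurring in \eqref{EqThViens} with the path process to which \cite{ViensZh}, Theorem 3.9, applies. Indeed, for $r\geq s$ the paths $m_r[\omega]$ and $m_s[\eta]$ differ from the mean functions $v\mapsto\mathbbm{E}^{r,\omega}[X_v]$ of $\mathbbm{Q}^{r,\omega}$ and $v\mapsto\mathbbm{E}^{s,\eta}[X_v]$ of $\mathbbm{Q}^{s,\eta}$ by the same deterministic path $\beta$, so the canonical-class identity (Remark \ref{C7Borel} applied to the canonical class $(\mathbbm{Q}^{s,\eta})_{(s,\eta)}$) yields, for $\mathbbm{Q}^{s,\eta}$-a.e. $\omega$, that $m_r[\omega]-m_s[\eta]$ equals the path $v\mapsto\mathbbm{E}^{s,\eta}[X_v\mid\mathcal{F}^o_r](\omega)-\mathbbm{E}^{s,\eta}[X_v]$, i.e. the centered conditional-expectation (prediction) process of $X$ under $\mathbbm{Q}^{s,\eta}$, equivalently that of $\tilde{X}^{s,\eta}$. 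This path is continuous, so $(r,m_r[\omega]-m_s[\eta])\in\bar{\Lambda}$; and since $\mathbbm{Q}^{s,\eta}(\omega^s=\eta^s)=1$ and $m_s[\cdot]$ depends only on the restriction to $[0,s]$, at $r=s$ it equals $0$ $\mathbbm{Q}^{s,\eta}$-a.s., which matches the initial value $\tilde{\Phi}_s(0)$ in \eqref{EqThViens}. Plugging $\tilde{\Phi}\in\mathcal{C}_+^{1,2}(\bar{\Lambda})$ and this process on $[s,T]$ into \cite{ViensZh}, Theorem 3.9, then produces \eqref{EqThViens}: the time-derivative term $\int_s^tD\tilde{\Phi}_r\,dr$, the first-order term $\int_s^t\nabla_{b(\cdot,r)}\tilde{\Phi}_r\,dr$ along the drift direction $b(\cdot,r)$, the second-order term $\frac{1}{2}\sum_{i\leq d}\int_s^t\nabla^2_{k_i(\cdot,r)}\tilde{\Phi}_r\,dr$ along the columns of the diffusion kernel, and the martingale term $\sum_{i\leq d}\int_s^t\nabla_{k_i(\cdot,r)}\tilde{\Phi}_r\,dB^{i,s,\eta}_r$, all functionals being evaluated at $m_r[\omega]-m_s[\eta]$.

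The main obstacle is not a fresh estimate but the careful dictionary between the two formalisms. One must check that Hypothesis \ref{Hypbk} on $(b,k)$ and membership of $\tilde{\Phi}$ in $\mathcal{C}_+^{1,2}(\bar{\Lambda})$, in particular the polynomial growth of $\tilde{\Phi},D\tilde{\Phi},\nabla\tilde{\Phi},\nabla^2\tilde{\Phi}$ and the local-uniform modulus-of-continuity bound on $\nabla^2\tilde{\Phi}$, do supply every hypothesis of \cite{ViensZh}, Theorem 3.9 --- these being exactly the ingredients used in their proof to define and control the stochastic integral and to justify the dominated passages to the limit --- and that the object $r\mapsto m_r[\omega]-m_s[\eta]$ is genuinely the path process their statement refers to, with the drift direction $b(\cdot,r)$ and the diffusion directions $k_i(\cdot,r)$ entering as asserted. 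The integrability of $\sup_{r\in[0,T]}\|X_r\|$ under $\mathbbm{Q}^{s,\eta}$ that is used throughout comes for free from Proposition \ref{supX}, since $\mathbbm{Q}^{s,\eta}$ is a translate of $\mathbbm{P}^s$, so no additional probabilistic work is needed.
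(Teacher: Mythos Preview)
Your proposal is correct and takes essentially the same approach as the paper: the paper does not give a self-contained proof but simply records that Theorem \ref{ThViens} is Theorem 3.9 in \cite{ViensZh} applied, under $\mathbbm{Q}^{s,\eta}$, to the Volterra process $\tilde{X}^{s,\eta}=X-m_s[\eta]$ which by Remark \ref{ResumeQ} satisfies $\tilde{X}^{s,\eta}_t=\int_s^t b(t,r)\,dr+\int_s^t k(t,r)\,dB^{s,\eta}_r$. You follow exactly this route and in fact supply more detail than the paper itself, namely the explicit identification of $m_r[\omega]-m_s[\eta]$ with the prediction path of $\tilde{X}^{s,\eta}$ via the canonical-class property, and the observation that Hypothesis \ref{Hypbk} and the class $\mathcal{C}_+^{1,2}(\bar{\Lambda})$ of Definition \ref{DefViens} are tailored to the assumptions of \cite{ViensZh}.
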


\begin{proposition}\label{PropWeakGen}
We suppose the validity of Hypotheses \ref{HypGaussProba} and \ref{Hypbk}. 
Then \\
$(\mathbbm{Q}^{s,\eta})_{(s,\eta)\in[0,T]\times\Omega}$ solves the martingale problem associated to $(\mathcal{D}(A),A)$, see Definition \ref{MPop}. Moreover, $(\mathcal{D}(A),A)$ is a weak generator of $(P_s)_{s\in[0,T]}$, see Definition \ref{WeakGen}.
\end{proposition}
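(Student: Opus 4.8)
The plan is to invoke Proposition \ref{MPopWellPosed}, which reduces everything to the first assertion: once we know that $(\mathbbm{Q}^{s,\eta})_{(s,\eta)\in[0,T]\times\Omega}$ solves the martingale problem associated to $(\mathcal{D}(A),A)$, it follows automatically that $(\mathcal{D}(A),A)$ is a weak generator of the path-dependent system of projectors attached to that family, which is exactly $(P_s)_{s\in[0,T]}$ by Notation \ref{N45}. So I fix $(s,\eta)\in[0,T]\times\Omega$ and $\Phi=\tilde{\Phi}\circ m\in\mathcal{D}(A)$ with $\tilde{\Phi}\in\mathcal{D}(\tilde{A})\subset\mathcal{C}_+^{1,2}(\bar{\Lambda})$. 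The first bullet of Definition \ref{MPop}, namely $\mathbbm{Q}^{s,\eta}(\omega^s=\eta^s)=1$, is part of Remark \ref{ResumeQ}; and Proposition \ref{P413} tells us $(\mathcal{D}(A),A)$ fulfils Hypothesis \ref{HypDA}, so the integrability required even to state the martingale property is in place. It remains to prove that $\Phi-\int_0^{\cdot}A(\Phi)_r\,dr$ is, on $[s,T]$, a $(\mathbbm{Q}^{s,\eta},\mathbbm{F}^o)$-martingale.

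The key step is to feed the It\^o/chain rule of Theorem \ref{ThViens} into this; the obstacle is that that theorem is stated for $\tilde{\Phi}_{t}(m_t[\omega]-m_s[\eta])$, whereas $\Phi_t(\omega)=\tilde{\Phi}_{t}(m_t[\omega])$, so I would first perform a fixed translation. Set $\tilde{\Psi}_{t}(\gamma):=\tilde{\Phi}_{t}(\gamma+m_s[\eta])$. Since $m_s[\eta]\in\Omega$ is a fixed continuous path with $\|m_s[\eta]\|_{\infty}<\infty$, adding it to the spatial argument is an isometry that maps $\bar{\Lambda}$ into itself, leaves the time derivative $D$ and all the directional derivatives $\nabla_{\cdot},\nabla^2_{\cdot}$ unchanged (the translation cancels in the defining increments), and preserves the polynomial-growth bounds and the modulus-of-continuity estimate of Definition \ref{DefViens} up to harmless constants; hence $\tilde{\Psi}\in\mathcal{C}_+^{1,2}(\bar{\Lambda})$. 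Applying Theorem \ref{ThViens} to $\tilde{\Psi}$, and using $\tilde{\Psi}_t(m_t[\omega]-m_s[\eta])=\tilde{\Phi}_t(m_t[\omega])=\Phi_t(\omega)$ together with $D\tilde{\Psi}_r(m_r[\omega]-m_s[\eta])=D\tilde{\Phi}_r(m_r[\omega])$ and the analogous identities for the $b(\cdot,r)$- and $k_i(\cdot,r)$-derivatives, I obtain for all $t\geq s$, $\mathbbm{Q}^{s,\eta}$-a.s.,
\begin{equation*}
\Phi_{t}=\tilde{\Psi}_s(0)+\int_s^tA(\Phi)_r\,dr+\sum_{i\leq d}\int_s^t\nabla_{k_i(\cdot,r)}\tilde{\Phi}_{r}(m_r[\omega])\,dB^{i,s,\eta}_r,
\end{equation*}
where the $dr$-integrand is identified with $A(\Phi)_r$ via Definition \ref{DefDA}, and where $\tilde{\Psi}_s(0)=\tilde{\Phi}_s(m_s[\eta])=\tilde{\Phi}_s(m_s[\omega])=\Phi_s$ holds $\mathbbm{Q}^{s,\eta}$-a.s. because $\omega^s=\eta^s$ a.s. and $m_s$ factors through $\pi_s$. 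Thus on $[s,T]$, up to the a.s.\ constant $\Phi_s$, $\Phi-\int_0^{\cdot}A(\Phi)_r\,dr$ coincides with the stochastic integral against $B^{s,\eta}$ above.

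It then remains to upgrade this stochastic integral from a local to a genuine martingale and to settle the filtration issue. Since $\tilde{\Phi}\in\mathcal{D}(\tilde{A})$, each map $(r,\omega)\mapsto\nabla_{k_i(\cdot,r)}\tilde{\Phi}_{r}(m_r[\omega])$ has polynomial growth (this is precisely one of the extra requirements built into $\mathcal{D}(\tilde{A})$ in Definition \ref{DefDA}), and $\sup_{r\leq T}\|X_r\|\in\mathcal{L}^p(\mathbbm{Q}^{s,\eta})$ for every $p$ by Proposition \ref{supX} together with the fact that $\mathbbm{Q}^{s,\eta}$ is a translate of $\mathbbm{P}^s$, exactly as used in the proof of Proposition \ref{P413}; hence the integrand lies in $\mathcal{L}^2(dr\otimes d\mathbbm{Q}^{s,\eta})$ on $[s,T]$ and the stochastic integral is a true $(\mathbbm{Q}^{s,\eta},\mathbbm{F}^{s,\eta})$-martingale there. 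Finally, since $\Phi-\int_0^{\cdot}A(\Phi)_r\,dr$ is $\mathbbm{F}^o$-adapted (Lemma \ref{linkderiv} and Proposition \ref{P413}) and $\mathbbm{F}^o\subset\mathbbm{F}^{s,\eta}$, conditioning on $\mathbbm{F}^o$ shows it is also a $(\mathbbm{Q}^{s,\eta},\mathbbm{F}^o)$-martingale on $[s,T]$; this establishes the martingale problem, and Proposition \ref{MPopWellPosed} then yields the weak-generator statement. I expect the only genuinely delicate points to be the translation step --- verifying that $\tilde{\Psi}$ stays in $\mathcal{C}_+^{1,2}(\bar{\Lambda})$ and that $\Phi_s=\tilde{\Psi}_s(0)$ holds $\mathbbm{Q}^{s,\eta}$-a.s.\ --- and the true-martingale/filtration bookkeeping just outlined; the algebraic identification of the drift with $A(\Phi)$ is then immediate from the definitions.
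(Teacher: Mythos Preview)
Your proposal is correct and follows essentially the same route as the paper's own proof: the translation $\tilde{\Psi}_t(\gamma)=\tilde{\Phi}_t(\gamma+m_s[\eta])$ is exactly the paper's $\tilde{\Phi}^{s,\eta}$, and both arguments apply Theorem \ref{ThViens} to this shifted functional, identify the drift with $A(\Phi)$ via Definition \ref{DefDA}, and then upgrade the stochastic integral to a true martingale using the polynomial-growth requirement on $(t,\omega)\mapsto\nabla_{k_i(\cdot,t)}\tilde{\Phi}_t(m_t[\omega])$ together with the moment bounds of Proposition \ref{supX}. Your treatment of the $\mathbbm{F}^{s,\eta}$ versus $\mathbbm{F}^o$ filtration issue is slightly more explicit than the paper's, but the content is the same.
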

\begin{proof}
We fix $(s,\eta)$. The first item of Definition \ref{MPop} holds by construction of $(\mathbbm{Q}^{s,\eta})_{(s,\eta)\in[0,T]\times\Omega}$, see Proposition \ref{CondExp} 2. 

We now fix $\Phi:(t,\omega)\longmapsto\tilde{\Phi}_{t}(m_t[\omega])\in\mathcal{D}(A)$ with $\tilde{\Phi}\in\mathcal{C}_+^{1,2}(\bar{\Lambda})$. It is not hard to see that $\tilde{\Phi}^{s,\eta}:(t,\omega)\mapsto\tilde{\Phi}_{t}(\omega+m_s[\eta])$ also belongs to $\mathcal{C}_+^{1,2}(\bar{\Lambda})$ with
\begin{itemize}
\item  $D\tilde{\Phi}^{s,\eta}_{t}(\omega)=D\tilde{\Phi}_{t}(\omega+m_s[\eta])$,
\item  $\nabla_{b(\cdot,t)}\tilde{\Phi}^{s,\eta}_{t}(\omega)=\nabla_{b(\cdot,t)}\tilde{\Phi}_{t}(\omega+m_s[\eta])$,
\item $\nabla^2_{k_i(\cdot,t)}\tilde{\Phi}^{s,\eta}_{t}(\omega)=\nabla^2_{k_i(\cdot,t)}\tilde{\Phi}_{t}(\omega+m_s[\eta]),i\leq d$,
\end{itemize}
for all $(t,\omega)$.
Applying  Theorem \ref{ThViens} to $\tilde{\Phi}^{s,\eta}$, we obtain
\begin{equation}\label{DecompoPhi}
\begin{array}{rl}
\tilde{\Phi}_{t}(m_t[\cdot])=&\tilde{\Phi}_{s}(m_s[\eta])+\int_s^t\left(D\tilde{\Phi}_{r}+\nabla_{b(\cdot,r)}\tilde{\Phi}_{r}+\frac{1}{2}\underset{i\leq d}{\sum}\nabla^2_{k_i(\cdot,r)}\tilde{\Phi}_{r}\right)(m_r)dr\\
&+\underset{i\leq d}{\sum}\int_s^t\nabla_{k_i(\cdot,r)}\tilde{\Phi}_{r}(m_r)dB^{i,s,\eta}_r,\quad t\in[s,T], 
\end{array}
\end{equation}
in the sense of $\mathbbm{Q}^{s,\eta}$-indistinguishability.
Therefore, by definition of $\Phi$  and
 $A$ in Definition \ref{DefDA}, 
$\Phi-\int_0^{\cdot}A\Phi_rdr$ is on $[s,T]$ a $(\mathbbm{Q}^{s,\eta},\mathbbm{F}^o)$-local martingale, since it is indistinguishable, from $\Phi_s(\eta) +\underset{i\leq d}{\sum} \int_s^{\cdot}\nabla_{k_i(\cdot,r)}\tilde{\Phi}_{r}(m_r)  dB^{i,s,\eta}_r$.

Since for all $i$, $(t,\omega)\mapsto \nabla_{k_i(\cdot,r)}\tilde{\Phi}_{r}(m_r[\omega])$
 is assumed to have polynomial growth and $\underset{t\in[s,T]}{\text{sup }}X_t\in L^2(\mathbbm{Q}^{s,\eta})$ then, for all $i$, 
$\underset{t\in[s,T]}{\text{sup }}|\nabla_{k_i(\cdot,r)}\tilde{\Phi}_{r}(m_r)|\in L^2(\mathbbm{Q}^{s,\eta})$,
 therefore finally $\underset{i\leq d}{\sum}\int_s^{\cdot}\nabla_{k_i(\cdot,r)}\tilde{\Phi}_{r}(m_r) dB^{i,s,\eta}_r$ is a martingale. So $\Phi-\int_0^{\cdot}A\Phi_rdr$ is on $[s,T]$ a $(\mathbbm{Q}^{s,\eta},\mathbbm{F}^o)$-martingale. Since this holds for any $(s,\eta)$ and $\Phi\in\mathcal{D}(A)$ then  $(\mathbbm{Q}^{s,\eta})_{(s,\eta)\in[0,T]\times\Omega}$ solves the martingale problem associated to $(\mathcal{D}(A),A)$.

The second part of statement follows by Proposition \ref{MPopWellPosed}.
\end{proof}

\begin{notation}\label{NotMPhi}
For every $\tilde \Phi \in {\mathcal D}(\tilde A)$
and $(s,\eta)\in[0,T]\times \Omega$, we denote $M[\Phi]^{s,\eta}$  the continuous  $\mathbbm{Q}^{s,\eta}$-martingale 
$\Phi-\Phi_s(\eta)-\int_s^{\cdot}A\Phi_rdr,$ indexed by $[s,T]$.
\end{notation}

A direct consequence of Lemma 3.14 in \cite{paperPathDep}, 
 taking into account  Notation \ref{NotMPhi}, 
is the following.
\begin{corollary}\label{CoroBracketGamma}
Let $\tilde \Phi, \tilde \Psi \in \mathcal{D}(A).$
 Then for all $(s,\eta)$
\begin{equation}
\langle M[\Phi]^{s,\eta},M[\Psi]^{s,\eta}\rangle = \underset{i\leq d}{\sum}\int_s^{\cdot}\nabla_{k_i(\cdot,r)}\tilde{\Phi}_{r}(m_r)\nabla_{k_i(\cdot,r)}\tilde{\Psi}_{r}(m_r)dr,
\end{equation}
with respect to ${\mathbb Q}^{s,\eta}$.

\end{corollary}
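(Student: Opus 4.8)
The plan is to combine two facts already available: the martingale problem solved by $(\mathbbm{Q}^{s,\eta})_{(s,\eta)\in[0,T]\times\Omega}$, established in Proposition \ref{PropWeakGen}, and Lemma 3.14 of \cite{paperPathDep}, which expresses the angular bracket of the martingales $M[\Phi]^{s,\eta}$ of Notation \ref{NotMPhi} through the square-field operator. First I would record the (notational) fact that $\tilde\Phi,\tilde\Psi$ belong to $\mathcal{D}(\tilde A)$ and $\Phi,\Psi$ are the associated elements of $\mathcal{D}(A)$ in the sense of \eqref{EPhiTilde}. By Remark \ref{R415} both $\mathcal{D}(\tilde A)$ and $\mathcal{D}(A)$ are algebras, hence $\tilde\Phi\tilde\Psi\in\mathcal{D}(\tilde A)$, $\Phi\Psi\in\mathcal{D}(A)$, and $\Phi\Psi$ equals $\tilde\Phi\tilde\Psi$ precomposed with $m$, so $A(\Phi\Psi)$ is well defined. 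By Proposition \ref{P413} the couple $(\mathcal{D}(A),A)$ satisfies Hypothesis \ref{HypDA}, and by Proposition \ref{PropWeakGen} the family $(\mathbbm{Q}^{s,\eta})_{(s,\eta)}$ solves the martingale problem associated with it; moreover, exactly as in the proof of Proposition \ref{PropWeakGen} --- polynomial growth of the relevant derivatives combined with $\sup_{r\in[0,T]}|X_r|\in\mathcal{L}^p(\mathbbm{Q}^{s,\eta})$ (Proposition \ref{supX}) --- the processes $M[\Phi]^{s,\eta}$ and $M[\Psi]^{s,\eta}$ are genuine square-integrable $\mathbbm{Q}^{s,\eta}$-martingales, so their bracket is well defined. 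Under these conditions Lemma 3.14 of \cite{paperPathDep} applies and yields, with respect to $\mathbbm{Q}^{s,\eta}$,
\[
\langle M[\Phi]^{s,\eta},M[\Psi]^{s,\eta}\rangle=\int_s^{\cdot}\big(A(\Phi\Psi)-\Phi A(\Psi)-\Psi A(\Phi)\big)_r\,dr ,
\]
i.e.\ the time-integral of the bilinear carré du champ operator $\Gamma(\Phi,\Psi)$.

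The second step is to compute $\Gamma(\Phi,\Psi)=A(\Phi\Psi)-\Phi A(\Psi)-\Psi A(\Phi)$ explicitly. Since, by Definition \ref{DefDA}, $A$ is $\tilde A$ precomposed with $m$ and $\Phi\Psi$ is $\tilde\Phi\tilde\Psi$ precomposed with $m$, it suffices to evaluate $\tilde A(\tilde\Phi\tilde\Psi)-\tilde\Phi\,\tilde A\tilde\Psi-\tilde\Psi\,\tilde A\tilde\Phi$ and then precompose with $m$. For this I would invoke the Leibniz rules for the three building blocks of $\tilde A$ in \eqref{IntroDefA}, all of which follow immediately from the definitions of $D$, $\nabla_h$, $\nabla^2_h$ as limits in Definition \ref{DefViens}, using that $\tilde\Phi$, $\tilde\Psi$ and $\tilde\Phi\tilde\Psi$ all lie in $\mathcal{C}_+^{1,2}(\bar\Lambda)$ (Remark \ref{R415}), so that every derivative below exists: $D(\tilde\Phi\tilde\Psi)=\tilde\Phi D\tilde\Psi+\tilde\Psi D\tilde\Phi$, $\nabla_h(\tilde\Phi\tilde\Psi)=\tilde\Phi\nabla_h\tilde\Psi+\tilde\Psi\nabla_h\tilde\Phi$, and the second-order rule $\nabla^2_h(\tilde\Phi\tilde\Psi)=\tilde\Phi\nabla^2_h\tilde\Psi+2\,\nabla_h\tilde\Phi\,\nabla_h\tilde\Psi+\tilde\Psi\nabla^2_h\tilde\Phi$ (the last because $\nabla^2_h\Xi_t(\omega)$ is the honest second derivative of $\epsilon\mapsto\Xi_t(\omega+\epsilon h)$ at $\epsilon=0$). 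Substituting these into the definition of $\tilde A$, the first-order contributions --- the $D$-term and the $\nabla_{b(\cdot,t)}$-term --- cancel in pairs, and only the cross term of the second-order part survives:
\[
\tilde A(\tilde\Phi\tilde\Psi)_t-\tilde\Phi_t\,\tilde A\tilde\Psi_t-\tilde\Psi_t\,\tilde A\tilde\Phi_t=\frac12\sum_{i\leq d}2\,\nabla_{k_i(\cdot,t)}\tilde\Phi_t\,\nabla_{k_i(\cdot,t)}\tilde\Psi_t=\sum_{i\leq d}\nabla_{k_i(\cdot,t)}\tilde\Phi_t\,\nabla_{k_i(\cdot,t)}\tilde\Psi_t .
\]
Precomposing with $m$ gives $\big(A(\Phi\Psi)-\Phi A(\Psi)-\Psi A(\Phi)\big)_r(\omega)=\sum_{i\leq d}\nabla_{k_i(\cdot,r)}\tilde\Phi_r(m_r[\omega])\,\nabla_{k_i(\cdot,r)}\tilde\Psi_r(m_r[\omega])$; inserting this into the bracket identity of the first step finishes the proof.

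Being a corollary, this should present no genuine obstacle, but two points deserve care. The decisive one is the second-order Leibniz rule: the factor $2$ in the cross term $2\,\nabla_h\tilde\Phi\,\nabla_h\tilde\Psi$, cancelled against the $\tfrac12$ in front of $\sum_i\nabla^2_{k_i}$ in $\tilde A$, is exactly what produces the claimed right-hand side, so it must not be dropped or doubled. The other is to confirm that the integrability needed for all the brackets to be honestly defined (polynomial growth of the derivatives plus Proposition \ref{supX}) is in place; this was however already checked in the proof of Proposition \ref{PropWeakGen}. As a self-contained cross-check one can avoid the citation altogether and read the result straight off identity \eqref{DecompoPhi} from that proof, which exhibits $M[\Phi]^{s,\eta}=\sum_{i\leq d}\int_s^{\cdot}\nabla_{k_i(\cdot,r)}\tilde\Phi_r(m_r)\,dB^{i,s,\eta}_r$, and likewise for $\Psi$, with $B^{s,\eta}$ a standard $d$-dimensional Brownian motion by Proposition \ref{FiltRichtCont}; since then $\langle B^{i,s,\eta},B^{j,s,\eta}\rangle_r=\delta_{ij}(r-s)$ for $r\geq s$, the usual covariation formula for stochastic integrals against Brownian motion gives precisely $\sum_{i\leq d}\int_s^{\cdot}\nabla_{k_i(\cdot,r)}\tilde\Phi_r(m_r)\,\nabla_{k_i(\cdot,r)}\tilde\Psi_r(m_r)\,dr$.
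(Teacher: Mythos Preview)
Your proposal is correct and follows essentially the same route as the paper, which simply records the corollary as ``a direct consequence of Lemma 3.14 in \cite{paperPathDep}, taking into account Notation \ref{NotMPhi}''. You have in fact written out more than the paper does here: the explicit Leibniz-rule computation of $\Gamma(\Phi,\Psi)$ that you carry out is exactly the content of the subsequent Proposition \ref{ExprGamma}, and your alternative self-contained argument via the representation \eqref{DecompoPhi} and the Brownian covariation formula is a nice independent check that the paper does not spell out.
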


The following bilinear operator was introduced  in \cite{paperPathDep} 
in a general path-dependent framework.

\begin{notation}\label{C7NotGamma}
	Let $\Phi,\Psi\in\mathcal{D}(A)$. We denote by $\Gamma(\Phi,\Psi)$ the process 
$A(\Phi\Psi)-\Phi A(\Psi)-\Psi A(\Phi)$. If $\Phi$ or $\Psi$ is multidimensional, then 
we define 
 $\Gamma(\Phi,\Psi)$ as a vector or matrix, coordinate by coordinate.
		$\Gamma(\Phi,\Phi)$ will be denoted $\Gamma(\Phi)$.
\end{notation}
$\Gamma$ can be interpreted as a path-dependent extension of the concept of carré du champ operator in the theory of Markov processes.
\begin{proposition}\label{ExprGamma}
 For every $\tilde{\Phi}, \tilde \Psi \in\mathcal{D}(\tilde A)$, we have 
$$\Gamma(\Phi,\Psi)_t(\omega)=\underset{i\leq d}{\sum}\nabla_{k_i(\cdot,t)}\tilde{\Phi}_{t}\nabla_{k_i(\cdot,t)}\tilde{\Phi}_{t}(m_t[\omega]),  \forall
         t,\omega.$$

\end{proposition}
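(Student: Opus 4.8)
The plan is to compute $\Gamma(\Phi,\Psi) = A(\Phi\Psi) - \Phi A(\Psi) - \Psi A(\Phi)$ directly from the definitions, reducing everything to the level of the operator $\tilde A$ acting on the composition $\tilde\Phi\tilde\Psi$. First I would observe that, by Remark \ref{R415}, $\mathcal{D}(\tilde A)$ and $\mathcal{D}(A)$ are linear algebras, so $\Phi\Psi = \widetilde{\Phi\Psi}\circ m$ with $\widetilde{\Phi\Psi}=\tilde\Phi\tilde\Psi\in\mathcal{D}(\tilde A)$, and hence $A(\Phi\Psi)_t(\omega) = \tilde A(\tilde\Phi\tilde\Psi)_t(m_t[\omega])$. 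Thus the whole identity lives pointwise at the argument $m_t[\omega]$, and it suffices to prove the corresponding statement for $\tilde A$: namely that for all $(t,\omega)\in\bar\Lambda$,
\begin{equation*}
\tilde A(\tilde\Phi\tilde\Psi)_t - \tilde\Phi_t\,\tilde A(\tilde\Psi)_t - \tilde\Psi_t\,\tilde A(\tilde\Phi)_t = \sum_{i\leq d}\nabla_{k_i(\cdot,t)}\tilde\Phi_t\,\nabla_{k_i(\cdot,t)}\tilde\Psi_t .
\end{equation*}

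The key step is then a Leibniz-rule computation for each of the three constituent operators of $\tilde A$ in \eqref{IntroDefA}. For the time derivative $D$, which is a one-sided ordinary derivative in $\epsilon$ of $\epsilon\mapsto \tilde\Phi_{t+\epsilon}(\omega)\tilde\Psi_{t+\epsilon}(\omega)$, the usual product rule gives $D(\tilde\Phi\tilde\Psi)_t = \tilde\Phi_t D\tilde\Psi_t + \tilde\Psi_t D\tilde\Phi_t$, so this term contributes nothing to $\Gamma$. Likewise the first-order G\^ateaux derivative $\nabla_{b(\cdot,t)}$, being a directional derivative, satisfies $\nabla_{b(\cdot,t)}(\tilde\Phi\tilde\Psi)_t = \tilde\Phi_t\nabla_{b(\cdot,t)}\tilde\Psi_t + \tilde\Psi_t\nabla_{b(\cdot,t)}\tilde\Phi_t$ and also drops out. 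The only surviving contribution comes from the second-order terms $\tfrac12\sum_i \nabla^2_{k_i(\cdot,t)}$: applying the product rule twice to $\epsilon\mapsto \tilde\Phi_t(\omega+\epsilon k_i(\cdot,t))\tilde\Psi_t(\omega+\epsilon k_i(\cdot,t))$ yields $\nabla^2_{k_i(\cdot,t)}(\tilde\Phi\tilde\Psi)_t = \tilde\Phi_t\nabla^2_{k_i(\cdot,t)}\tilde\Psi_t + \tilde\Psi_t\nabla^2_{k_i(\cdot,t)}\tilde\Phi_t + 2\nabla_{k_i(\cdot,t)}\tilde\Phi_t\,\nabla_{k_i(\cdot,t)}\tilde\Psi_t$. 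Summing over $i$, multiplying by $\tfrac12$, and subtracting $\tilde\Phi_t\tilde A(\tilde\Psi)_t + \tilde\Psi_t\tilde A(\tilde\Phi)_t$ leaves exactly $\sum_{i\leq d}\nabla_{k_i(\cdot,t)}\tilde\Phi_t\,\nabla_{k_i(\cdot,t)}\tilde\Psi_t$, which is the claim. Finally I would note that the right-hand side is indeed well-defined for $\tilde\Phi,\tilde\Psi\in\mathcal{D}(\tilde A)$ since by definition of $\mathcal{D}(\tilde A)$ the processes $\nabla_{k_i(\cdot,t)}\tilde\Phi_t(m_t[\omega])$ have polynomial growth, and conclude by composing with $m_t[\omega]$; an alternative, essentially equivalent route is to quote Corollary \ref{CoroBracketGamma} together with the general fact (from \cite{paperPathDep}) that $\Gamma$ coincides with the density of the bracket $\langle M[\Phi]^{s,\eta},M[\Psi]^{s,\eta}\rangle$.

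The computation is entirely routine; the only point requiring a little care — and the place I would be most careful — is checking that the one-sided limits defining $D$, $\nabla_{b(\cdot,t)}$ and $\nabla^2_{k_i(\cdot,t)}$ genuinely obey the Leibniz rules for products of two functionals in $\mathcal{C}^{1,2}_+(\bar\Lambda)$, i.e. that the relevant limits exist for the product because they exist for each factor and the factors are continuous. This follows from $\tilde\Phi,\tilde\Psi\in\mathcal{C}^0(\bar\Lambda)$ together with the existence of the individual derivatives, but it is worth spelling out at least for $\nabla^2$, where one must expand $(\tilde\Phi\tilde\Psi)_t(\omega+\epsilon h) - 2(\tilde\Phi\tilde\Psi)_t(\omega) + (\tilde\Phi\tilde\Psi)_t(\omega-\epsilon h)$ and use continuity of $\nabla_h\tilde\Phi$, $\nabla_h\tilde\Psi$ to handle the cross terms in the limit $\epsilon\to 0$.
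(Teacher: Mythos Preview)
Your proposal is correct and follows exactly the paper's approach: the paper's proof is a single line stating that the result ``directly follows from the fact that $D$ and for $\zeta$, $\nabla_{\zeta}$ verify the usual product rules,'' and you have simply unpacked this Leibniz-rule computation in detail. Your added care about the one-sided limits and the well-definedness of the right-hand side is welcome but not strictly needed beyond what the paper already assumes for $\mathcal{C}_+^{1,2}(\bar\Lambda)$.
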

\begin{proof}
This directly follows from the fact that $D$ and for $\zeta$, $\nabla_{\zeta}$  verify the usual product rules.
\end{proof}

\subsection{Construction  of the driving martingale for the BSDE}\label{S43}

\begin{notation}\label{L2uni}	
	We will indicate by $dt\otimes d\mathbbm{Q}^{s,\eta}$ the measure on $\mathcal{B}([0,T])\otimes\mathcal{F}$ defined by $dt\otimes d\mathbbm{Q}^{s,\eta}(C)=\mathbbm{E}^{s,\eta}\left[\int_s^T\mathds{1}_C(r,\omega)dr\right]$, and by $\mathcal{L}^2(dt\otimes d\mathbbm{Q}^{s,\eta})$  the space of  $\mathbbm{F}^{s,\eta}$-progressively measurable processes $Y$ such that $\mathbbm{E}^{s,\eta}\left[\int_s^{T} |Y_r|^2dr\right]< \infty$. 
	
	$\mathcal{H}^2_0(\mathbbm{Q}^{s,\eta})$ will denote the space of $(\mathbbm{Q}^{s,\eta},\mathbbm{F}^{s,\eta})$-square integrable martingales vanishing at time $s$, hence on the interval  $[0,s]$ since it is $\mathbbm{Q}^{s,\eta}$ a.s. deterministic on $[0,s[$,   see Proposition \ref{CoroTrivial}.
	The elements of that space will
	be identified 
	up to indistinguishability with respect to $\mathbbm{Q}^{s,\eta}$.
	
	We define $\mathcal{L}^2_{uni}$ as the linear space of $\mathbbm{F}^o$-progressively measurable processes belonging to $\mathcal{L}^2(dt\otimes d\mathbbm{Q}^{s,\eta})$ for all $(s,\eta)\in[0,T]\times\Omega$.
	Let  $\mathcal{N}$ be the linear subspace of $\mathcal{L}^2_{uni}$ constituted of elements which are equal to $0$ 
	$dt\otimes d\mathbbm{Q}^{s,\eta}$ a.e. for all $(s,\eta)\in[0,T]\times\Omega$. We denote  $L^2_{uni}:=\mathcal{L}^2_{uni}\backslash \mathcal{N}$.
\end{notation}

\begin{definition}\label{C7zeropotential} 
	A property will be said to hold \textbf{quasi surely},  abbreviated by q.s. if it holds everywhere but  in some $C\in \mathcal{P}ro^o$ such that $\mathbbm{E}^{s,\eta}\left[\int_s^{T} \mathds{1}_C(t,\omega)dt\right]=0$ for all $(s,\eta)$.
\end{definition}

\begin{notation}\label{mT}
From now on, $m^T:=(m^{T,1},\cdots,m^{T,d})$ will denote what we call 
\textbf{prediction process}
$(t,\omega)\mapsto m_t[\omega](T) = \mathbbm{E}^{t,\omega}[X_T]-\beta_T$,
in agreement with the 
last statement of Remark \ref{ResumeQ} 
 and Definition \ref{D42}.
 For all $(s,\eta)$ we 
introduce the $\mathbbm{R}^d$-valued $\mathbbm{Q}^{s,\eta}$-martingale 
$$m^{T,s,\eta}:=(m^{T,1,s,\eta},\cdots,m^{T,d,s,\eta}):=\int_{s}^{\cdot}k(T,r)dB^{s,\eta}_r,$$
 indexed by $[s,T]$ (extended by convention on $[0,s]$ with the value $0$), where we recall that $B^{s,\eta}$ is the Brownian motion introduced in Remark \ref{ResumeQ}.

\end{notation}


\begin{proposition}\label{mTprop}
	For all $i\leq d$, we have the following.
\begin{enumerate}

\item $m^{T,i}$ and $(m^{T,i})^2$ belong to $\mathcal{D}(A)$ with $A(m^{T,i})\equiv 0$ and 
$$\Gamma(m^{T,i})=\underset{j\leq d}{\sum}k_{i,j}^2(T,\cdot)=(kk^{\intercal})_{i,i}(T,\cdot), $$ which is  bounded;
\item 
for every $\tilde \Phi \in \tilde{\mathcal{D}}(\tilde A)$ we have 
 $\Gamma(\Phi,m^{T,i})_t(\omega)=\underset{j\leq d}{\sum}k_{i,j}(T,t)\nabla_{k_j(\cdot,t)}\tilde{\Phi}_{t}(m_t[\omega]),$ for all $t,\omega$;
\item for every $(s,\eta)$,  on $[s,T]$,
 $m^{T,i,s,\eta}$ is  $\mathbbm{Q}^{s,\eta}$-indistinguishable from\\ $m^{T,i}-m^{T,i}_s(\eta)$; it belongs to $\mathcal{H}^2_0(\mathbbm{Q}^{s,\eta})$ and  $\frac{d\langle m^{T,i,s,\eta}\rangle_t}{dt}=\underset{j\leq d}{\sum}k^2_{i,j}(T,t)$ is bounded $dt\otimes d\mathbbm{Q}^{s,\eta}$ a.e.;
\item $m^{T,i}\in\mathcal{L}^2_{uni}$.
\end{enumerate}
\end{proposition}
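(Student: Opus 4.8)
The plan is to treat the four items in order, with the first one carrying most of the work and the remaining three following as consequences. For item 1, I would exhibit $\tilde\Phi$ such that $m^{T,i} = \tilde\Phi \circ m$, namely $\tilde\Phi_t(\omega) := \omega^i(T)$ (the $i$-th coordinate of $\omega$ evaluated at the fixed terminal time $T$), so that $\tilde\Phi_t(m_t[\omega]) = m_t[\omega](T)^i = m^{T,i}_t(\omega)$. I would check that this $\tilde\Phi$ belongs to $\mathcal{C}_+^{1,2}(\bar\Lambda)$: it is continuous for $d$, $D\tilde\Phi \equiv 0$ since it does not depend on $t$, $\nabla_\eta\tilde\Phi_t(\omega) = \eta(T)$ is linear (hence its own Gâteaux derivative, and $\nabla^2_\eta\tilde\Phi \equiv 0$), all with polynomial (in fact linear) growth and trivially satisfying the modulus-of-continuity condition on $\nabla^2$. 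The polynomial growth conditions of Definition \ref{DefDA} on the composed derivatives hold because $m_t$ has operator norm bounded by $M_{op}$ (Proposition \ref{CondExp} 2.(e)) and the derivatives are linear in their directions $b(\cdot,t)$ or $k_i(\cdot,t)$, which are bounded by Hypothesis \ref{Hypbk}. Hence $m^{T,i} \in \mathcal{D}(A)$; then $A(m^{T,i})_t = D\tilde\Phi_t(m_t) + \nabla_{b(\cdot,t)}\tilde\Phi_t(m_t) + \tfrac12\sum_j \nabla^2_{k_j(\cdot,t)}\tilde\Phi_t(m_t) = 0 + b(\cdot,t)(T) + 0$. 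Here I must be slightly careful: $\nabla_{b(\cdot,t)}\tilde\Phi_t(\omega) = b(\cdot,t)(T)$, but $b(\cdot,t)$ vanishes on $[0,t[$ and we evaluate at $T \geq t$, so this is $b(T,t)$ — wait, with the convention $b(s,\cdot)$ vanishes on $[0,s[$, the argument "direction" $b(\cdot,t)$ is the function $r \mapsto b(r,t)$, and evaluated at $T$ gives $b(T,t)$. This is generally nonzero, so I must recheck: actually the correct reading is that $D\tilde\Phi$ picks up the $t$-dependence of $m_t$ through the Itô formula Theorem \ref{ThViens}, not of $\tilde\Phi$; the statement $A(m^{T,i}) \equiv 0$ should follow because $m^{T,i}_\cdot = \int_s^\cdot k(T,r)dB^{s,\eta}_r + m^{T,i}_s$ is a martingale under each $\mathbb{Q}^{s,\eta}$ (Notation \ref{mT}), so by the martingale problem (Proposition \ref{PropWeakGen}) the finite-variation part $A(m^{T,i})$ must vanish $dt\otimes d\mathbb{Q}^{s,\eta}$-a.e., hence q.s., hence identically by continuity. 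I would present it this way rather than via direct computation.

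For the $\Gamma$ formula in item 1, I apply Proposition \ref{ExprGamma} with $\Phi = \Psi = m^{T,i}$: $\Gamma(m^{T,i})_t(\omega) = \sum_{j\leq d} \nabla_{k_j(\cdot,t)}\tilde\Phi_t(m_t[\omega])\,\nabla_{k_j(\cdot,t)}\tilde\Phi_t(m_t[\omega])$, and since $\nabla_{k_j(\cdot,t)}\tilde\Phi_t(\omega) = k_j(\cdot,t)(T) = k_{i,j}(T,t)$ (the $(i,j)$ entry, because $\tilde\Phi$ extracts coordinate $i$ and direction $k_j$ is the $j$-th column), this gives $\sum_{j\leq d} k_{i,j}^2(T,t) = (kk^\intercal)_{i,i}(T,t)$, which is bounded by Hypothesis \ref{Hypbk}. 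That $(m^{T,i})^2 \in \mathcal{D}(A)$ follows from Remark \ref{R415} ($\mathcal{D}(A)$ is an algebra). Item 2 is again a direct application of Proposition \ref{ExprGamma} to the pair $(\Phi, m^{T,i})$: $\Gamma(\Phi, m^{T,i})_t(\omega) = \sum_{j\leq d} \nabla_{k_j(\cdot,t)}\tilde\Phi_t(m_t[\omega])\, \nabla_{k_j(\cdot,t)}(\,\cdot^i(T))_t(m_t[\omega]) = \sum_{j\leq d} k_{i,j}(T,t)\,\nabla_{k_j(\cdot,t)}\tilde\Phi_t(m_t[\omega])$, which is exactly the claimed expression.

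For item 3, the indistinguishability of $m^{T,i,s,\eta}$ with $m^{T,i} - m^{T,i}_s(\eta)$ on $[s,T]$ is read off from Remark \ref{ResumeQ}: under $\mathbb{Q}^{s,\eta}$, $X_T = m_s[\eta](T) + \beta_T + \int_s^T k(T,r)dB^{s,\eta}_r$, and $m^{T,i}_t(\omega) = m_t[\omega](T) = \mathbb{E}^{t,\omega}[X_T^i] - \beta_T^i = \mathbb{E}^{s,\eta}[X_T^i \mid \mathcal{F}_t] - \beta_T^i$ (using Remark \ref{C7Borel} / Hypothesis \ref{HypClass}), which on $[s,T]$ equals $m^{T,i}_s(\eta) + \int_s^\cdot k_i(T,r)dB^{s,\eta}_r = m^{T,i}_s(\eta) + m^{T,i,s,\eta}$. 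Then $\langle m^{T,i,s,\eta}\rangle_t = \sum_{j\leq d}\int_s^t k_{i,j}^2(T,r)dr$ by Itô isometry, so $\frac{d\langle m^{T,i,s,\eta}\rangle_t}{dt} = \sum_{j}k_{i,j}^2(T,t)$, bounded by Hypothesis \ref{Hypbk}; this bound and the fact that $\mathbb{Q}^{s,\eta}$ has all moments (Proposition \ref{supX}, plus $\mathbb{Q}^{s,\eta}$ being a translate of $\mathbb{P}^s$) give $m^{T,i,s,\eta} \in \mathcal{H}^2_0(\mathbb{Q}^{s,\eta})$. Finally item 4, $m^{T,i} \in \mathcal{L}^2_{uni}$, follows since $m^{T,i}$ is $\mathbb{F}^o$-progressively measurable (Lemma \ref{linkderiv} or Corollary \ref{Coro_m}) and $\mathbb{E}^{s,\eta}[\int_s^T |m^{T,i}_r|^2 dr] < \infty$ because $\sup_{r\in[0,T]}|m^{T,i}_r| \leq \sup_r \|m_r[\omega]\|_\infty \leq M_{op}\|\omega\|_\infty$ has all moments under $\mathbb{Q}^{s,\eta}$ by Proposition \ref{supX}. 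The only genuinely delicate point is the justification that $A(m^{T,i}) \equiv 0$ everywhere (not merely q.s.), for which I would invoke joint continuity of $A(m^{T,i})$ (from continuity of the derivatives composed with the continuous $m_t$) together with the q.s. vanishing coming from the martingale property and Proposition \ref{FullSupp} on the support of each $\mathbb{Q}^{s,\eta}$; I expect this to be the main obstacle, everything else being routine bookkeeping with the definitions.
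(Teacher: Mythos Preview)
Your overall plan matches the paper's proof closely, with two points worth noting.

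For item 1, the paper does \emph{not} take your indirect martingale route for $A(m^{T,i})\equiv 0$: it simply computes the derivatives of $\tilde\Phi_t(\omega)=X^i_T(\omega)$ directly (citing \cite{ViensZh}, Example 3.5) --- namely $D X^i_T\equiv 0$, $\nabla_{b(\cdot,t)}X^i_T=b_i(T,t)$, $\nabla_{k_j(\cdot,t)}X^i_T=k_{i,j}(T,t)$, $\nabla^2_{k_j(\cdot,t)}X^i_T=0$ --- and then asserts $A(m^{T,i})\equiv 0$. You are right that the $\nabla_b$ term is $b_i(T,t)$, which need not vanish, so the paper's direct claim is at least unexplained at this point; your alternative (first establish item 3 from the Volterra representation and Remark \ref{C7Borel}, then use Proposition \ref{PropWeakGen} plus uniqueness of the special semimartingale decomposition to get $A(m^{T,i})=0$ q.s., and finally pass to ``everywhere'' via continuity and Proposition \ref{FullSupp}) is a genuinely different and self-contained route. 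It buys you robustness against that apparent gap, at the cost of reordering items 1 and 3.

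For item 3, you are missing one step the paper makes explicit: the conditional-expectation computation only yields, for each fixed $t\in[s,T]$, that $m^{T,i}_t-m^{T,i}_s(\eta)=m^{T,i,s,\eta}_t$ $\mathbbm{Q}^{s,\eta}$-a.s., i.e.\ a \emph{modification}. To upgrade to indistinguishability on $[s,T]$ the paper uses that $m^{T,i,s,\eta}$ is $\mathbbm{Q}^{s,\eta}$-a.s.\ continuous while $t\mapsto m^{T,i}_t(\omega)$ is right-continuous for every $\omega$ by Proposition \ref{mcad}. For item 4, the paper argues slightly differently --- via Doob's inequality applied to $m^{T,i,s,\eta}\in\mathcal{H}^2_0(\mathbbm{Q}^{s,\eta})$ from item 3 --- but your operator-norm bound (using Proposition \ref{CondExp} 2.(e), up to the additive constant $\|\beta\|_\infty$ coming from the new convention on $m_s$) together with Proposition \ref{supX} works equally well.
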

\begin{remark}\label{RX0}
  Let $1 \le i \le d$.
For all $(t,\omega)$, by definition of canonical process, we get
$m^{T,i}_t(\omega) = X^i_T(m_t[\omega])$.
So $(t,\omega) \mapsto X^i_T(\omega)$ is the  $\tilde \Phi$ corresponding to $\Phi = m^{T,i}$.
    \end{remark}

\begin{proof}
We fix $i\leq d$.

 By Example 3.5 1. in \cite{ViensZh} 
both
 $(t,\omega)  \mapsto X^i_T(\omega), (X^i_T)^2(\omega)$ belong to $\mathcal{C}_+^{1,2}(\bar{\Lambda})$; also
 $D X^i_T\equiv 0$, $\nabla_{b(\cdot,t)} X^i_T(\omega)= b_i(T,t)$, $\nabla_{k_j(\cdot,t)} X^i_T(\omega)= k_{i,j}(T,t)$ for all $j\leq d; (t,\omega)$ and that $\nabla^2_{k_j(\cdot,t)}(X^i_T)_t(\omega)=0$ for all $j\leq d, (t,\omega)$.
Therefore, according to Definition \ref{DefDA},  $m^{T,i}$ belongs to $\mathcal{D}(A)$ and $A(m^{T,i})\equiv 0$. 
Since $(X^i_T)^2\in\mathcal{C}_+^{1,2}(\bar{\Lambda})$, then  by Proposition \ref{ExprGamma},
$\Gamma(m^{T,i})$
 is equal to $\underset{j\leq d}{\sum}k_{i,j}^2(T,\cdot)$ which is bounded by Hypothesis \ref{Hypbk}. This shows item 1.

The second statement also holds by Proposition \ref{ExprGamma}, and the fact 
that \\
$\nabla_{k_j(\cdot,t)} X^i_T(\omega)= k_{i,j}(T,t)$ for all $i,j\leq d$ and $(t,\omega)$.

Concerning the third statement, we have $X^i_T=m^{T,i}_s(\eta)+\beta^i_T+\int_{s}^{T}k_i(T,r)dB^{s,\eta}_r$ $\mathbbm{Q}^{s,\eta}$ a.s. (see Remark \ref{ResumeQ}) so that for all $t\geq s$,
taking into account Notation \ref{mT}
\begin{equation}
\begin{array}{rcl}
m^{T,i}_t(\omega)&=&\mathbbm{E}^{t,\omega}[X^i_T]-\beta^i_T\\
&=&\mathbbm{E}^{s,\eta}[X^i_T|\mathcal{F}^o_t](\omega)-\beta^i_T \quad \mathbbm{Q}^{s,\eta} \text{a.s.}\\
&=&m^{T,i}_s(\eta)+\beta^i_T-\beta^i_T+\mathbbm{E}^{s,\eta}[\int_{s}^{T}k_i(T,r)dB^{s,\eta}_r|\mathcal{F}^o_t](\omega) \quad \mathbbm{Q}^{s,\eta} \text{a.s.}\\
&=&m^{T,i}_s(\eta)+\int_{s}^{t}k_i(T,r)dB^{s,\eta}_r\quad \mathbbm{Q}^{s,\eta} \text{a.s.},
\end{array}
\end{equation}
where the second equality holds because $(\mathbbm{Q}^{s,\eta})$
is a path-dependent canonical class, taking into account 
  Remark \ref{C7Borel}. So, according to Notation 
 \ref{mT} $m^{T,i,s,\eta}$ is a 
$\mathbbm{Q}^{s,\eta}$-modification of $m^{T,i}-m^{T,i}_s(\eta)$.
 Since $m^{T,i,s,\eta}$ is $\mathbbm{Q}^{s,\eta}$-a.s. continuous and $t\mapsto m^{T,i}_t(\omega)-m^{T,i}_s(\eta)$ is right-continuous for all $\omega$ (see Proposition \ref{mcad}), then those processes are indistinguishable. 
The quadratic variation 
 of $m^{T,i,s,\eta}$ is $\underset{j\leq d}{\sum}\int_s^{\cdot}k_{i,j}^2(T,r)dr$ so $\frac{d\langle m^{T,i,s,\eta}\rangle_t}{dt}=\underset{j\leq d}{\sum}k_{i,j}^2(T,t)$ $dt\otimes d\mathbbm{Q}^{s,\eta}$ a.e. is indeed bounded a.e. since $k$ is bounded. $m^{T,i,s,\eta}$ is 
a square integrable martingale,  because its quadratic variation is bounded.

We now discuss the last statement. For all $(s,\eta)$, $m^{T,i,s,\eta}$ is on $[s,T]$ $\mathbbm{Q}^{s,\eta}$-indistinguishable from $m^{T,i}-m^{T,i}_s(\eta)$ therefore $m^{T,i}\in\mathcal{L}^2_{uni}$ if and only if for every $(s,\eta)$, $m^{T,i,s,\eta}\in\mathcal{L}^2(dt\otimes d\mathbbm{Q}^{s,\eta})$. This indeed holds,
since for every $(s,\eta)$, by statement 3., $m^{T,i,s,\eta}$ is a 
square integrable martingale, hence $\underset{r\in[s,T]}{\text{sup }}|m^{T,i,s,\eta}_r|\in\mathcal{L}^2(\mathbbm{Q}^{s,\eta})$ by Doob inequality.
\end{proof}

\subsection{The semilinear path-dependent PDE and associated BSDE}\label{S44}

We now introduce the path-dependent PDE that interests us. 
We consider some $\xi,f$ verifying the following hypothesis.
\begin{hypothesis}\label{C7HypBSDE}\
	\begin{enumerate}
		\item $\xi$ is a r.v. with polynomial growth;
		\item $f:([0,T]\times\Omega)\times\mathbbm{R}\times\mathbbm{R}\longmapsto \mathbbm{R}$ is measurable with respect to $\mathcal{P}ro^o\otimes\mathcal{B}(\mathbbm{R})\otimes\mathcal{B}(\mathbbm{R})$ and such that 
		\begin{enumerate}
			\item $f(\cdot,\cdot,0,0)$ has polynomial growth;
			\item there exists $K>0$ such that for all $(t,\omega,y,y',z,z')\in [0,T]\times\Omega\times\mathbbm{R}\times\mathbbm{R}\times\mathbbm{R}\times\mathbbm{R}$
			\begin{equation}
			|f(t,\omega,y',z')-f(t,\omega,y,z)|\leq K(|y'-y|+|z'-z|).
			\end{equation}
		\end{enumerate}
	\end{enumerate}
\end{hypothesis}
We recall that the notion of polynomial growth has been introduced
in  Definition \ref{DefViens}.

\begin{remark} \label{R424}
A direct consequence of Proposition \ref{supX} and the fact that $\mathbbm{Q}^{s,\eta}$ is a translation of $\mathbbm{P}^{s,\eta}$, is that 
since 
$\xi,f$
 verify Hypothesis \ref{C7HypBSDE}, then $\xi$ belongs to $\mathcal{L}^2(\mathbbm{Q}^{s,\eta})$ for all $(s,\eta)$ and $f(\cdot,\cdot,0,0)\in\mathcal{L}^2_{uni}$.
We remark that $\mathbbm{Q}^{s,\eta}$ is just a translation of
 $\mathbbm{P}^{s,\eta}$, under which $\|\omega\|_{\infty}$ admits finite moments of every order, see Proposition \ref{supX}.    

\end{remark}

We now consider the following abstract path-dependent non linear equation.

\begin{equation}\label{C7AbstractEq}
\left\{
\begin{array}{l}
A\Phi+f(\cdot,\cdot,\Phi,\Gamma(m^T,\Phi))=0\quad \text{ on }[0,T]\times\Omega\\
\Phi_T=\xi\quad\text{ on }\Omega.
\end{array}\right.
\end{equation}

\begin{remark}\label{RemPDE}
In previous equation \eqref{C7AbstractEq}, if
$\tilde \Phi \in {\mathcal D}(\tilde A)$
 then the equation can also be written
\begin{equation}
\left\{
\begin{array}{l}
\left(D\tilde{\Phi}_{t}+\nabla_{b(\cdot,t)}\tilde{\Phi}_{t} +\frac{1}{2}\underset{i\leq d}{\sum}\nabla_{k_i(\cdot,t)}^2\tilde{\Phi}_{t} + f\left(t,\cdot,\tilde{\Phi}_{t},k(T,t)\nabla_{k(\cdot,t)}\tilde{\Phi}_{t}\right)\right)(m_t[\omega])=0,  \\ 
\quad \quad \quad \text{ on }[0,T]\times\Omega\\
\tilde{\Phi}_{T}(\omega)=\xi(\omega)\quad\text{ on }\Omega,
\end{array}\right.
\end{equation}
see Definition \ref{DefDA} and Proposition \ref{mTprop},
observing that $m_T(\omega) = \omega$, hence \\
$\tilde \Phi_T = \Phi_T$.
\end{remark}
\begin{notation} \label{N425}
Equation \eqref{C7AbstractEq} will be denoted $PDPDE(f,\xi)$.
\end{notation}

\begin{definition}\label{DefSol}
A process $Y$ will be called a \textbf{classical solution of }$PDPDE(f,\xi)$ if it belongs to $\mathcal{D}(A)$
and if $Y$ verifies \eqref{C7AbstractEq}.
A process $Y\in\mathcal{L}^2_{uni}$ will be called  \textbf{decoupled mild solution of} $PDPDE(f,\xi)$ if there exist auxiliary processes $Z^1,\cdots,Z^d\in\mathcal{L}^2_{uni}$ such that for all $(s,\eta)\in[0,T]\times\Omega$:
		\begin{equation}\label{C7AbMildEq}
		\left\{
		\begin{array}{rl}
		Y_s(\eta)&=P_s[\xi](\eta)+\int_s^TP_s\left[f\left(r,\cdot,Y_r,Z_r\right)\right](\eta)dr\\
		(Ym^T)_s(\eta) &=P_s[\xi m^T_T](\eta) -\int_s^TP_s\left[\left(Z_r-m^T_rf\left(r,\cdot,Y_r,Z_r\right)\right)\right](\eta)dr,
		\end{array}\right.
		\end{equation}
where $Z:=(Z^1,\cdots,Z^d)$.

The couple $(Y,Z)$ will be said to \textbf{solve the identification problem} $IP(f,\xi)$. 
\end{definition}
Decoupled mild solutions were introduced in path-dependent framework in \cite{paperPathDep} and in the  framework of classical parabolic PDEs in \cite{paper3}.

To $PDPDE(f,\xi)$ we associate the following family of BSDEs  indexed by $(s,\eta)$ and defined on the time interval $[s,T]$:
\begin{equation} \label{C7BSDE}
Y^{s,\eta}_{\cdot}=\xi+\int_{\cdot}^{T}f\left(r,\cdot,Y^{s,\eta}_r,\frac{d\langle M^{s,\eta},m^{T,s,\eta}\rangle_r}{dr}\right)dr-(M^{s,\eta}_T-M^{s,\eta}_{\cdot}),
\end{equation} 
in the stochastic basis $\left(\Omega,\mathcal{F}^{s,\eta},\mathbbm{F}^{s,\eta},\mathbbm{Q}^{s,\eta}\right)$, where $m^{T,s,\eta}$, which was introduced in Notation \ref{mT}, is the driving martingale. \\  
\begin{remark} \label{R427}
Taking into account Hypothesis \ref{C7HypBSDE}, Remark \ref{R424}
 and the fact that  $\frac{d\langle m^{T,s,\eta}\rangle_t}{dt}$ is bounded $dt\otimes d\mathbbm{Q}^{s,\eta}$ a.e. (see Proposition \ref{mTprop}), 
 Theorem 3.3 and Remark 3.4 in \cite{paper3} applied with $\hat{M}:=m^{T,s,\eta};\quad V_t\equiv t$ imply that for every $(s,\eta)$, there exists a unique couple $(Y^{s,\eta},M^{s,\eta})\in\mathcal{L}^2(dt\otimes d\mathbbm{Q}^{s,\eta})\times \mathcal{H}^2_0(\mathbbm{Q}^{s,\eta})$ verifying \eqref{C7BSDE} on $[s,T]$.
\end{remark}

We state now the main results of this paper.
\begin{theorem}\label{MainTheorem}
Assume the validity of  Hypotheses \ref{HypGaussProba} for $\mathbbm{P}$, 
\ref{Hypbk} for $b,k$ and \ref{C7HypBSDE} for $\xi,f$.
\begin{enumerate}
\item $PDPDE(f,\xi)$ has a unique decoupled mild solution;
\item $IP(f,\xi)$ admits a unique solution
 $(Y,Z)\in\mathcal{L}^2_{uni}\times (L^2_{uni})^d$.
By uniqueness we mean more precisely the following: 
if $(Y,Z)$ and $(\bar Y, \bar Z)$ are two solutions then
$Y $ and $\bar Y$ are identical and $Z = \bar Z$ q.s.
\item For every $(s,\eta)$, let 
  $(Y^{s,\eta},M^{s,\eta})$ be the solution  of \eqref{C7BSDE}.
  Then, for every $(s,\eta)$, we have that, $Y^{s,\eta}$ is on $[s,T]$ a $\mathbbm{Q}^{s,\eta}$ modification of $Y$, and $Z_t=\frac{d\langle M^{s,\eta},m^{T,s,\eta}\rangle_t }{dt}$ $dt\otimes d\mathbbm{Q}^{s,\eta}$ a.e. In particular, $Y_s(\eta)= Y^{s,\eta}_s$.
\end{enumerate}
\end{theorem}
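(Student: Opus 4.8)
The plan is to recognise the present setting as a special case of the abstract theory of path-dependent decoupled mild solutions of \cite{paperPathDep}, since the whole of Sections \ref{S1}--\ref{S43} is precisely devoted to verifying its hypotheses, and then to run (or invoke) the corresponding well-posedness mechanism. Indeed: $(\mathbbm{Q}^{s,\eta})_{(s,\eta)}$ is a progressive path-dependent canonical class satisfying Hypothesis \ref{HypClass} (Remark \ref{ResumeQ}, Proposition \ref{Prog}); $(\mathcal{D}(A),A)$ is a weak generator of $(P_s)$ and $(\mathbbm{Q}^{s,\eta})$ solves the associated (well-posed) martingale problem (Proposition \ref{PropWeakGen}); the prediction process $m^T$ belongs to $\mathcal{D}(A)$, is $A$-harmonic ($A(m^{T,i})\equiv 0$) with bounded carré du champ, so that $m^{T,s,\eta}=m^T-m^T_s(\eta)$ is, on $[s,T]$, a $\mathbbm{Q}^{s,\eta}$-square integrable martingale with bounded quadratic variation (Proposition \ref{mTprop}); $\sup_r\|X_r\|$ has moments of all orders under every $\mathbbm{Q}^{s,\eta}$, with polynomial dependence on $\|\eta\|_\infty$ (Proposition \ref{supX}); and $\xi,f$ satisfy Hypothesis \ref{C7HypBSDE}, with $\xi\in\mathcal{L}^2(\mathbbm{Q}^{s,\eta})$ and $f(\cdot,\cdot,0,0)\in\mathcal{L}^2_{uni}$ (Remark \ref{R424}). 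Below I sketch the mechanism yielding the three claims.

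First I would, for each $(s,\eta)$, take the unique $(Y^{s,\eta},M^{s,\eta})\in\mathcal{L}^2(dt\otimes d\mathbbm{Q}^{s,\eta})\times\mathcal{H}^2_0(\mathbbm{Q}^{s,\eta})$ solving \eqref{C7BSDE} (Remark \ref{R427}), and set $Y_s(\eta):=Y^{s,\eta}_s$; this is unambiguous since $Y^{s,\eta}_s$ is $\mathcal{F}_s$-measurable hence $\mathbbm{Q}^{s,\eta}$-a.s. constant (Proposition \ref{CoroTrivial}), and $\eta\mapsto Y_s(\eta)$ is $\mathcal{F}^o_s$-measurable with polynomial growth by the a priori BSDE estimates and the moment bounds above. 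Since $\langle m^{T,s,\eta}\rangle\ll dt$ with bounded density, also $\langle M^{s,\eta},m^{T,s,\eta}\rangle\ll dt$, so each $r\mapsto \frac{d\langle M^{s,\eta},m^{T,s,\eta}\rangle_r}{dr}$ lies in $\mathcal{L}^2(dt\otimes d\mathbbm{Q}^{s,\eta})^d$. The crux of the whole argument, and the step I expect to be the main obstacle, is then to paste this family -- each object defined only $\mathbbm{Q}^{s,\eta}$-a.s. -- into a single pair $(Y,Z)$ of $\mathbbm{F}^o$-progressive processes representing all of them at once. This rests on the flow property of BSDEs along the canonical class: for $s\le t$, $(\mathbbm{Q}^{t,\omega})_\omega$ is a regular conditional probability of $\mathbbm{Q}^{s,\eta}$ given $\mathcal{F}_t$ (Hypothesis \ref{HypClass}), so by uniqueness of \eqref{C7BSDE} and a stability-under-conditioning argument (as in \cite{paperPathDep}) the restriction of $(Y^{s,\eta},M^{s,\eta})$ to $[t,T]$ coincides $\mathbbm{Q}^{s,\eta}$-a.s. with $(Y^{t,\omega},M^{t,\omega})$ for $\mathbbm{Q}^{s,\eta}$-a.e.\ $\omega$; in particular $Y^{s,\eta}_t=Y^{t,\omega}_t=Y_t(\omega)$ for a.e.\ $\omega$. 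Using progressivity (Proposition \ref{Prog}), this yields an $\mathbbm{F}^o$-progressive $Y\in\mathcal{L}^2_{uni}$ of which each $Y^{s,\eta}$ is a $\mathbbm{Q}^{s,\eta}$-modification on $[s,T]$, and makes the local densities $\frac{d\langle M^{s,\eta},m^{T,s,\eta}\rangle}{d\cdot}$ mutually consistent up to $dt\otimes d\mathbbm{Q}^{s,\eta}$-null sets, hence gluable into a single $Z\in(L^2_{uni})^d$ with $Z_r=\frac{d\langle M^{s,\eta},m^{T,s,\eta}\rangle_r}{dr}$ $dt\otimes d\mathbbm{Q}^{s,\eta}$-a.e.\ for every $(s,\eta)$. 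This already yields item 3, in particular $Y_s(\eta)=Y^{s,\eta}_s$.

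Next I would verify that $(Y,Z)$ solves $IP(f,\xi)$. Taking $\mathbbm{E}^{s,\eta}=P_s[\cdot](\eta)$ in \eqref{C7BSDE} at time $s$, the centered martingale increment drops out and, after replacing $Y^{s,\eta}_r$ by $Y_r$ and $\frac{d\langle M^{s,\eta},m^{T,s,\eta}\rangle_r}{dr}$ by $Z_r$ ($dt\otimes d\mathbbm{Q}^{s,\eta}$-a.e.) and applying Fubini, one obtains exactly the first line of \eqref{C7AbMildEq}. For the second line, apply integration by parts to $Y^{s,\eta}_\cdot\,m^{T,s,\eta}_\cdot$ on $[s,T]$; using $m^{T,s,\eta}_s=0$, $\langle Y^{s,\eta},m^{T,s,\eta}\rangle=\langle M^{s,\eta},m^{T,s,\eta}\rangle$, the BSDE dynamics of $Y^{s,\eta}$ and the substitution $m^{T,s,\eta}_r=m^T_r-m^T_s(\eta)$, then taking $\mathbbm{E}^{s,\eta}$ (the stochastic integrals being true martingales thanks to the moment bounds), one gets an identity in which the terms proportional to $m^T_s(\eta)$ are absorbed via the already-established first line; what remains is precisely $(Ym^T)_s(\eta)=Y_s(\eta)m^T_s(\eta)$ equal to the right-hand side of the second line of \eqref{C7AbMildEq}. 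Hence $Y$ is a decoupled mild solution and $(Y,Z)$ solves $IP(f,\xi)$.

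Finally, for uniqueness, let $(\bar Y,\bar Z)\in\mathcal{L}^2_{uni}\times(L^2_{uni})^d$ solve $IP(f,\xi)$. Evaluating the first equation of \eqref{C7AbMildEq} at an arbitrary $(t,\omega)$ with $t\ge s$ and using the canonical-class conditioning shows that $N^{s,\eta}_\cdot:=\bar Y_\cdot+\int_s^\cdot f(r,\cdot,\bar Y_r,\bar Z_r)\,dr$ is a $(\mathbbm{Q}^{s,\eta},\mathbbm{F})$-martingale, so $\bar Y$ together with $\bar M^{s,\eta}:=N^{s,\eta}-N^{s,\eta}_s\in\mathcal{H}^2_0(\mathbbm{Q}^{s,\eta})$ satisfies the dynamics of \eqref{C7BSDE}, while the second equation of \eqref{C7AbMildEq}, again via integration by parts and conditioning, forces $\bar Z_r=\frac{d\langle\bar M^{s,\eta},m^{T,s,\eta}\rangle_r}{dr}$ $dt\otimes d\mathbbm{Q}^{s,\eta}$-a.e. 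Thus $(\bar Y,\bar M^{s,\eta})$ is the unique solution of \eqref{C7BSDE} provided by Remark \ref{R427}, whence $\bar Y$ is a $\mathbbm{Q}^{s,\eta}$-modification of $Y$ and $\bar Z=Z$ $dt\otimes d\mathbbm{Q}^{s,\eta}$-a.e., for every $(s,\eta)$; that is, $\bar Y=Y$ and $\bar Z=Z$ q.s. This establishes items 1 and 2, the delicate point throughout being the consistency/pasting step of the second paragraph, which ties the BSDE's $Z$ to the operator $\Gamma(m^T,\cdot)$ appearing in $PDPDE(f,\xi)$ through the $A$-harmonicity and bounded square field of $m^T$.
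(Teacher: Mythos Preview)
Your proposal is correct and follows exactly the paper's strategy: verify that the structures built in Sections \ref{S1}--\ref{S43} (progressive path-dependent canonical class with Hypothesis \ref{HypClass}, weak generator $(\mathcal{D}(A),A)$, the process $\Psi=m^T$ with its properties from Proposition \ref{mTprop}, and the integrability of $\xi,f$) meet the standing hypotheses of Theorem 3.19 in \cite{paperPathDep}, then invoke that theorem. The paper's own proof does precisely this in a few lines; you additionally unfold the mechanism behind the cited theorem (BSDE flow/consistency under conditioning, pasting into a single $(Y,Z)$, integration-by-parts derivation of \eqref{C7AbMildEq}, and uniqueness via reconstruction of the BSDE), which is faithful to what that abstract result contains. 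One small inaccuracy: Proposition \ref{PropWeakGen} only asserts that $(\mathbbm{Q}^{s,\eta})$ \emph{solves} the martingale problem and that $(\mathcal{D}(A),A)$ is a weak generator; well-posedness of the martingale problem is neither claimed nor needed here.
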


\begin{proof}
We make use of Theorem 3.19 in \cite{paperPathDep} applied with the operator $(\mathcal{D}(A),A)$ introduced in Definition \ref{DefDA}  and with $\Psi:=m^T$.

$(\mathbbm{Q}^{s,\eta})_{(s,\eta)\in[0,T]\times\Omega}$ is a progressive path-dependent canonical class verifying Hypothesis \ref{HypClass} (see Propositions \ref{ClassGauss}, \ref{Prog} and Corollary \ref{CoroHyp}) and $(\mathcal{D}(A),A)$ is weak generator of $(P_s)_{s\in[0,T]}$ (see Proposition \ref{PropWeakGen}) as required 
in Section 3.6 of \cite{paperPathDep}.
$\Psi:=m^T$ verifies 
Hypothesis 3.16
 in \cite{paperPathDep} thanks to Proposition \ref{mTprop}, and $\xi,f$ verify
Hypothesis 3.6 in \cite{paperPathDep} thanks to Hypothesis \ref{C7HypBSDE} and Remark \ref{R424}. So 
Theorem 3.19 in \cite{paperPathDep} applies.
\end{proof}

The link between decoupled mild solutions and classical solutions is the following.

\begin{proposition}\label{C7classical}
	\begin{enumerate}\
		\item Let $\Phi$ be a classical solution of $PDPDE(f,\xi)$, 
see Definition \ref{DefSol}. Then $(\Phi,\Gamma(m^T,\Phi))$ is a solution of the identification problem $IP(f,\xi)$ (see Definition \ref{DefSol}) and in particular, $\Phi$ is a decoupled mild solution of $PDPDE(f,\xi)$;
		\item there is at most one classical solution of $PDPDE(f,\xi)$;
		\item assume that the unique decoupled mild solution $Y$ of $PDPDE(f,\xi)$ verifies $Y\in\mathcal{D}(A)$;
		then $Y$ is a classical solution q.s., in the sense that $Y_T=\xi$ (for all $\omega$) and that $A(Y)=-f(\cdot,\cdot,Y,\Gamma(m^T,Y))$ q.s., see
 Definition \ref{C7zeropotential}. 
	\end{enumerate}
\end{proposition}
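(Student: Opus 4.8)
The plan is to prove the three items separately, exploiting the correspondence between the analytic objects and the probabilistic ones already established in the excerpt.

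\medskip

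\textbf{Item 1.} Suppose $\Phi = \tilde\Phi\circ m \in \mathcal{D}(A)$ is a classical solution, so $A\Phi = -f(\cdot,\cdot,\Phi,\Gamma(m^T,\Phi))$ and $\Phi_T = \xi$. First I would note that $\Phi$ has polynomial growth (by definition of $\mathcal{C}^{1,2}_+(\bar\Lambda)$ and the definition of $\mathcal{D}(\tilde A)$), hence $\Phi \in \mathcal{L}^2_{uni}$ by Proposition \ref{supX} and the translation remark. Likewise, by Proposition \ref{ExprGamma}, $\Gamma(m^T,\Phi)_t(\omega) = \sum_{j\le d} k_{i,j}(T,t)\nabla_{k_j(\cdot,t)}\tilde\Phi_t(m_t[\omega])$ (coordinatewise), which has polynomial growth by the domain conditions on $\mathcal{D}(\tilde A)$ and boundedness of $k$, so each component of $Z:=\Gamma(m^T,\Phi)$ lies in $\mathcal{L}^2_{uni}$. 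Then I would invoke that $(\mathcal{D}(A),A)$ is a weak generator of $(P_s)$ (Proposition \ref{PropWeakGen}): applying the weak-generator identity to $\Phi$ between $s$ and $T$ gives $P_s[\Phi_T](\eta) = \Phi_s(\eta) + \int_s^T P_s[A\Phi_r](\eta)\,dr$, i.e. $\Phi_s(\eta) = P_s[\xi](\eta) + \int_s^T P_s[f(r,\cdot,\Phi_r,Z_r)](\eta)\,dr$, which is the first line of \eqref{C7AbMildEq}. For the second line, apply the weak-generator identity to the product $\Phi m^{T,i} \in \mathcal{D}(A)$ (recall $\mathcal{D}(A)$ is an algebra, Remark \ref{R415}, and $m^{T,i}\in\mathcal{D}(A)$ by Proposition \ref{mTprop}); using $A(\Phi m^{T,i}) = \Phi A(m^{T,i}) + m^{T,i} A(\Phi) + \Gamma(m^{T,i},\Phi) = m^{T,i}A(\Phi) + \Gamma(m^{T,i},\Phi)$ since $A(m^{T,i})\equiv 0$, and $(\Phi m^{T,i})_T = \xi m^T_T$ (as $m_T=\mathrm{id}$), one gets exactly the $i$-th component of the second line of \eqref{C7AbMildEq}. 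Hence $(\Phi,\Gamma(m^T,\Phi))$ solves $IP(f,\xi)$ and $\Phi$ is a decoupled mild solution.

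\medskip

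\textbf{Item 2.} Uniqueness of a classical solution is then immediate: a classical solution is a decoupled mild solution by item 1., and Theorem \ref{MainTheorem} asserts the decoupled mild solution is unique; since two classical solutions would be indistinguishable as decoupled mild solutions and both lie in $\mathcal{D}(A)$ (whose elements are genuine $\mathbbm{F}^o$-progressive processes, in fact continuous in the relevant sense), they must coincide. I would spell out that ``unique'' here should be read modulo the $\mathcal{L}^2_{uni}$ identification, and that two elements of $\mathcal{D}(A)$ which agree $dt\otimes d\mathbbm{Q}^{s,\eta}$-a.e. for all $(s,\eta)$ agree everywhere, because $\mathbbm{Q}^{s,s,\eta}$-supports fill up $[0,T]\times\Omega$ (Proposition \ref{FullSupp}) and the functionals are continuous.

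\medskip

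\textbf{Item 3.} Now assume the unique decoupled mild solution $Y$ belongs to $\mathcal{D}(A)$. First, $Y_T = \xi$ everywhere: evaluating the first line of \eqref{C7AbMildEq} at $s=T$ gives $Y_T(\eta) = P_T[\xi](\eta) = \xi(\eta)$ since $P_T$ is the identity on $\mathcal{F}^o_T$-measurable functions and $\xi$ is $\mathcal{F}$-measurable (one should be a touch careful: $P_T[\xi](\eta)$ equals $\mathbbm{E}^{T,\eta}[\xi] = \xi(\eta^T) = \xi(\eta)$ because $\mathbbm{Q}^{T,\eta}$ is the Dirac mass at $\eta$ by Proposition \ref{FullSupp} with $s=T$). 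Next, since $Y\in\mathcal{D}(A)$, write $Y=\tilde Y\circ m$ with $\tilde Y\in\mathcal{D}(\tilde A)$; then $Z:=\Gamma(m^T,Y)$ is well-defined, lies in $(\mathcal{L}^2_{uni})^d$ by the polynomial-growth argument of item 1., and by item 1. the pair $(Y,\Gamma(m^T,Y))$ solves $IP(f,\xi)$. But $(Y,Z)$ with the original auxiliary processes also solves $IP(f,\xi)$; by the uniqueness statement in Theorem \ref{MainTheorem}(2), $Z = \Gamma(m^T,Y)$ q.s. Finally, to get $A(Y) = -f(\cdot,\cdot,Y,\Gamma(m^T,Y))$ q.s., I would differentiate the decoupled mild equation: since $Y\in\mathcal{D}(A)$, the weak-generator property gives $P_s[Y_t](\eta) = Y_s(\eta) + \int_s^t P_s[A(Y)_r](\eta)\,dr$; combining this with the first line of \eqref{C7AbMildEq} (which also holds with $T$ replaced by any $t$ by the semigroup property $P_s\circ P_t = P_s$ and a standard manipulation) yields $\int_s^t P_s[A(Y)_r + f(r,\cdot,Y_r,Z_r)](\eta)\,dr = 0$ for all $t\ge s$; the integrand being $dr$-integrable (Hypothesis \ref{HypDA}) and right-continuous enough, this forces $P_s[(A(Y) + f(\cdot,\cdot,Y,\Gamma(m^T,Y)))_r](\eta) = 0$ for a.e.\ $r\ge s$, for all $(s,\eta)$; taking $s=r$ and using $P_r[\,\cdot\,](\eta) = \mathbbm{E}^{r,\eta}[\,\cdot\,]$ together with the fact that $\mathbbm{E}^{r,\eta}$ of an $\mathcal{F}^o_r$-measurable functional evaluated at $\eta$ recovers its value, gives the q.s.\ identity. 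I expect the main obstacle to be this last ``de-integration'' step: one must justify passing from the vanishing of the time-integral of the projected quantity to the q.s.\ vanishing of the quantity itself, which requires checking that the null set obtained lies in $\mathcal{P}ro^o$ and has zero $dt\otimes d\mathbbm{Q}^{s,\eta}$-potential for every $(s,\eta)$ simultaneously, i.e.\ carefully organizing the a.e.-in-$r$ statements into a single set in the sense of Definition \ref{C7zeropotential}. This is essentially the path-dependent analogue of the corresponding step in \cite{paperPathDep}, so I would follow that argument closely.
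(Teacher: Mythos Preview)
Your proofs of items 1 and 2 are correct and essentially match the paper's (the paper phrases item 1 via the martingales $M[\Phi]^{s,\eta}$ and $M[\Phi m^T]^{s,\eta}$ rather than the weak-generator identity, but these are equivalent by Proposition~\ref{MPopWellPosed}).

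Your argument for item 3, however, contains a genuine circularity. You write that ``by item 1.\ the pair $(Y,\Gamma(m^T,Y))$ solves $IP(f,\xi)$'', but item 1 takes as hypothesis that $\Phi$ is a \emph{classical} solution, i.e.\ that $A\Phi=-f(\cdot,\cdot,\Phi,\Gamma(m^T,\Phi))$ already holds---which is precisely what you are trying to establish for $Y$. So you cannot conclude $Z=\Gamma(m^T,Y)$ q.s.\ this way. Your de-integration step for $A(Y)=-f(\cdot,\cdot,Y,Z)$ is also incomplete, as you acknowledge: from $\int_s^t P_s[\Psi_r](\eta)\,dr=0$ for all $s\le t$ and all $\eta$ one only obtains $\mathbbm{E}^{s,\eta}[\Psi_r]=0$ for a.e.\ $r\ge s$, and ``taking $s=r$'' does not recover $\Psi=0$ q.s.\ because the exceptional $r$-set depends on $(s,\eta)$. (This particular gap can be closed by noting that $\int_0^{\cdot}\Psi_r\,dr$ is then, for every $(s,\eta)$, a continuous bounded-variation $(\mathbbm{Q}^{s,\eta},\mathbbm{F}^o)$-martingale on $[s,T]$, hence constant; but you would still need an independent argument for $Z=\Gamma(m^T,Y)$.)

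The paper takes a different, probabilistic route for item 3, through the BSDE representation rather than the weak-generator identities. Since $Y\in\mathcal{D}(A)$, the process $M[Y]^{s,\eta}=Y-Y_s(\eta)-\int_s^{\cdot} A(Y)_r\,dr$ is a continuous $\mathbbm{Q}^{s,\eta}$-martingale (Proposition~\ref{PropWeakGen}), so $Y$ is a continuous special semimartingale under $\mathbbm{Q}^{s,\eta}$. By Theorem~\ref{MainTheorem}(3), $Y$ is on $[s,T]$ a $\mathbbm{Q}^{s,\eta}$-modification of $Y^{s,\eta}$, which from the BSDE has decomposition $Y^{s,\eta}=Y^{s,\eta}_s-\int_s^{\cdot} f(r,\cdot,Y_r,Z_r)\,dr+M^{s,\eta}$. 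Continuity upgrades modification to indistinguishability, and uniqueness of the special-semimartingale decomposition then yields simultaneously $\int_s^{\cdot} A(Y)_r\,dr=-\int_s^{\cdot} f(r,\cdot,Y_r,Z_r)\,dr$ (hence $A(Y)=-f(\cdot,\cdot,Y,Z)$ q.s.) and $M[Y]^{s,\eta}=M^{s,\eta}$. The identification $Z=\Gamma(m^T,Y)$ q.s.\ follows from the angular bracket: by Theorem~\ref{MainTheorem}(3), $\int_s^{\cdot} Z_r\,dr=\langle M^{s,\eta},m^{T,s,\eta}\rangle=\langle M[Y]^{s,\eta},m^{T,s,\eta}\rangle$, and the latter equals $\int_s^{\cdot}\Gamma(m^T,Y)_r\,dr$ by Corollary~\ref{CoroBracketGamma} and Proposition~\ref{ExprGamma}. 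This semimartingale-decomposition argument bypasses both the circularity and the de-integration subtlety in one stroke.
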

\begin{proof}\
\begin{enumerate}
\item	
 Let $\Phi:(t,\omega)\longmapsto\tilde{\Phi}_{t}(m_t[\omega])$ be a classical solution. First, 
since $\Phi$   belongs to $\mathcal{D}(A)$ then $\Phi$ and 
$$\Gamma(m^{T,i},\Phi):(t,\omega)\longmapsto \underset{j\leq d}{\sum}k_{i,j}(T,t)\nabla_{k_j(\cdot,t)}\tilde{\Phi}_{t}(m_t[\omega]),i\leq d,$$   have polynomial growth, see Definition 
\ref{DefDA}. Hence thanks to Proposition \ref{supX} and since $\mathbbm{Q}^{s,\eta}$ is a translation of $\mathbbm{P}^{s,\eta}$ , those processes belong to $\mathcal{L}^2_{uni}$.

On the other hand, let  $(s,\eta)\in[0,T]\times\Omega$.
By Proposition \ref{PropWeakGen},
\begin{eqnarray}\label{EMMart}
M[\Phi]^{s,\eta}  &=& \Phi-\Phi_s(\eta)-\int_s^{\cdot}A\Phi_rdr, \\
M[\Phi m^T]^{s,\eta}&=& \Phi m^T-\Phi_s(\eta)m^T_s(\eta)-\int_s^{\cdot}A(\Phi m^T)_rdr,
\end{eqnarray} 
 are $\mathbbm{Q}^{s,\eta}$-martingales on $[s,T]$ 
vanishing at time $s$. 
By Definition \ref{DefSol} we have $A\Phi=-f(\cdot,\cdot,\Phi,\Gamma(m^T,\Phi))$ and by  Propositions \ref{ExprGamma}, \ref{mTprop}, taking into account
 Notation \ref{C7NotGamma},
we have 
$$A(\Phi m^T)=\Gamma(\Phi, m^T)+\Phi Am^T+m^TA\Phi=\Gamma(\Phi, m^T)-m^Tf(\cdot,\cdot,\Phi,\Gamma(m^T,\Phi)),$$
 so the martingales \eqref{EMMart},
 indexed by $[s,T]$, 
can be rewritten as
\small
\begin{equation}\label{C7PreviousMart}
\left\{\begin{array}{rcl}
M[\Phi]^{s,\eta} &=&  \Phi-\Phi_s(\eta)+\int_s^{\cdot}f(r,\cdot,\Phi_r,\Gamma(m^T,\Phi)_r)dr\\
M[\Phi m^T]^{s,\eta} &=& 
  \Phi m^T-\Phi_s(\eta)m^T_s(\eta)-\int_s^{\cdot}(\Gamma(m^T,\Phi)_r-m^T_rf(r,\cdot,\Phi,\Gamma(m^T,\Phi)_r)dr.
\end{array}\right.
\end{equation}
 \normalsize
 Finally, again by Definition \ref{DefSol} we have  $\Phi_T=\xi$, so, for any $(s,\eta)$,
 taking the expectations in \eqref{C7PreviousMart} at $s = T$, we get  
	\small
	\begin{equation}
	\left\{
	\begin{array}{l}
	\mathbbm{E}^{s,\eta}\left[\xi-\Phi_s(\eta)+\int_s^{T}f(r,\cdot,\Phi_r,\Gamma(m^T,\Phi)_r)dr\right]=0;\\
	\mathbbm{E}^{s,\eta}\left[\xi m^T_T-(\Phi m^T)_s(\eta)-\int_s^{T}(\Gamma(m^T,\Phi)_r-m^T_rf(r,\cdot,\Phi,\Gamma(m^T,\Phi)_r))dr\right]=0,
	\end{array}
	\right.
	\end{equation}
	\normalsize
	which by Fubini's theorem and Definition \ref{ProbaOp} 
 yields
	\small
	\begin{equation}
	\left\{
	\begin{array}{rl}
	\Phi_s(\eta)=&P_s[\xi](\eta)+\int_s^TP_s\left[f\left(r,\cdot,\Phi_r,\Gamma(m^T,\Phi)_r\right)\right](\eta)dr\\
	(\Phi m^T)_s(\eta) =&P_s[\xi m^T_T](\eta) -\int_s^TP_s\left[\Gamma(m^T,\Phi)_r-m^T_rf\left(r,\cdot,\Phi_r,\Gamma(m^T,\Phi)_r\right)\right](\eta)dr
	\end{array}\right.
	\end{equation}
	\normalsize
	and the first item is proved.
\item
	The second item follows from item 1. and by the uniqueness of a decoupled mild solution of $PDPDE(f,\xi)$, see Theorem \ref{MainTheorem} 1. 
	\item
	Concerning item 3. let $(Y,Z)$ be the unique  solution of $IP(f,\xi)$. We first note again that the first line of \eqref{C7AbMildEq} taken with $s=T$ yields $Y_T=\xi$. 
	
        Let us now fix some $(s,\eta)\in[0,T]\times\Omega$. The fact that $Y\in\mathcal{D}(A)$, Proposition \ref{PropWeakGen}  
        implies  that
 $M^{s,\eta}[Y]=Y-Y_s-\int_s^{\cdot}AY_rdr$ is on $[s,T]$ a continuous $\mathbbm{Q}^{s,\eta}$-martingale. Hence  $Y$ is under $\mathbbm{Q}^{s,\eta}$ a continuous semi-martingale.
Let us keep in mind the unique solution  $(Y^{s,\eta}, M^{s,\eta})  $ 	
of \eqref{C7BSDE}.
A consequence of item 3. of  
Theorem \ref{MainTheorem}
is that $Y$ admits on $[s,T]$, $Y^{s,\eta}$ as $\mathbbm{Q}^{s,\eta}$ cadlag version  
 which is a special semi-martingale verifying  
$ Y^{s,\eta}_t = Y_s^{s,\eta} - \int_s^tf(r,Y_r,Z_r)dr + M^{s,\eta}_t, t \in [s,T]$.
 Since $Y$ is $\mathbbm{Q}^{s,\eta}$ a.s. continuous, then $Y$ and $Y^{s,\eta}$ are actually $\mathbbm{Q}^{s,\eta}$-indistinguishable on $[s,T]$; so the uniqueness of the decomposition of 
the semi-martingale $Y$, yields that $(\int^{\cdot}_sAY_rdr,M^{s,\eta}[Y])$ and $(-\int^{\cdot}_sf(r,Y_r,Z_r)dr,M^{s,\eta})$ are
  $\mathbbm{Q}^{s,\eta}$-indistinguishable on $[s,T]$. Since this holds for all
 $(s,\eta)$, by Definition \ref{C7zeropotential} we have
 $AY=-f(\cdot,\cdot,Y,Z)$ q.s. so we are left to show that $Z=\Gamma(m^T,Y)$ q.s.

For this we fix again $(s,\eta)$. By item 3. of Theorem \ref{MainTheorem},
 $\langle M^{s,\eta},m^{T,s,\eta}\rangle = \int_s^{\cdot}Z_rdr$. 
 By Corollary \ref{CoroBracketGamma} and Propositions \ref{ExprGamma}, \ref{mTprop}, we have  
$$\langle M^{s,\eta}[Y],m^{T,s,\eta}\rangle = \int_s^{\cdot}\Gamma(m^T,\Phi)_rdr.$$ As we  have remarked above,
 $M^{s,\eta}=M^{s,\eta}[Y]$ so $\int_s^{\cdot}Z_rdr=\langle M^{s,\eta},m^{T,s,\eta}\rangle=\langle M^{s,\eta}[Y],m^{T,s,\eta}\rangle=\int_s^{\cdot}\Gamma(m^T,\Phi)_rdr$  under $\mathbbm{Q}^{s,\eta}$ on $[s,T]$. Since this holds for all $(s,\eta)$, we indeed have by Definition \ref{C7zeropotential} that $Z=\Gamma(m^T,\Phi)$ q.s., and the proof is complete.
\end{enumerate}

\end{proof}

\begin{appendix}
	
\section{Technical proofs of Section \ref{SGauss}}
	
\begin{prooff}. of Proposition \ref{ClassGauss}.
	
	The fact that item 1. of Definition \ref{DefCondSyst} holds comes from
	item 1. of Proposition \ref{CondExp}. We now show that item 2. of Definition \ref{DefCondSyst} holds. We fix $s\in[0,T]$. By item 2. (b) of Proposition \ref{CondExp}, $\begin{array}{rcl}
	\eta & \longmapsto & \mathbbm{P}^{s,\eta}\\
	\Omega_s & \longrightarrow & \mathcal{P}(\Omega)
	\end{array}$ is continuous hence Borel, and by Proposition 5.3 in \cite{paperMPv2}, $\mathcal{B}(\Omega_s)=\Omega_s\cap\mathcal{F}^o_s$. Since $\pi_s:\Omega\rightarrow\Omega_s$ is trivially $(\mathcal{F}^o_s, \Omega_s\cap\mathcal{F}^o_s)$-measurable, taking Definition \ref{extension} into account, it follows
that $\begin{array}{rcl}
	\eta & \longmapsto & \mathbbm{P}^{s,\eta}\\
	\Omega & \longrightarrow & \mathcal{P}(\Omega)
	\end{array}$ is $\mathcal{F}^o_s$-measurable; so item 2. of Definition \ref{DefCondSyst} holds.

	We are left to show that its item 3. also holds. We fix $0\leq s\leq t\leq T$. We recall that there exists a countable $\pi$-system $\Pi_T$ (resp. $\Pi_t$) generating $\mathcal{F}$
	(resp. $\mathcal{F}^o_t$), see
 Notation 5.3 in \cite{paperMPv2} for instance. We fix $F\in\Pi_T$ and $G\in\Pi_t$ and we get
	\begin{equation}
	\begin{array}{rl}
	&\mathbbm{E}^{s,\eta}[\mathbbm{P}^{t,\zeta}(F)\mathds{1}_G(\zeta)]\\
	=&\mathbbm{E}^{s,\eta}[\mathbbm{E}^{t,\zeta}[\mathds{1}_F(\omega)]\mathds{1}_G(\zeta)]\\
	=&\mathbbm{E}^{s,\eta}[\mathbbm{E}^{t,\zeta}[\mathds{1}_F(\omega)\mathds{1}_G(\zeta)]]\\
	=&\mathbbm{E}^{s,\eta}[\mathbbm{E}^{t,\zeta}[\mathds{1}_F(\omega)\mathds{1}_G(\omega)]]\\
	=&\mathbbm{E}[\mathbbm{E}[\mathds{1}_F\mathds{1}_G|\mathcal{F}^o_t]|\mathcal{F}^o_s](\eta),\quad \mathbbm{P}\text{ a.s.}\\
	=&\mathbbm{E}[\mathds{1}_F\mathds{1}_G|\mathcal{F}^o_s](\eta),\quad \mathbbm{P}\text{ a.s.}\\
	=&\mathbbm{E}^{s,\eta}[\mathds{1}_F\mathds{1}_G],\quad \mathbbm{P}\text{ a.s.},
	\end{array}
	\end{equation}
	where the third equality holds because $G\in\mathcal{F}^o_t$ and $\mathbbm{P}^{t,\zeta}(\omega^t=\zeta^t)=1$ therefore $\mathds{1}_G=\mathds{1}_G(\zeta)$ $\mathbbm{P}^{t,\zeta}$-a.s.; the fourth and sixth equalities  hold by \eqref{regcondexp}. Since $\Pi_t$ and $\Pi_T$ are countable,
there is  a set $\mathcal{N}^c$ of $\mathbb P$-full measure,
such that for
	all $\eta$ in a set $\mathcal{N}^c$ 
	\begin{equation}\label{Eq1}
	\mathbbm{E}^{s,\eta}[\mathbbm{P}^{t,\omega}(F)\mathds{1}_G(\omega)]=\mathbbm{E}^{s,\eta}[\mathds{1}_F\mathds{1}_G],
	\end{equation}
	for all $F\in\Pi_T, G\in\Pi_t$. By a monotone class argument, 
for all $\eta\in\mathcal{N}^c$, \eqref{Eq1} holds for all $F\in \mathcal{F},G\in\mathcal{F}^o_t$. Therefore for every $\eta\in\mathcal{N}^c$, $(\mathbbm{P}^{t,\omega})_{\omega\in\Omega}$ verifies \eqref{DE13}.

	We will now show that $(\mathbbm{P}^{t,\omega})_{\omega\in\Omega}$ verifies \eqref{DE13} for all $\eta\in\Omega$ and not just for $\eta\in\mathcal{N}^c$. 
	Since $\mathcal{N}^c$ is of full measure, then its closure is a closed set of full measure hence is equal to $\Omega$ by Hypothesis \ref{HypGaussProba} item 2., so $\mathcal{N}^c$ is dense in $\Omega$. We fix $\eta\in\Omega$, a sequence $(\eta_n)_n$ of elements of $\mathcal{N}^c$ converging to $\eta$, some $\Phi\in\mathcal{C}_b(\Omega,\mathbbm{R})$ and some $\mathcal{F}^o_t$-measurable $\Psi^t\in\mathcal{C}_b(\Omega,\mathbbm{R})$. For 
	every $n$, since  $\eta_n\in\mathcal{N}^c$,
 then $(\mathbbm{P}^{t,\omega})_{\omega\in\Omega}$ verifies \eqref{DE13} with $(s,\eta)$ replaced by $(s,\eta_n)$, so 
	\begin{equation}\label{Eq2}
	\mathbbm{E}^{s,\eta_n}[\Phi\Psi^t]= \mathbbm{E}^{s,\eta_n}[\mathbbm{E}^{t,\omega}[\Phi]\Psi^t(\omega)].
	\end{equation}
	$\Phi,\Psi^t$ are bounded continuous. By Proposition \ref{CondExp} item 2.(b), $\omega\mapsto\mathbbm{P}^{t,\omega}$ is continuous; 
by definition of the topology on $\mathcal{P}(\Omega)$, $\omega\mapsto\mathbbm{E}^{t,\omega}[\Phi]$ is (bounded) continuous,  therefore
 $\mathbbm{E}^{t,\cdot}[\Phi]\Psi^t$ is bounded continuous; moreover
 $\Phi\Psi^t$ is also bounded and continuous.
	Since $\eta\mapsto\mathbbm{P}^{s,\eta}$ is continuous, we can pass to the limit in $n$ in \eqref{Eq2} and we get 
	\begin{equation}\label{Eq3}
	\mathbbm{E}^{s,\eta}[\Phi\Psi^t]= \mathbbm{E}^{s,\eta}[\mathbbm{E}^{t,\omega}[\Phi]\Psi^t(\omega)].
	\end{equation}
	\eqref{Eq3} holds for all bounded continuous $\Phi$ and bounded continuous $\mathcal{F}^o_t$-measurable $\Psi^t$ so by the functional monotone class theorem (see Theorem 21 in \cite{dellmeyer75} Chapter I), it holds for all bounded measurable $\Phi$ and bounded $\mathcal{F}^o_t$-measurable $\Psi^t$. Since this is true for any $\eta$ we have shown in particular that for all $\eta$, $(\mathbbm{P}^{t,\omega})_{\omega\in\Omega}$ verifies \eqref{DE13}, i.e. item 3. of Definition \ref{DefCondSyst}.
\end{prooff}


\begin{remark} \label{R314}
For given $s \in ]0,T]$,	
	$\Omega_s^*$ can be characterized as the set of bounded positive measures on $[0,s]$ not charging $0$.

	The action of $\pi_s^*:\Omega_s^*\longrightarrow\Omega^*$ is given by    $\pi_s^*(\mu):\omega\longmapsto\mu(\omega|_{[0,s]})$ for all $\mu\in\Omega_s^*$.
\end{remark}
\begin{prooff}. of Proposition \ref{mcad}.
	
	We fix $\eta\in\Omega$ and $s\in[0,T[$ and we show that $m_{\cdot}[\eta]$ is right-continuous in $s$.

	We recall that 
	$\pi_sK\pi_s^{*}\Omega_s^{*}$ is dense in $\Omega_s$ and that on $K\pi_s^{*}\Omega_s^{*}$, $m_s\circ\pi_s$ coincides with the identity, see Proposition \ref{CondExp} item 2.(f).
	Therefore since for all $\delta\geq 0$, we clearly have 
$\pi_s^{*}\Omega_s^{*}\subset \pi_{s+\delta}^{*}\Omega_{s+\delta}^{*}$, so $K\pi_s^{*}\Omega_s^{*}\subset K\pi_{s+\delta}^{*}\Omega_{s+\delta}^{*}$  then for all $\epsilon\geq 0$ there exists $\eta_{\epsilon}\in K\pi_s^{*}\Omega_s^{*}$ such that 
		\begin{eqnarray}\label{Eq4}
	&& \|\pi_s(\eta)-\pi_s(\eta_{\epsilon})\|_{\infty}\leq \frac{\epsilon}{4M_{op}}; \nonumber\\
	&&  m_s\pi_s(\eta_{\epsilon})=\eta_{\epsilon}; \\
	&&  {\rm for \ all} \ \delta \geq 0, m_{s+\delta}\pi_{s+\delta}(\eta_{\epsilon})=\eta_{\epsilon}.
	\nonumber
	\end{eqnarray}
	
	For all $\delta\geq 0$, by Definition \ref{extension} we write
	\begin{equation}
	\begin{array}{rl}
	&m_{s+\delta}(\eta)-m_s(\eta)\\
	=& m_{s+\delta}\pi_{s+\delta}(\eta)-m_s\pi_s(\eta)\\
	=&(m_{s+\delta}\pi_{s+\delta}(\eta)-m_{s+\delta}\pi_{s}(\eta))+(m_{s+\delta}\pi_{s}(\eta)-m_{s+\delta}\pi_{s}(\eta_{\epsilon}))\\
	&+ (m_{s+\delta}\pi_{s}(\eta_{\epsilon})-m_{s+\delta}\pi_{s+\delta}(\eta_{\epsilon}))+(m_{s+\delta}\pi_{s+\delta}(\eta_{\epsilon})-m_s\pi_s(\eta_{\epsilon}))\\
	&+(m_s\pi_s(\eta_{\epsilon})-m_s\pi_s(\eta)),
	\end{array}
	\end{equation}
	where the fourth term of the sum is equal to zero since by \eqref{Eq4}, $m_s\pi_s(\eta_{\epsilon})=\eta_{\epsilon}=m_{s+\delta}\pi_{s+\delta}(\eta_{\epsilon})$. So we obtain
	\begin{equation}\label{Eq5}
	\begin{array}{rl}
	&\|m_{s+\delta}(\eta)-m_s(\eta)\|_{\infty}\\
	\leq &\|m_{s+\delta}\pi_{s+\delta}(\eta)-m_{s+\delta}\pi_{s}(\eta)\|_{\infty}+\|m_{s+\delta}\pi_{s}(\eta)-m_{s+\delta}\pi_{s}(\eta_{\epsilon})\|_{\infty}\\
	&+\|m_{s+\delta}\pi_{s}(\eta_{\epsilon})-m_{s+\delta}\pi_{s+\delta}(\eta_{\epsilon})\|_{\infty}+\|m_s\pi_s(\eta_{\epsilon})-m_s\pi_s(\eta)\|_{\infty}\\
	\leq &M_{op}(\|\pi_{s+\delta}(\eta)-\pi_{s}(\eta)\|_{\infty}+2\|\pi_{s}(\eta)-\pi_{s}(\eta_{\epsilon})\|_{\infty}+\|\pi_{s}(\eta_{\epsilon})-\pi_{s+\delta}(\eta_{\epsilon})\|_{\infty})\\
	\leq &\frac{\epsilon}{2}+M_{op}(\|\pi_{s+\delta}(\eta)-\pi_{s}(\eta)\|_{\infty}+\|\pi_{s}(\eta_{\epsilon})-\pi_{s+\delta}(\eta_{\epsilon})\|_{\infty},
	\end{array}
	\end{equation}
	where the second inequality holds by Proposition \ref{CondExp} 
	item 2. (e), and the third inequality by the first line of \eqref{Eq4}.
	Since clearly, for all $\omega\in\Omega$, $\pi_{s+\delta}(\omega)$ tends 
	uniformly to $\pi_s(\omega)$, then there exists $\delta$ small enough such that $\|\pi_{s+\delta}(\eta)-\pi_{s}(\eta)\|_{\infty}\leq \frac{\epsilon}{4M_{op}}$ and $\|\pi_{s+\delta}(\eta)-\pi_{s}(\eta)\|_{\infty}+\|\pi_{s}(\eta_{\epsilon})-\pi_{s+\delta}(\eta_{\epsilon})\|_{\infty}\leq \frac{\epsilon}{4M_{op}}$ which combined with \eqref{Eq5} gives
	\begin{equation}
	\|m_{s+\delta}(\eta)-m_s(\eta)\|_{\infty}\leq \epsilon;
	\end{equation}
 the right-continuity of $m_{\cdot}(\eta)$ at time $s$ is now proved. 
\end{prooff}

\begin{prooff}. of Proposition \ref{FullSupp}.
	
	We fix some $\eta_0\in\Omega$. It is obvious that $supp(\mathbbm{P}^{s,\eta_0})\subset \eta_0^s +\Omega_s^{\perp}$. We assume that there exists an open
	set of $\eta_0^s + \Omega_s^{\perp}$ which has $\mathbbm{P}^{s,\eta_0}$ zero measure.
	We will find a contradiction, and this will imply that
	$supp(\mathbbm{P}^{s,\eta_0})= \eta_0^s +\Omega_s^{\perp}$.
	$\eta_0^s +\Omega_s^{\perp}$ is a closed subset of $\Omega$.
	Suppose that there is an open subset  (with respect to the induced topology)
	of $\eta_0^s +\Omega_s^{\perp}$ having
	zero $(\mathbbm{P}^{s,\eta_0})$-measure.
	It necessarily   contains a set of type
	$\eta_0^s + B(\zeta, \delta)\cap \Omega_s^{\perp}$ where $B(\zeta, \delta)$ is the open ball (in $\Omega$) of center $\zeta\in\Omega$ and radius $\delta>0$.
	Then we have 
	\begin{equation}\label{EqOpen1}
	\begin{array}{rcl}
	&&\mathbbm{P}^{s,\eta_0}(\eta_0^s + B(\zeta, \delta)\cap \Omega_s^{\perp})=0\\
	&\Longleftrightarrow& \mathbbm{P}^{s,\eta_0}(\omega^s = \eta_0^s \text{ and } \pi_s^{\perp}(\omega)\in B(\zeta, \delta))=0\\
	&\Longleftrightarrow&\mathbbm{P}^{s,\eta_0}( \pi_s^{\perp}(\omega)\in B(\zeta, \delta))=0\\
	&\Longleftrightarrow&\mathbbm{P}^{s,\eta_0}( (\pi_s^{\perp})^{-1}\left(B(\zeta, \delta)\right))=0,
	\end{array}
	\end{equation}
	where we remark that,
        being $\pi_s^{\perp}$ continuous, $(\pi_s^{\perp})^{-1}\left(B(\zeta, \delta)\right)$ is an open set of $\Omega$.	
	By continuity of $m_s$, $m_s^{-1}(B(m_s[\eta_0],\frac{\delta}{2}))$ is also an open set of $\Omega$.
	
	Let $\eta\in m_s^{-1}(B(m_s[\eta_0],\frac{\delta}{2}))$. Then
	\begin{equation}\label{EqOpen2}
	\begin{array}{rcl}
	&&\mathbbm{P}^{s,\eta}( (\pi_s^{\perp})^{-1}\left(B(\zeta, \frac{\delta}{2})\right))\\
	&=& \mathbbm{P}^{s,\eta}( \omega - \omega^s\in B(\zeta, \frac{\delta}{2}))\\
	&=& \mathbbm{P}^{s,\eta}( \omega - \eta^s\in B(\zeta, \frac{\delta}{2}))\\
	&=& \mathbbm{P}^{s,\eta_0}( \omega - \eta^s+m_s[\eta]-m_s[\eta_0]\in B(\zeta, \frac{\delta}{2}))\\
	&=& \mathbbm{P}^{s,\eta_0}( \omega - \eta^s+(\eta^s + \pi_s^{\perp}(m_s[\eta]))-(\eta_0^s + \pi_s^{\perp}(m_s[\eta_0]))\in B(\zeta, \frac{\delta}{2}))\\
	&=& \mathbbm{P}^{s,\eta_0}( \omega - \eta_0^s+\pi_s^{\perp}(m_s[\eta]-m_s[\eta_0]))\in B(\zeta, \frac{\delta}{2}))\\
	&=&\mathbbm{P}^{s,\eta_0}( \omega - \omega^s+\pi_s^{\perp}(m_s[\eta]-m_s[\eta_0]))\in B(\zeta, \frac{\delta}{2}))\\
	&=&\mathbbm{P}^{s,\eta_0}( \pi_s^{\perp}(\omega) \in  B(\zeta, \frac{\delta}{2})- (m_s[\eta]-m_s[\eta_0]))\\
	&\leq &\mathbbm{P}^{s,\eta_0}( \pi_s^{\perp}(\omega) \in  B(\zeta, \delta))\\
	&=& 0,
	\end{array}
	\end{equation}
	where the third equality holds by Remark \ref{RPs}. The fourth is due to the fact that any $\omega$ can be decomposed in $\omega=\pi_s(\omega)+\pi^{\perp}_s(\omega)$ (see Notation \ref{Npis}) and that for all $\omega$, $m_s[\omega]$ and $\omega$ coincide on $[0,s]$ hence $\pi_s[m_s[\omega]]=\pi_s[\omega]=\omega^s$.
	The inequality holds because $B(\zeta, \frac{\delta}{2})- (m_s[\eta]-m_s[\eta_0]))\subset B(\zeta, \delta)$ since $\|m_s[\eta] - m_s[\eta_0]\|<\frac{\delta}{2}$, and the last equality  by \eqref{EqOpen1}.
	
	We can now consider the  set $m_s^{-1}(B(m_s[\eta_0],\frac{\delta}{2}))\cap (\pi_s^{\perp})^{-1}\left(B(\zeta, \delta)\right)$ which is open as intersection of open sets, and we compute 
	\begin{equation}
	\begin{array}{rcl}
	&&\mathbbm{P}\left(m_s^{-1}(B(m_s[\eta_0],\frac{\delta}{2}))\cap (\pi_s^{\perp})^{-1}\left(B(\zeta, \delta)\right)\right)\\
	&=&\mathbbm{E}\left[\mathbbm{E}\left[\mathds{1}_{m_s^{-1}(B(m_s[\eta_0],\frac{\delta}{2}))}\mathds{1}_{(\pi_s^{\perp})^{-1}\left(B(\zeta, \delta)\right)}|\mathcal{F}_s^o\right]\right]\\
	&=& \mathbbm{E}\left[\mathbbm{E}\left[\mathds{1}_{(\pi_s^{\perp})^{-1}\left(B(\zeta, \delta)\right)}\middle|\mathcal{F}_s^o\right]\mathds{1}_{m_s^{-1}(B(m_s[\eta_0],\frac{\delta}{2}))}\right]\\
	&=& \mathbbm{E}\left[\mathbbm{P}^{s,\cdot}\left((\pi_s^{\perp})^{-1}\left(B(\zeta, \delta)\right)\right)\mathds{1}_{m_s^{-1}(B(m_s[\eta_0],\frac{\delta}{2}))}\right]\\
	&=& 0,
	\end{array}
	\end{equation}
	where the second equality holds (taking into account Definition
        \ref{extension}), because $m_s$ is $\mathcal{F}_0^s$-measurable,
        and the last equality holds by \eqref{EqOpen2}.
	
	So the non-empty open set $m_s^{-1}(B(m_s[\eta_0],\frac{\delta}{2}))\cap \pi_s^{\perp})^{-1}\left(B(\zeta, \delta)\right)$ is of $0$ $\mathbbm{P}$-measure, which is in contradiction with the fact that $\mathbbm{P}$ is of full support as assumed in Hypothesis \ref{HypGaussProba}.
\end{prooff}
\begin{prooff}. of Proposition \ref{Prog}.
	
	We will proceed showing that for every bounded r.v. $Z$, 
	\begin{equation}\label{Eq8}
	(s,\eta)\longmapsto \mathbbm{E}^{s,\eta}[Z]\quad \text{ is }\mathbbm{F}^o\text{-progressively measurable.}
	\end{equation}
	
	We will make use of the functional monotone class Theorem.
	Let $\mathcal{C}$ be the set of r.v. of the type $e^{i\underset{j}{\sum}\lambda_jX^{i_j}_{t_j}},$ where
	$\lambda_j,j\leq n$
	are real numbers and 
$i_j,j\leq n$ belong to $\{1,\cdots,d\}$, 
	then  $\mathcal{C}$ is stable by product and
	generates the $\sigma$-algebra $\mathcal{F}$. Since the set of bounded r.v.
	$Z$ 
	verifying \eqref{Eq8} contains all constants and
	is closed by uniform convergence and by monotone pointwise convergence, then
	by the functional monotone class Theorem (see Theorem 21 in \cite{dellmeyer75} which easily extends to complex valued r.v.) it is enough to show that \eqref{Eq8} holds for all $Z\in\mathcal{C}$.
	
	We fix $n\in\mathbbm{N}$, $i_1,\cdots,i_n\in\{1,\cdots,d\}$ and  $t_1,\cdots,t_n\in[0,T]$.
	%
	For any $(s,\eta)$, $(X^{i_1}_{t_1},\cdots,X^{i_n}_{t_n})$ is under $\mathbbm{P}^{s,\eta}$ a Gaussian vector whose  mean is
	$$ \mu_{s,\eta}:=(m_s[\eta]^{i_1}(t_1),\cdots,m_s[\eta]^{i_n}(t_n))$$ and
its covariance matrix $\Sigma_s$, where its coefficient 
$\Sigma_s(k,j)$ is  equal to the $(i_k,i_j)$-th coefficient of $c^s(t_i,t_j)$. Therefore for all $\lambda\in\mathbbm{R}^n$ we have 
	\begin{equation}\label{EqFourier}
	\mathbbm{E}^{s,\eta}\left[e^{i\underset{j}{\sum}\lambda_jX^{i_j}_{t_j}}\right]=e^{i(\lambda,\mu_{s,\eta})-\frac{1}{2}(\lambda,\Sigma_s\lambda)}.
	\end{equation}
	By Corollary \ref{Coro_m}, $m_{\cdot}[\cdot](t)$ is $\mathbbm{F}^o$-progressively measurable for all $t$,
	and by Lemma \ref{Cs}, $s\mapsto \Sigma_s$ is a (deterministic) 
	continuous function. Therefore taking \eqref{EqFourier} into account, for all $\lambda\in\mathbbm{R}^n$, 
	$(s,\eta)\longmapsto \mathbbm{E}^{s,\eta}\left[e^{i\underset{j}{\sum}\lambda_jX^{i_j}_{t_j}}\right]$ is $\mathbbm{F}^o$-progressively measurable, meaning that \eqref{Eq8} holds with $Z\in\mathcal{C}$
	%
	%
	and the proof is complete.

\end{prooff}

Before the proof of  Proposition \ref{FiltRichtCont}, we need a few technical lemmas.
\begin{lemma}\label{Isometry}

Let $s \in [0,T]$.
	For any $n\in\mathbbm{N}^*$ and $t_1,\cdots,t_n\in [s,T]$,
 the joint law of $(\int_s^{t_j}k_i(t_j,r)dB_r)_{j\leq n;i\leq d}$ under $\mathbbm{P}$ is equal to the joint law of $(X^i_{t_j})_{j\leq n;i\leq d}$ under $\mathbbm{P}^s$.
\end{lemma}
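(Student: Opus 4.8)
The plan is to observe that both random vectors appearing in the statement are centered Gaussian, so that it is enough to check that their covariance matrices coincide.

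First I would argue that $\left(\int_s^{t_j}k_i(t_j,r)dB_r\right)_{j\leq n;i\leq d}$ is a centered Gaussian vector under $\mathbbm{P}$. Indeed, by Definition \ref{DefBasic} and item 3. of Hypothesis \ref{HypGaussProba}, for each $t_j\in[s,T]$ the function $k(t_j,\cdot)$ belongs to $L^2([0,t_j])$ and vanishes on $]t_j,T]$, hence so does each of its rows; therefore every coordinate $\int_s^{t_j}k_i(t_j,r)dB_r=\underset{m\leq d}{\sum}\int_s^{t_j}k_{i,m}(t_j,r)dB^m_r$ is a well-defined Wiener integral of a deterministic $L^2$-function against the $d$-dimensional Brownian motion $B$, hence is centered Gaussian, and the whole finite family, lying in the first Wiener chaos generated by $B$, is jointly Gaussian. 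On the other side, by Notation \ref{Ps} and Remark \ref{RPs}, under $\mathbbm{P}^s$ the canonical process $X$ is a mean-zero continuous Gaussian process with covariance function $c^s$, so that $(X^i_{t_j})_{j\leq n;i\leq d}$ is likewise a centered Gaussian vector.

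It then remains to match the two covariance matrices. For the first vector, It\^o's isometry for the $d$-dimensional Brownian motion $B$ (the cross brackets $\langle B^m,B^{m'}\rangle$, $m\neq m'$, vanishing) yields, for $j,l\leq n$ and $i,i'\leq d$,
\[
\mathbbm{E}\left[\left(\int_s^{t_j}k_i(t_j,r)dB_r\right)\left(\int_s^{t_l}k_{i'}(t_l,r)dB_r\right)\right]=\int_s^{T}\underset{m\leq d}{\sum}k_{i,m}(t_j,r)k_{i',m}(t_l,r)dr,
\]
and since $k(t_j,\cdot)$ vanishes on $]t_j,T]$ and $k(t_l,\cdot)$ on $]t_l,T]$ the integrand is supported on $[s,t_j\wedge t_l]$. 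For the second vector, since $t_j,t_l\in[s,T]$, Lemma \ref{Cs} gives the value of $c^s(t_j,t_l)$, whose $(i,i')$-entry is exactly $\int_s^{t_j\wedge t_l}\underset{m\leq d}{\sum}k_{i,m}(t_j,r)k_{i',m}(t_l,r)dr=\mathbbm{E}^s[X^i_{t_j}X^{i'}_{t_l}]$. The two covariance matrices therefore agree, and since both vectors are centered Gaussian, the two joint laws coincide, which is the claim.

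There is no serious obstacle here; the only care needed is the bookkeeping of the matrix and vector indices — distinguishing the column-type notation $k_i$ from the scalar entries $k_{i,m}$, and reading off the $(i,i')$-entry of the matrix-valued $c^s$ in Lemma \ref{Cs} — together with the standing conventions on the support of $k$ in its two arguments.
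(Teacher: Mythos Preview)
Your proof is correct and follows essentially the same approach as the paper: both argue that the two vectors are centered Gaussian and then match their covariance matrices, invoking It\^o's isometry on one side and Lemma \ref{Cs} on the other. Your version is simply more explicit about why the Wiener integrals form a centered Gaussian family and about the index bookkeeping.
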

\begin{proof}
	Since both laws relate to mean-zero Gaussian vectors, it is enough
 to check that the covariance matrices are the same. We pick some $i_1,i_2\leq d$ and $j_1,j_2\leq n$ and through Definition \ref{DefBasic}, Lemma \ref{Cs}, the following 
calculations hold: 
	\begin{equation}
	\begin{array}{rcl}
	\mathbbm{E}^s[X^{i_1}_{t_{j_1}}X^{i_2}_{t_{j_2}}]&=& c^s_{i_1,i_2}(t_{j_1},t_{j_2})\\
	&=&\int_s^{t_{j_1}\wedge t_{j_2}}k(t_{j_1},r)k(t_{j_2},r)dr\\
	&=&\mathbbm{E}\left[\int_s^{t_{j_1}}k(t_{j_1},r)dB_r \int_s^{ t_{j_2}}k(t_{j_2},r)dB_r\right],
	\end{array}
	\end{equation}
	which concludes the proof.
\end{proof}

\begin{corollary}\label{LinearIndep}
	
	For all $s\in[0,T]$, $1 \le i \le d$, every finite subfamily of\\
	$\left\{\int_s^tk_i(t,r)dB_r|i\leq d; t \in[0,T]\right\}$
	is  linearly independent in $L^2(\mathbbm{P})$, where for
	all $i\leq d$,
	$k_i$ denotes the $i$-th raw of $k$.
\end{corollary}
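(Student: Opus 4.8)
The plan is to transfer the statement, via Lemma~\ref{Isometry}, to a linear‑independence statement about the coordinate process under $\mathbbm{P}^s$, and then to deduce it from the full support of $\mathbbm{P}^s$ established in Proposition~\ref{FullSupp}. First I would fix $s\in[0,T]$ and a finite subfamily of $\{\int_s^tk_i(t,r)dB_r\,|\,i\le d;\,t\in[0,T]\}$, i.e.\ finitely many distinct pairs $(i_1,t_1),\dots,(i_n,t_n)\in\{1,\dots,d\}\times[0,T]$. Since $k(t,\cdot)$ vanishes on $]t,T]$, the random variable $\int_s^tk_i(t,r)dB_r$ is identically $0$ whenever $t\le s$, so such a pair cannot occur in a linearly independent subfamily and we may assume $t_j\in\,]s,T]$ for every $j$. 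Applying Lemma~\ref{Isometry} to the distinct times among the $t_j$ shows that the centered Gaussian vector $\big(\int_s^{t_j}k_{i_j}(t_j,r)dB_r\big)_{j\le n}$ has, under $\mathbbm{P}$, the same covariance matrix as $\big(X^{i_j}_{t_j}\big)_{j\le n}$ under $\mathbbm{P}^s$. Since for a centered Gaussian vector $V$ and a vector $\lambda$ one has $\lambda\cdot V=0$ a.s.\ iff $\lambda^{\intercal}\mathrm{Cov}(V)\lambda=0$, the family $\big(\int_s^{t_j}k_{i_j}(t_j,r)dB_r\big)_{j}$ is linearly independent in $L^2(\mathbbm{P})$ precisely when $\big(X^{i_j}_{t_j}\big)_{j}$ is linearly independent in $L^2(\mathbbm{P}^s)$.

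To prove the latter, recall from Notation~\ref{Ps}, Definition~\ref{extension} and the linearity of $m_s$ that $\mathbbm{P}^s=\mathbbm{P}^{s,0}$, whose topological support equals $\Omega_s^{\perp}$ by Proposition~\ref{FullSupp}. Suppose $\sum_j\lambda_jX^{i_j}_{t_j}=0$ $\mathbbm{P}^s$-a.s. The map $\omega\mapsto\sum_j\lambda_jX^{i_j}_{t_j}(\omega)$ is a continuous linear functional on $\Omega$, so its zero set is closed; having full $\mathbbm{P}^s$-measure, it must contain the support $\Omega_s^{\perp}$. Hence $\sum_j\lambda_jX^{i_j}_{t_j}(\omega)=0$ for every continuous $\omega:[0,T]\to\mathbbm{R}^d$ that vanishes on $[0,s]$. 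Fixing $m$, I would then exhibit such an $\omega$ with $X^{i_j}_{t_j}(\omega)=\delta_{jm}$: for each coordinate $c\le d$ the times $\{t_j:i_j=c\}$ are distinct points of $]s,T]$, so there is a continuous (say piecewise linear) scalar function on $[0,T]$ vanishing on $[0,s]$ and taking the prescribed values at these points; assembling the $d$ coordinates gives $\omega\in\Omega_s^{\perp}$. Evaluating the relation at this $\omega$ yields $\lambda_m=0$, and since $m$ is arbitrary all $\lambda_j$ vanish.

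The argument is essentially routine once this reduction is in place. One could instead try to stay in $L^2(\mathbbm{P})$ and apply the It\^o isometry, reducing the claim to the linear independence of the $\mathbbm{R}^d$-valued functions $\mathbbm{1}_{[s,t_j]}\,k_{i_j}(t_j,\cdot)$ in $L^2([s,T];\mathbbm{R}^d)$; proving \emph{that} directly would seem to require a non‑degeneracy property of the kernel $k$ that is not transparently among the standing hypotheses, so passing through $\mathbbm{P}^s$ and its full support is the more economical route. The only remaining points needing care are minor: the identification $\mathbbm{P}^s=\mathbbm{P}^{s,0}$ (so that Proposition~\ref{FullSupp} applies), the harmless discarding of the degenerate pairs with $t_j\le s$, and the elementary construction of the separating paths — so I do not anticipate a genuine obstacle.
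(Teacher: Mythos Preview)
Your proof is correct and follows essentially the same route as the paper: both transfer the question to $\mathbbm{P}^s$ via Lemma~\ref{Isometry} and then invoke the full-support statement of Proposition~\ref{FullSupp}. The paper's proof is terser---it argues that $l_s^{-1}(\mathbbm{R}^*)$ is a non-empty open null set, contradicting full support---whereas you make the non-triviality of $l_s$ explicit by constructing separating paths in $\Omega_s^{\perp}$; your version also spells out the identification $\mathbbm{P}^s=\mathbbm{P}^{s,0}$ and the harmless removal of the degenerate pairs $t_j\le s$, both of which the paper leaves implicit.
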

\begin{proof}
	
	We assume that $\underset{j\leq n,i\leq d}{\sum}\lambda_{j,i} \int_s^{t_j}k_{i}(t_j,r)
	dB_r = 0$ $\mathbbm{P}$ a.s. where $n\in\mathbbm{N}^*; \lambda_{i,j}\in\mathbbm{R}\,,j\leq n,i\leq d$; $t_1<\cdots< t_n\in[s,T]$. 
	
	By previous Lemma \ref{Isometry}, this is equivalent to having 
	\begin{equation}\label{EqLinInd}
 l:=	\underset{j\leq n,i\leq d}{\sum}\lambda_{j,i} X^i_{t_j} = 0\quad \mathbbm{P}^s\text{ a.s. }
	\end{equation}

	 We denote by  $l_s$ 
the restriction of $l$ in \eqref{EqLinInd} to  $\Omega_s^{\perp}$. 
	By \eqref{EqLinInd},  the non-empty open set ${l_s}^{-1}(\mathbbm{R}^*)$ is of zero $\mathbbm{P}^s$-measure, which is in contradiction with the fact that $supp(\mathbbm{P}^s)=\Omega_s^{\perp}$ thanks to Proposition \ref{FullSupp}.


	%
\end{proof}

\begin{lemma}\label{LemFilt}
	For $s\in[0,T]$ we recall that the spaces  $H^s(\mathbbm{P})$ and $H(\mathbbm{P})$ were introduced in Definition \ref{DefBasic}.
	Let $H^s(\mathbbm{P})^{\perp}$ us denote the orthogonal of $H^s(\mathbbm{P})$ in $H(\mathbbm{P})$. 
	There is an isometry  $\Phi^s$ mapping 
	$H^s(\mathbbm{P})^{\perp}$ onto $ H(\mathbbm{P}^s)$
	such that, for any $ s$, we have the following.
	\begin{enumerate}
		\item For $t \ge s $,
		$\Phi^s(H^s(\mathbbm{P})^{\perp}\cap H^t(\mathbbm{P}))=H^t(\mathbbm{P}^s)$.
		\item
		Let $B$ be the Brownian motion whose existence is assumed in item 3. of Hypothesis \ref{HypGaussProba}.
		We have 
		\begin{equation} \label{ESurj}
		\Phi^s\left(\int_s^tk_i(t,r)dB_r\right)=X^i_t, t\geq s, i\leq d.
		\end{equation}
\item		
$\Phi^s$ is the unique isometry fulfilling \eqref{ESurj}.
	\end{enumerate}
\end{lemma}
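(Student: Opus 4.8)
The plan is to realize $\Phi^s$ explicitly on a dense subspace and then extend it by continuity. I first record the consequences of Hypothesis \ref{HypGaussProba} for the spaces involved. By Remark \ref{RemHypGaussProba} (that is, Theorem 1.7 in \cite{HidaCanRep}), $H^u(\mathbbm{P})=\overline{Span\{B^i_r : i\leq d,\ r\in[0,u]\}}^{L^2(\mathbbm{P})}$ for every $u\in[0,T]$; in particular $H(\mathbbm{P})=H^T(\mathbbm{P})$ is the Gaussian space of Wiener integrals with (vector) kernels in $L^2([0,T],\mathbbm{R}^d)$, the subspace $H^s(\mathbbm{P})$ corresponds to kernels supported in $[0,s]$, and hence $H^s(\mathbbm{P})^{\perp}$ corresponds to kernels vanishing Lebesgue-a.e.\ on $[0,s]$. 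On the target side, since $\mathbbm{P}^s$ is the law of $X-m_s[\eta]$, which vanishes on $[0,s]$, we have $X^i_r=0$ $\mathbbm{P}^s$-a.s.\ for $r<s$, so by definition $H(\mathbbm{P}^s)=\overline{Span\{X^i_r : i\leq d,\ r\in[s,T]\}}^{L^2(\mathbbm{P}^s)}$, and similarly $H^t(\mathbbm{P}^s)=\overline{Span\{X^i_r : i\leq d,\ r\in[s,t]\}}^{L^2(\mathbbm{P}^s)}$ for $t\geq s$.

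Next I would set $\Phi^s\big(\int_s^t k_i(t,r)\,dB_r\big):=X^i_t$ on $Span\{\int_s^t k_i(t,r)\,dB_r : i\leq d,\ t\in[s,T]\}$ and extend by linearity, so that \eqref{ESurj} holds by construction. Lemma \ref{Isometry} says that for every finite family $t_1,\dots,t_n\in[s,T]$ the joint law under $\mathbbm{P}$ of $(\int_s^{t_j}k_i(t_j,r)\,dB_r)_{i,j}$ equals the joint law under $\mathbbm{P}^s$ of $(X^i_{t_j})_{i,j}$; as all these variables are centred, comparing the variance of an arbitrary linear combination on the two sides shows at once that $\Phi^s$ is well defined (a combination that is $\mathbbm{P}$-a.s.\ zero has zero variance, hence the matching combination is $\mathbbm{P}^s$-a.s.\ zero) and is $L^2$-norm preserving; thus $\Phi^s$ is an isometry from that span onto $Span\{X^i_t : i\leq d,\ t\in[s,T]\}\subset H(\mathbbm{P}^s)$, whose range is dense in $H(\mathbbm{P}^s)$ by the previous paragraph. (Corollary \ref{LinearIndep} shows, incidentally, that the generating family is linearly free.)

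The heart of the proof — and the step I expect to be the main obstacle — is to show that $Span\{\int_s^t k_i(t,r)\,dB_r : i\leq d,\ t\in[s,T]\}$ is dense in $H^s(\mathbbm{P})^{\perp}$; granting this, $\Phi^s$ extends uniquely to an isometry defined on all of $H^s(\mathbbm{P})^{\perp}$, and since an isometry has complete, hence closed, range, and that range contains the dense set $Span\{X^i_t : t\in[s,T]\}$, the extension is automatically onto $H(\mathbbm{P}^s)$. The inclusion ``$\subseteq H^s(\mathbbm{P})^{\perp}$'' is It\^o's isometry: the kernel of $\int_s^t k_i(t,r)\,dB_r$ is supported in $[s,t]$, hence orthogonal to every element of $H^s(\mathbbm{P})$. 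For the reverse inclusion, let $Y\in H^s(\mathbbm{P})^{\perp}$ be orthogonal to every $\int_s^t k_i(t,r)\,dB_r$; writing $Y$ as a Wiener integral with kernel $g\in L^2([0,T],\mathbbm{R}^d)$ that vanishes a.e.\ on $[0,s]$, It\^o's isometry turns the orthogonality relations into $\int_0^t k(t,r)g(r)\,dr=0$ for every $t\in[0,T]$ (the identity being trivial for $t<s$), whence Hypothesis \ref{HypGaussProba} item 4, applied with parameter $t=T$, forces $g=0$ a.e., i.e.\ $Y=0$.

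It remains to check items 1 and 3. For item 1, fix $t\geq s$ as in the statement and run the density computation of the previous paragraph with the upper limits of the integrals restricted to $[s,t]$: It\^o's isometry (the kernel of $\int_s^u k_i(u,r)\,dB_r$ is supported in $[s,u]\subseteq[0,t]$) gives ``$\subseteq H^s(\mathbbm{P})^{\perp}\cap H^t(\mathbbm{P})$'', while Hypothesis \ref{HypGaussProba} item 4 with parameter $t$ gives the reverse inclusion, so that $\overline{Span\{\int_s^u k_i(u,r)\,dB_r : i\leq d,\ u\in[s,t]\}}^{L^2(\mathbbm{P})}=H^s(\mathbbm{P})^{\perp}\cap H^t(\mathbbm{P})$; applying the isometry $\Phi^s$ sends this onto $\overline{Span\{X^i_u : i\leq d,\ u\in[s,t]\}}^{L^2(\mathbbm{P}^s)}=H^t(\mathbbm{P}^s)$. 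Item 3 is then immediate: $Span\{\int_s^t k_i(t,r)\,dB_r : i\leq d,\ t\in[s,T]\}$ is dense in $H^s(\mathbbm{P})^{\perp}$ and an isometry is continuous, so \eqref{ESurj} already determines $\Phi^s$ uniquely on $H^s(\mathbbm{P})^{\perp}$.
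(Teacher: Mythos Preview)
Your proof is correct and follows the same overall architecture as the paper's: define $\Phi^s$ on $V:=Span\{\int_s^t k_i(t,r)\,dB_r\}$ via \eqref{ESurj}, use Lemma \ref{Isometry} to see that it preserves the inner product, then extend by continuity once $V$ is known to be dense in $H^s(\mathbbm{P})^\perp$.

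The one genuine difference is in the density step. The paper obtains $\overline{V}=H^s(\mathbbm{P})^\perp$ (and likewise $\overline{V_t}=H^s(\mathbbm{P})^\perp\cap H^t(\mathbbm{P})$) by pure Hilbert-space geometry: it writes $X^i_t=\int_0^s k_i(t,r)\,dB_r+\int_s^t k_i(t,r)\,dB_r$, identifies the second summand as $p_s^\perp(X^i_t)$, and uses that $p_s^\perp(H(\mathbbm{P}))=H^s(\mathbbm{P})^\perp$ together with continuity of $p_s^\perp$. You instead pass through the Wiener-integral parametrization of $H^s(\mathbbm{P})^\perp$ (kernels supported on $[s,T]$) and invoke item 4 of Hypothesis \ref{HypGaussProba} directly to kill an orthogonal element. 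Both routes are valid; the paper's projection argument is slightly more self-contained (item 4 enters only through Remark \ref{RemHypGaussProba}), while yours makes the role of the kernel injectivity more explicit and, as a bonus, lets you dispense with Corollary \ref{LinearIndep} for well-definedness, since equality of laws (Lemma \ref{Isometry}) already forces $\Phi^s$ to be consistent on linear combinations.
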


\begin{proof}
	We fix $s$ and denote by
	$p_s^{\perp}$ the orthogonal projection on $H^s(\mathbbm{P})^{\perp}$ from the 
	space $H(\mathbbm{P})$.
	
	For all $t\geq s$, $1 \le i\leq d$, under $\mathbbm{P}$ we have $X^i_t=\int_0^sk_i(t,r)dB_r+\int_s^tk_i(t,r)dB_r$,
	where $\int_0^sk_i(t,r)dB_r\in H^s(\mathbbm{P})$ and $\int_s^tk_i(t,r)dB_r$ is orthogonal to\\
	$\overline{Span(\{B^i_r|1 \le i\leq d;r\in[0,s]\})}^{L^2(\mathbbm{P})}$ (which by Remark \ref{RemHypGaussProba} is equal to $H^s(\mathbbm{P})$),
 hence belongs to $H^s(\mathbbm{P})^{\perp}$. So $p_s^{\perp}(X^i_t)=\int_s^tk_i(t,r)dB_r$. 
	
	Since $H(\mathbbm{P})=\overline{Span(\{X^i_r|i\leq d;r\in[0,T]\})}^{L^2(\mathbbm{P})}$ and
	$H^s(\mathbbm{P})^{\perp}=p_s^{\perp}(H(\mathbbm{P}))$ 
	then by continuity of $p_s^{\perp}$, 
	$$ 
	H^s(\mathbbm{P})^{\perp}=\overline{Span(\{p_s^{\perp}(X^i_r)|i\leq d;r\in[0,T]\})}^{L^2(\mathbbm{P})} = \overline V^{L^2(\mathbbm{P})},
	$$
	where 
	$$ V = Span\left\{\int_s^tk_i(t,r)dB_r|i\leq d;
	t \in[0,T]\right\}.$$
	We start by defining $\Phi^s$ on $V$.
	First we fix  $\Phi^s(\int_s^tk_i(t,r)dB_r)$
	conformally to \eqref{ESurj}.
	Since by Corollary \ref{LinearIndep}, $\left\{\int_s^tk_i(t,r)dB_r|i\leq d; t \in[0,T]\right\}$ is linearly independent, then we can extend $\Phi^s$  by linearity to the rest of $V$ without ambiguity.
	For all $t,u\geq s$, $i,j\leq d$, a consequence of Lemma \ref{Isometry} is that
	
	$$\mathbbm{E} \left[\int_s^tk_i(t,r)dB_r\int_s^uk_j(u,r)dB_r\right]=\mathbbm{E}^{s}[X^i_tX^j_u].$$
	This implies that $\Phi^s$ preserves the scalar product, and therefore that it is an isometry from 
	$V$ onto $Span(\{X^i_r|i\leq d;t\in[s,T]\})$, as a subset of $H(\mathbbm{P}^s)$. 
	
	By the extension theorem we can (uniquely) extend $\Phi^s$ to a continuous mapping from $H^s(\mathbbm{P})^{\perp}=\overline{Span(\{\int_s^tk_i(t,r)dB_r|i\leq d;t\in[s,T]\})}^{L^2(\mathbbm{P})}$ to 
	$$ H(\mathbbm{P}^s)=\overline{Span(\{X^i_r|i\leq d;t\in[s,T]\})}^{L^2(\mathbbm{P})}.$$ By continuity of the scalar product, $\Phi^s$ still preserves the scalar product, therefore the norm, and therefore is still injective. 
	The surjectivity follows by density of $Span(\{X^i_t|i\leq d;t\in[s,T]\})$ in $H(\mathbbm{P}^s)$ and \eqref{ESurj}.
	
The proof of items 2. are 3. is contained in previous constructing 
considerations.  
	It remains to show item 1. 
	We fix $t\in[s,T]$. We can argue as above
	and say that $H^t(\mathbbm{P})=\overline{Span(\{X^i_u|i\leq d;u\in[0,t]\})}^{L^2(\mathbbm{P})}$ and therefore that 
	$H^s(\mathbbm{P})^{\perp}\cap H^t(\mathbbm{P})$ is the closure of $Span(\{p_s^{\perp}(X^i_u)|i\leq d;u\in[0,t]\})=Span(\{\int_s^tk_i(u,r)dB_r|i\leq d;u\in[s,t]\})$ whose elements are mapped in $H^t(\mathbbm{P}^s)$ by $\Phi^s$  since $\Phi^s(\int_s^tk_i(u,r)dB_r)=X^i_u\in H^t(\mathbbm{P}^s)$ for all $u\in[s,t]$, $i\leq d$. $H^t(\mathbbm{P}^s)$ is closed and $\Phi^s$ is continuous, so $\Phi^s$ also maps 
	$$\overline{Span(\{\int_s^tk_i(u,r)dB_r|i\leq d;u\in[s,t]\})}^{L^2(\mathbbm{P})}=H^s(\mathbbm{P})^{\perp}\cap H^t(\mathbbm{P}),$$ into $H^t(\mathbbm{P}^s)$. Conversely, $(\Phi^s)^{-1}$ maps $X^i_u$ to $\int_s^tk_i(u,r)dB_r\in H^s(\mathbbm{P})^{\perp}\cap H^t(\mathbbm{P})$ for all $u\in[s,t]$, $i\leq d$. By continuity of $(\Phi^s)^{-1}$ (the inverse of an isometry being an isometry) and since $H^s(\mathbbm{P})^{\perp}\cap H^t(\mathbbm{P})$ is closed, $(\Phi^s)^{-1}$ also maps $\overline{Span(\{X^i_u|i\leq d;u\in[s,t]\})}^{L^2(\mathbbm{P})}=H^t(\mathbbm{P}^s)$ into $H^s(\mathbbm{P})^{\perp}\cap H^t(\mathbbm{P})$. This proves item 1.

\end{proof}

 In the proof of Proposition \ref{FiltRichtCont}, we will denote by $\mathcal{F}^{o,s,\eta}_t$, the $\sigma$-field 
$\mathcal{F}^o_t$ augmented with $\mathbbm{P}^{s,\eta}$-null sets.
\begin{prooff}. of Proposition \ref{FiltRichtCont}.
	
	By Notation \ref{Ps} we recall that under $\mathbbm{P}^{s,\eta}$, the law of
	$X^{s,\eta}=X-m_s[\eta]$ is $\mathbbm{P}^s$,
	where $m_s[\eta]$ is the deterministic function,
	introduced in Proposition \ref{CondExp} and Notation \ref{extension}.
	Consequently 
	$H(\mathbbm{P}^{s,\eta})=H(\mathbbm{P}^s)$.
	By Lemma \ref{LemFilt} that space is isometric to $H^s(\mathbbm{P})^{\perp}$. 
	We define 
	\begin{equation} \label{ETseta}
	T^{s,\eta}:\begin{array}{ccl}
	H(\mathbbm{P}^{s})&\longrightarrow& H(\mathbbm{P}^{s,\eta})\\
	Y&\longmapsto& Y-\mathbbm{E}^{s,\eta}[Y],
	\end{array} 
	\end{equation}
	which is clearly an isometry and maps $X^i_t$ to $X^{i,s,\eta}_t$ for all $t\geq s$, $i\leq d$, hence it is easy to show
	\begin{equation}\label{Tseta}
	T^{s,\eta}(H^t(\mathbbm{P}^{s}))=H^t(\mathbbm{P}^{s,\eta})\quad\text{ for all }t.
	\end{equation}
	We also define
	\begin{equation}\label{EPhiseta}
	\Phi^{s,\eta}:=T^{s,\eta}\circ\Phi_s:H^s(\mathbbm{P})^{\perp}\longrightarrow H(\mathbbm{P}^{s,\eta}),
	\end{equation}
	which is an isometry mapping $\int_s^tk_i(u,r)dB_r$ to $X^{i,s,\eta}_t$ for all $t\geq s$, $i\leq d$. Combining item 1. of Lemma \ref{LemFilt} and \eqref{Tseta}, it is clear that
	\begin{equation}\label{EPhiseta2}
	\Phi^{s,\eta}(H^s(\mathbbm{P})^{\perp}\cap H^t(\mathbbm{P}))=H^t(\mathbbm{P}^{s,\eta})\quad\text{ for all }t\geq s.
	\end{equation}
	
	By Remark \ref{RemHypGaussProba} item 2., $H(\mathbbm{P})$ contains all the
	r.v. related to $B$. Moreover  $B^j_t-B^j_s$ is for all $j\leq d$ and $t\geq s$ orthogonal to $\overline{Span(\{B^i_r|i\leq d;r\in[0,s]\})}^{L^2(\mathbbm{P})}$
	which is equal to $H^s(\mathbbm{P})$ again by Remark \ref{RemHypGaussProba} 2.
	
	So for all $t\geq s$, $i\leq d$, 
	\begin{equation} \label{EOrth}
	B^i_t-B^i_s\in H^s(\mathbbm{P})^{\perp}\cap H^t(\mathbbm{P})
	\end{equation}
	and 
	we denote $B^{i,s,\eta}_t:= \Phi^{s,\eta}(B^i_t-B^i_s)$ which by \eqref{EPhiseta2} verifies
	\begin{equation}\label{EOrth1}
	B^{i,s,\eta}_t\in 
	H^t(\mathbbm{P}^{s,\eta}).
	\end{equation}
	We also denote $B^{s,\eta}:=(B^{1,s,\eta},\cdots,B^{d,s,\eta})$. 
	$H(\mathbbm{P}^{s,\eta})$ is a Gaussian space 
	of mean-zero r.v. so clearly, $(B^{s,\eta}_t)_{t\in[s,T]}$ is a mean-zero Gaussian process. Taking into account the isometry $(\Phi^{s,\eta})^{-1}$, see
considerations after \eqref{EOrth},
 for all $s\leq r,t,u,v\leq T$, $i,j\leq d$, we have
	\begin{equation}
	\begin{array}{rcl}
	\mathbbm{E}^{s,\eta}[(B^{i,s,\eta}_v-B^{i,s,\eta}_u)(B^{j,s,\eta}_r-B^{j,s,\eta}_t)]&=&\langle B^{i,s,\eta}_v-B^{i,s,\eta}_u,B^{j,s,\eta}_r-B^{j,s,\eta}_t\rangle_{H(\mathbbm{P}^{s,\eta})}\\
	&=&\langle B^i_v-B^i_u,B^j_r-B^j_t\rangle_{H(\mathbbm{P})}\\
	&=&\mathbbm{E}[(B^i_v-B^i_u)(B^j_r-B^j_t)].
	\end{array}
	\end{equation}
	In particular, under $ \mathbbm{P}^{s,\eta}$, $(B^{s,\eta}_t)_{t\in[s,T]}$ has independent increments, and for all $s\leq t\leq u$, $B^{s,\eta}_u-B^{s,\eta}_t$ has variance $u-t$.
	
	By Kolmogorov's Theorem (see Theorem 2.8 and Problem 2.10 in \cite{ks}) $(B^{s,\eta}_t)_{t\in[s,T]}$ admits a  $\mathbbm{P}^{s,\eta}$ continuous version which is a  Brownian motion starting in $s$ and which we still denote $(B^{s,\eta}_t)_{t\in[s,T]}$. 
We prove below that
 this Brownian motion is adapted to $(\mathcal{F}^{o,s,\eta}_t)_{t\in[s,T]}$,
	that filtration being introduced just 
after the statement of the
	present Proposition \ref{FiltRichtCont}. 
Indeed, for all $t$, $i\leq d$, we have
	by \eqref{EOrth1} that
	$B^{i,s,\eta}_t\in H^t(\mathbbm{P}^{s,\eta})$. So $B^{i,s,\eta}_t$ is the $L^2(\mathbbm{P}^{s,\eta})$ limit of linear combinations of values of $X$ at times prior to $t$, hence it is $\mathcal{F}^{o,s,\eta}_t$-measurable as the $L^2(\mathbbm{P}^{s,\eta})$ limit of $\mathcal{F}^{o,s,\eta}_t$-measurable r.v.
	
	For all $i\leq d$, $t\geq s$,  $\int_s^t k_i(t,r)dB^{s,\eta}_r$ is, by linearity and continuity of $\Phi^{s,\eta}$ and by construction of the
	Wiener integral, equal to
	$\Phi^{s,\eta}(\int_s^tk_i(t,r)dB_r)$. Moreover we also have
 $\Phi^{s,\eta}(\int_s^tk_i(t,r)dB_r)= X^{i,s,\eta}_t$, see the lines after \eqref{EPhiseta}.  
	In particular $\int_s^t k_i(t,r)dB^{s,\eta}_r=X^{i,s,\eta}_t$ holds  in $L^2(\mathbbm{P}^{s,\eta})$ and the first statement of Proposition \ref{FiltRichtCont} is proved.
	
	In particular, $(X_t)_{t\geq s}$ is adapted to the filtration generated  
	by $(B^{s,\eta}_t)_{t\in[s,T]}$
	augmented with $\mathbbm{P}^{s,\eta}$-null sets.  Since $(B^{s,\eta}_t)_{t\in[s,T]}$ has been shown to be \\ $(\mathcal{F}^{o,s,\eta}_t)_{t\in[s,T]}$-adapted, then $(\mathcal{F}^{o,s,\eta}_t)_{t\in[s,T]}$ is equal to the filtration generated by $(B^{s,\eta}_t)_{t\in[s,T]}$ augmented with $\mathbbm{P}^{s,\eta}$-null sets, which is right-continuous since $(B^{s,\eta}_t)_{t\in[s,T]}$ is a Brownian Motion, see Proposition 7.7 in \cite{ks}.
	
	$(\mathcal{F}^{o,s,\eta}_t)_{t\in[s,T]}$ is therefore right-continuous. 
	In particular, for all $t\geq s$,
	$$ \mathcal{F}^{s,\eta}_t=\underset{\epsilon>0}{\bigcap}\mathcal{F}^{o,s,\eta}_{t+\epsilon}=\mathcal{F}^{o,s,\eta}_t.$$ This concludes the ''moreover'' statement
of Proposition  \ref{FiltRichtCont}.
	
\end{prooff}

\end{appendix}
{\bf ACKNOWLEDGEMENTS.} The work of the second named author
was partially supported by a public grant as part of the
{\it Investissement d'avenir project, reference ANR-11-LABX-0056-LMH,
  LabEx LMH,}
in a joint call with Gaspard Monge Program for optimization,
 operations research and their interactions with data sciences.

\bibliographystyle{plain}
\bibliography{biblioPhDBarrasso}

\def\polhk#1{\setbox0=\hbox{#1}{\ooalign{\hidewidth
  \lower1.5ex\hbox{`}\hidewidth\crcr\unhbox0}}}
  \def\polhk#1{\setbox0=\hbox{#1}{\ooalign{\hidewidth
  \lower1.5ex\hbox{`}\hidewidth\crcr\unhbox0}}} \def\cprime{$'$}
  \def\polhk#1{\setbox0=\hbox{#1}{\ooalign{\hidewidth
  \lower1.5ex\hbox{`}\hidewidth\crcr\unhbox0}}}
\begin{thebibliography}{10}

\bibitem{aliprantis}
C.~D. Aliprantis and K.~C. Border.
\newblock {\em Infinite-dimensional analysis}.
\newblock Springer-Verlag, Berlin, second edition, 1999.
\newblock A hitchhiker's guide.

\bibitem{paperPathDep}
A.~Barrasso and F.~Russo.
\newblock Decoupled mild solutions of path-dependent {PDE}s and {I}ntegro
  {PDE}s represented by {BSDE}s driven by cadlag martingales.
\newblock {\em Potential Analysis. To appear}.

\bibitem{paper3}
A.~Barrasso and F.~Russo.
\newblock Martingale driven {BSDE}s, {PDE}s and other related deterministic
  problems.
\newblock 2017.
\newblock Preprint, hal-01566883.

\bibitem{paperMPv2}
A.~Barrasso and F.~Russo.
\newblock Path-dependent {M}artingale {P}roblems and {A}dditive {F}unctionals.
\newblock {\em Stochastics and Dynamics}, 19 no 1, 2019.
\newblock Preprint, hal-01775200.

\bibitem{BionNadal}
J.~Bion-Nadal.
\newblock Dynamic risk reasures and path-dependent second order {PDE}s.
\newblock {\em Stochastics of Environmental and Financial Economics},
  138:147--178, 2016.

\bibitem{bogachevGauss}
V.~I. Bogachev.
\newblock {\em Gaussian measures}, volume~62 of {\em Mathematical Surveys and
  Monographs}.
\newblock American Mathematical Society, Providence, RI, 1998.

\bibitem{cosso_russo15b}
A.~Cosso and F.~Russo.
\newblock Strong-viscosity solutions: semilinear parabolic {PDE}s and
  path-dependent {PDE}s.
\newblock {\em To appear: Osaka Journal of Mathematics. Preprint HAL-01145301},
  2015.

\bibitem{cosso_russo15a}
A.~Cosso and F.~Russo.
\newblock Functional {I}t\^o versus {B}anach space stochastic calculus and
  strict solutions of semilinear path-dependent equations.
\newblock {\em Infin. Dimens. Anal. Quantum Probab. Relat. Top.},
  19(4):1650024, 44, 2016.

\bibitem{dellmeyer75}
C.~Dellacherie and P.-A. Meyer.
\newblock {\em Probabilit\'es et potentiel}, volume~A.
\newblock Hermann, Paris, 1975.
\newblock Chapitres I {\`a} IV.

\bibitem{dellmeyerD}
C.~Dellacherie and P.-A. Meyer.
\newblock {\em Probabilit\'es et potentiel. {C}hapitres {XII}--{XVI}}.
\newblock Publications de l'Institut de Math\'ematiques de l'Universit\'e de
  Strasbourg [Publications of the Mathematical Institute of the University of
  Strasbourg], XIX. Hermann, Paris, second edition, 1987.
\newblock Th{\'e}orie des processus de Markov. [Theory of Markov processes].

\bibitem{DGR}
C.~Di~Girolami and F.~Russo.
\newblock Infinite dimensional stochastic calculus via regularization and
  applications.
\newblock {\em Preprint \textup{HAL-INRIA, inria-00473947 version 1}},
  (Unpublished), 2010.

\bibitem{dupire}
B.~Dupire.
\newblock {\em Functional {I}t\^o calculus}.
\newblock Portfolio Research Paper, Bloomberg, 2009.

\bibitem{etzI}
I.~Ekren, N.~Touzi, and J.~Zhang.
\newblock {\em Viscosity solutions of fully nonlinear parabolic path dependent
  {PDE}s: {P}art {I}}.
\newblock To appear in \emph{Annals of Probability}, 2013.

\bibitem{EthierKurz}
S.~N. Ethier and T.~G. Kurtz.
\newblock {\em Markov processes}.
\newblock Wiley Series in Probability and Mathematical Statistics: Probability
  and Mathematical Statistics. John Wiley \& Sons, Inc., New York, 1986.
\newblock Characterization and convergence.

\bibitem{flandoli_zanco13}
F.~Flandoli and G.~Zanco.
\newblock An infinite-dimensional approach to path-dependent {K}olmogorov
  equations.
\newblock {\em Ann. Probab.}, 44(4):2643--2693, 2016.

\bibitem{masiero}
M.~Fuhrman, F.~Masiero, and G.~Tessitore.
\newblock Stochastic equations with delay: optimal control via {BSDE}s and
  regular solutions of {H}amilton-{J}acobi-{B}ellman equations.
\newblock {\em SIAM J. Control Optim.}, 48(7):4624--4651, 2010.

\bibitem{HidaCanRep}
T.~Hida.
\newblock Canonical representations of {G}aussian processes and their
  applications.
\newblock {\em Mem. Coll. Sci. Univ. Kyoto. Ser. A. Math.}, 33:109--155,
  1960/1961.

\bibitem{jacod}
J.~Jacod and A.~N. Shiryaev.
\newblock {\em Limit theorems for stochastic processes}, volume 288 of {\em
  Grundlehren der Mathematischen Wissenschaften [Fundamental Principles of
  Mathematical Sciences]}.
\newblock Springer-Verlag, Berlin, second edition, 2003.

\bibitem{ks}
I.~Karatzas and S.~E. Shreve.
\newblock {\em Brownian motion and stochastic calculus}, volume 113 of {\em
  Graduate Texts in Mathematics}.
\newblock Springer-Verlag, New York, second edition, 1991.

\bibitem{lagatta}
T.~LaGatta.
\newblock Continuous disintegrations of {G}aussian processes.
\newblock {\em Theory Probab. Appl.}, 57(1):151--162, 2013.

\bibitem{leao_ohashi_simas14}
D.~Le\~{a}o, A.~Ohashi, and A.~B. Simas.
\newblock A weak version of path-dependent functional {I}t\^{o} calculus.
\newblock {\em Ann. Probab.}, 46(6):3399--3441, 2018.

\bibitem{pardoux1992backward}
{\'E}.~Pardoux and S.~Peng.
\newblock Backward stochastic differential equations and quasilinear parabolic
  partial differential equations.
\newblock In {\em Stochastic partial differential equations and their
  applications ({C}harlotte, {NC}, 1991)}, volume 176 of {\em Lecture Notes in
  Control and Inform. Sci.}, pages 200--217. Springer, Berlin, 1992.

\bibitem{Peng2016}
S.~Peng and F.~Wang.
\newblock {BSDE}, path-dependent {PDE} and nonlinear {F}eynman-{K}ac formula.
\newblock {\em Science China Mathematics}, 59(1):19--36, 2016.

\bibitem{Sottinen}
T.~Sottinen and L.~Viitasaari.
\newblock Prediction law of fractional {B}rownian motion.
\newblock {\em Statist. Probab. Lett.}, 129:155--166, 2017.

\bibitem{stroock}
D.~W. Stroock and S.~R.~S. Varadhan.
\newblock {\em Multidimensional diffusion processes}.
\newblock Classics in Mathematics. Springer-Verlag, Berlin, 2006.
\newblock Reprint of the 1997 edition.

\bibitem{ViensZh}
F.~{Viens} and J.~{Zhang}.
\newblock {A Martingale Approach for Fractional Brownian Motions and Related
  Path Dependent PDEs}.
\newblock {\em ArXiv e-prints}, December 2017.

\end{thebibliography}

\end{document}